\title{Bridgeland stability conditions on surfaces with curves of negative self-intersection.}
\author{Rebecca Tramel and Bingyu Xia}
\date{}
\theoremstyle{definiton}
\newtheorem{thm}{Theorem}[section]
\newtheorem{defn}[thm]{Definition}
\newtheorem{prop}[thm]{Proposition}
\newtheorem{lem}[thm]{Lemma}
\newcommand{\BQ}{\mathbb{Q}}
\newcommand{\BA}{\mathbb{A}}
\newcommand{\BC}{\mathbb{C}}
\newcommand{\BP}{\mathbb{P}}
\newcommand{\CO}{\mathcal{O}}
\newcommand{\BL}{\mathbb{L}}
\newcommand{\CA}{\mathcal{A}}
\newcommand{\BR}{\mathbb{R}}
\newcommand{\CB}{\mathcal{B}}
\newcommand{\CT}{\mathcal{T}}
\newcommand{\CF}{\mathcal{F}}
\newcommand{\CC}{\mathcal{C}}
\newcommand{\calD}{\mathcal{D}}
\newcommand{\CE}{\mathcal{E}}
\newcommand{\ch}{\rm ch}
\newcommand{\Coh}{\rm Coh}
\newcommand{\cx}{^{\cdot}}
\newcommand{\CDb}{\mathcal{D}^b}
\newcommand{\Ext}{{\rm Ext}}
\newcommand{\Hom}{{\rm Hom}}
\newcommand{\Imag}{{\rm Im}}
\newcommand{\Real}{{\rm Re}}
\newcommand{\Per}{^{-1}{\rm Per}}
\newcommand{\oks}{\mathcal{O}_{C}(k)[1]}
\newcommand{\okss}{\mathcal{O}_{C}(k)[2]}
\newcommand{\okp}{\mathcal{O}_{C}(k+1)}
\begin{document}

\maketitle

\begin{abstract}
Let $X$ be a smooth complex projective variety. In 2002, \cite{Bridgeland} defined a notion of stability for the objects in $\CDb(X)$, the bounded derived category of coherent sheaves on $X$, which generalized the notion of slope stability for vector bundles on curves. There are many nice connections between stability conditions on $X$ and the geometry of the variety.

We construct new stability conditions for surfaces containing a curve $C$ whose self-intersection is negative. We show that these stability conditions lie on a wall of the geometric chamber of ${\rm Stab}(X)$, the stability manifold of $X$. We then construct the moduli space $M_{\sigma}(\CO_X)$ of $\sigma$-semistable objects of class $[\CO_X]$ in $K_0(X)$ after wall-crossing.

\end{abstract}

\section{Introduction}

Let $X$ be a smooth projective surface, and $\CDb(X)$ be the bounded derived category of coherent sheaves on $X$. Following \cite{AB} we can define certain Bridgeland stability conditions on $X$ by choosing an ample class $\omega$ and another class $B$ in ${\rm NS}(X)$. In \cite{K3} and \cite{AB}, it is shown that these stability conditions lie inside a complex manifold called ${\rm Stab}(X)$, inside an open subset called the geometric chamber. All skyscraper sheaves are stable with respect to these stability conditions. 

It is a natural question to vary such a stability condition continuously within ${\rm Stab}(X)$ and determine at which points some skyscraper sheaves fail to be stable. In other words, we search for walls to the geometric chamber of ${\rm Stab}(X)$. Furthermore, if we consider $M_{\sigma}([\CO_x])$, the moduli space of $\sigma$-stable objects of class $[\CO_x]$, then inside the geometric chamber, $M_{\sigma}([\CO_x]) \cong X$. It is interesting to consider what $M_{\sigma}([\CO_x])$ is after wall-crossing. This question has been considered in \cite{Toda14} and in \cite{K3}. In \cite{Toda14}, the author shows that there is a correspondence between wall-crossing and the minimal model program. He shows that contractions of curves of self-intersection $-1$ can be realized as wall-crossing in ${\rm Stab}(X)$. That is, if $f\colon X \rightarrow Y$ is a birational map contracting a $-1$ curve on $X$, then there is a wall of the geometric chamber such that, after crossing, $M_{\sigma}([\CO_x]) \cong Y$. 

Here we vary the choice of ample divisor $\omega$ until it becomes nef. That is, there is a curve $C$ on $X$ whose intersection with this nef divisor is $0$. We consider the case in which this curve $C \cong \BP^1$ on $X$ such that $C^2=-n$ where $n \geq 2$. In Section \ref{sec:stab}, we construct a wall in the geometric chamber corresponding to the curve $C$, at which the points of $C$ become strictly semistable. 

Given a nef divisor $H$ such that $H \cdot C=0$ and $H \cdot C'>0$ for all curves $C' \not\subseteq C$, and a divisor class $\beta$ such that $H \cdot \beta=0$, we construct a central charge $$Z_{H,\beta}(E\cx)=-\ch_2(E\cx)+\beta \cdot \ch_1(E\cx) +z \, \ch_0(E\cx)+i \, H\cdot \ch_1(E).$$ We construct a heart of a bounded t-structure $\CB_{H,k}^{-\Imag(z)}$ by tilting $\Coh(X)$ twice. 

\newtheorem*{thm:stab}{Theorem \ref{thm:stab}}
\begin{thm:stab}
	The pair $(Z_{H,\beta},\CB_{H,k}^{-\Imag(z)})$ define a stability condition on $\CDb(X)$ when $k$ is chosen so that $k+\frac{n}{2}<\beta \cdot C < k+\frac{n}{2}+1$ and $\Real(z)+\frac{\Imag(z)^2}{H^2} > -\frac{\beta^2}{2}$.
\end{thm:stab}
We further show that we can study wall-crossing by showing this stability condition satisfies the support property \ref{defn:support}.

\newtheorem*{thm:supportbr}{Theorem \ref{thm:supportbr}}
\begin{thm:supportbr}
	The central charge $Z_{H,\beta}$ satisfies the support property for Bridgeland semistable objects in $\CB_{H,k}^{-\Imag(z)}$. 
\end{thm:supportbr}

In Sections \ref{sec:moduli} and \ref{sec:reducedness}, we study the moduli space $M_{\sigma}([\CO_x])$ of stable objects of class $[\CO_x]$ after crossing this wall. We show the following. 

\newtheorem*{thm:mod}{Theorem \ref{thm:mod}}
\begin{thm:mod}
	There is an isomorphism $X \sqcup_C \BP^{n-1} \rightarrow M_{\sigma}([\CO_x])$, where $C$ is embedded in $\BP^{n-1}$ as a rational normal curve. 
\end{thm:mod}

This generalizes the results of \cite{Toda13} for $n=1$ and \cite{K3} for $-2$ curves on K3 surfaces. For $n\geq 3$ this space is reducible, and is the first example in the study of Bridgeland stability in which wall-crossing produces a more complicated moduli space. 

\section{Background}
\label{background}
\label{sec:back}

Let $X$ be a smooth projective surface, and let $\CDb(X)$ be the bounded derived category of coherent sheaves on $X$. In this section, our goal is to recall a notion of stability for objects in $\CDb(X)$ defined in \cite{Bridgeland}, and describe some properties of this definition of stability which will be important in the subsequent sections. 

\begin{defn}
	\label{def:heart}
	A heart of a bounded t-structure is a full additive subcategory $\CA$ of $\CDb(X)$ satisfying 
	\begin{enumerate}
		\item $\Hom^i(A,B)=0$ for $i<0$ and $A,B \in \CA$.
		\item Objects in $\CDb(X)$ have filtrations by cohomology objects in $\CA$. That is, for all nonzero $E\cx \in \CDb(X)$, there is a sequence of exact triangles
		\begin{center}
			\begin{tikzpicture}
			\node at (0,0) {$0=E_0\cx$};
			\node at (2,0) {$E_1\cx$};
			\node at (4,0) {$E_2\cx$};
			\node at (6,0) {$\cdots$};
			\node at (8,0) {$E_{n-1}\cx$};
			\node at (10,0) {$E_n\cx=E\cx$};
			\node at (1,-1) {$A_1\cx$};
			\node at (3,-1) {$A_2\cx$};
			\node at (9,-1) {$A_n\cx$};
			\draw[->] (8.5,0) to (9.3,0);
			\draw[->] (.7,0) to (1.7,0);
			\draw[->] (2.3,0) to (3.5,0);
			\draw[->] (4.3,0) to (5.5,0);
			\draw[->] (6.4,0) to (7.2,0);
			\draw[->] (2,-.2) to (1.2,-.8);
			\draw[->] (10,-.2) to (9.2,-.8);
			\draw[->] (4,-.2) to (3.2,-.8);
			\draw[->, dashed] (.8,-.8) to (0,-.2);
			\draw[->, dashed] (2.8,-.8) to (2,-.2);
			\draw[->, dashed] (8.8,-.8) to (8,-.2);
			\end{tikzpicture}
		\end{center}
		such that $A_i[-k_i] \in \CA$ for integers $k_1>\cdots > k_n$.  
	\end{enumerate}
\noindent The cohomology objects $A_i[-k_i]$ of $E\cx$ in the heart $\CA$ are denoted by $H_{\CA}^{k_i}(E\cx)$.
\end{defn}

It is easy to check that if $\CA$ is a heart of a bounded t-structure in $\CDb(X)$, then $\CA$ is abelian.

\begin{defn}
	\cite[Proposition 5.3]{Bridgeland}
	\label{def:stab}
	A Bridgeland stability condition is a pair $\sigma=(Z, \CA)$ where $Z \colon K_0(\CDb(X)) \rightarrow \BC$ is a group homomorphism and $\CA$ is a heart of a bounded t-structure. The pair must further satisfy that 
	\begin{enumerate}
		\item $Z(\CA \setminus \{0\}) \subseteq \{re^{i\pi \phi} \ \vert \ r>0, \ 0< \phi \leq 1 \}$. Define the phase of $0\neq E \in \CA$ to be $\phi(E):=\phi$. We say $E\in \CA$ is $Z$-semistable if for all nonzero subobjects $F \in \CA$ of $E$, $\phi(F) \leq \phi(E)$. $E$ is $Z$-stable if for all nonzero subobjects $F \in \CA$ of $E$, $\phi(F)<\phi(E)$. 
		\item The objects of $\CA$ have Harder-Narasimhan filtrations with respect to $Z$. That is, for every $E \in \CA$ there is a unique sequence of inclusions $$0=E_0 \subseteq E_1 \subseteq \cdots \subseteq E_{n-1} \subseteq E_n=E$$ such that the successive quotients $E_{i}/E_{i-1}$ are $Z$-semistable, and the phases $\phi(E_1/E_0)> \phi(E_2/E_1) > \cdots > \phi(E_{n-1}/E_{n-2})>\phi(E_{n}/E_{n-1})$.
	\end{enumerate}
\end{defn}

Consider the set of all stability conditions on $X$, denoted ${\rm Stab}(X)$. We will place a restriction on the stability conditions we consider. Recall that there is an Euler pairing on $K(X)$, defined by $\chi(E,F)=\sum_i (-1)^i{\rm dim} \ \Hom^i(E,F)$. We will restrict to stability conditions which factor through the quotient $\mathcal{N}(X)$ of $K(X)$ by the kernel of the this pairing. These are called numerical stability conditions. The set of all such stability conditions is denoted ${\rm Stab}_{\mathcal{N}}(X)$, or simply ${\rm Stab}(X)$. The following theorem says that under this restriction, the set of stability conditions is in fact a complex manifold. 

\begin{thm}\cite[Corollary 1.3]{Bridgeland}
	\label{thm:manifold}
	For each connected component $\Sigma \subseteq {\rm Stab}_{\mathcal{N}}(X)$, there is a subspace $V(\Sigma) \subseteq \Hom(\mathcal{N}(X),\BC)$ and a local homeomorphism $Z\colon \Sigma \rightarrow V(\Sigma)$ which maps a stability condition to its central charge. In particular, $\Sigma$ is a finite-dimensional complex manifold. 
\end{thm}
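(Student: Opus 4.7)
The plan is to reformulate a stability condition as a pair $(Z, \mathcal{P})$, where the \emph{slicing} $\mathcal{P}$ assigns to each $\phi \in \BR$ a full additive subcategory $\mathcal{P}(\phi) \subset \CDb(X)$ of semistable objects of phase $\phi$, satisfying $\mathcal{P}(\phi+1) = \mathcal{P}(\phi)[1]$, $\Hom(A,B) = 0$ for $A \in \mathcal{P}(\phi_1)$, $B \in \mathcal{P}(\phi_2)$ with $\phi_1 > \phi_2$, and existence of HN filtrations for all objects of $\CDb(X)$. The heart $\CA$ of Definition \ref{def:stab} is recovered as $\mathcal{P}((0,1])$. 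This reformulation packages the phases as continuous data and provides the natural coordinate system for placing a topology on $\Sigma$.

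Next I would equip $\Sigma$ with the generalized metric
\[
d(\sigma_1, \sigma_2) = \sup_{0 \neq E \in \CDb(X)}\bigl\{|\phi_1^+(E) - \phi_2^+(E)|,\ |\phi_1^-(E) - \phi_2^-(E)|,\ \|Z_1 - Z_2\|\bigr\},
\]
where $\phi^\pm(E)$ are the maximal and minimal phases appearing in the HN filtration of $E$. In this topology the map $Z$ is automatically continuous. The heart of the argument is then the \emph{deformation theorem}: for every $\sigma = (Z, \mathcal{P}) \in \Sigma$ there exists $\varepsilon > 0$ such that any $W \in \Hom(\CN(X), \BC)$ with $\|W - Z\| < \varepsilon$ (in a norm built from the phases of $\sigma$-semistable classes) lifts uniquely to a stability condition $\sigma' = (W, \mathcal{Q}) \in \Sigma$ with $d(\sigma, \sigma')$ controlled by $\|W-Z\|$. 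Granting this, $Z$ is a local homeomorphism onto an open subset of the linear subspace $V(\Sigma) \subseteq \Hom(\CN(X),\BC)$ spanned locally by these deformations, and finite dimensionality of $\Sigma$ follows because $\CN(X)$ is a finitely generated abelian group.

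The construction of $\mathcal{Q}$ from $\mathcal{P}$ would proceed by refinement: each $\mathcal{P}(\phi)$ sits inside the quasi-abelian category $\mathcal{P}((\phi - \tfrac{1}{2}, \phi + \tfrac{1}{2}))$, and its objects are filtered via $W$-HN filtrations in this ambient category, producing finitely many factors whose phases lie in a small interval around $\phi$; reassembling across $\phi$ gives the candidate slicing $\mathcal{Q}$. The main obstacle, and the technical heart of the proof, is showing that these HN filtrations exist and terminate. One needs a uniform bound of the form $|Z(E)| \geq C \cdot \|[E]\|$ on classes $[E] \in \CN(X)$ of $\sigma$-semistable objects, so that a comparison with $W$ bounds the lengths of candidate destabilizing chains; this is the role played by the local finiteness condition implicit in Definition \ref{def:stab}, which combined with finite generation of $\CN(X)$ supplies the required discreteness (and is strengthened to the support property of Theorem \ref{thm:supportbr} for the examples constructed later). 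Once the lift $\sigma'$ is established, uniqueness together with the continuity of $Z$ forces local injectivity and completes the proof that $Z$ is a local homeomorphism.
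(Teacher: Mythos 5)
The paper offers no proof of this statement: it is quoted directly as \cite[Corollary 1.3]{Bridgeland}, so there is no internal argument to compare yours against. Measured against Bridgeland's actual proof, your outline reproduces its architecture faithfully: the repackaging of a stability condition as a pair (central charge, slicing), the generalized metric built from the extremal HN phases $\phi^{\pm}$, the deformation theorem as the central step, the construction of the deformed slicing $\mathcal{Q}$ by taking $W$-HN filtrations inside the thin quasi-abelian categories $\mathcal{P}((\phi-\tfrac{1}{2},\phi+\tfrac{1}{2}))$, and the observation that finite generation of $\mathcal{N}(X)$ is what makes $\Sigma$ finite-dimensional. Two small notational points: the third entry of Bridgeland's metric is the mass ratio $|\log(m_{\sigma_1}(E)/m_{\sigma_2}(E))|$ rather than $\|Z_1-Z_2\|$ (which does not depend on $E$), and local injectivity of $Z$ also uses the fact that two stability conditions at distance less than $1$ with equal central charges coincide.

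The one place where your sketch is genuinely off is the justification of termination. The uniform bound $|Z(E)|\geq C\cdot\|[E]\|$ on semistable classes is exactly the support property of Definition \ref{defn:support}; it is \emph{not} what local finiteness means and is not a consequence of it. Bridgeland's Corollary 1.3 is proved for locally finite stability conditions, where local finiteness asserts that each thin category $\mathcal{P}((\phi-\varepsilon,\phi+\varepsilon))$ is of finite length, and it is this finite-length property --- not a norm bound --- that makes the candidate destabilizing chains in the refinement step terminate. If you instead assume the support property you obtain a shorter proof of the deformation theorem (the route of \cite{KS} and \cite[Appendix A]{BMS}, which is precisely why the paper must establish Theorem \ref{thm:supportbr} separately before discussing wall-crossing), but then you have proved a statement about a smaller class of stability conditions than the one quoted, since a locally finite stability condition need not satisfy the support property. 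Either hypothesis closes the argument; the gap is only in attributing the norm bound to local finiteness rather than choosing one of the two hypotheses and running the corresponding termination argument.
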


Given a stability condition on $X$, we would like to be able to deform the stability condition in ${\rm Stab}(X)$ and study how the set of stable objects changes and as the stability condition changes. In order to study such deformations, we will need to require that the stability conditions we study have a sort of continuity property called the support property. By \cite[Proposition B.4]{BM2011}, this is equivalent to the stability condtion being full, as defined in \cite[Definition 4.2]{K3}. 
\begin{defn}
	\label{defn:support}
	A stability condition $\sigma=(Z,\CA)$ satisfies the support property \cite[Section 1.2]{KS} if there exists a constant $C>0$ so that for all $Z$-stable $E\cx \in \CDb(X)$, $$\frac{|Z(E\cx)|}{||E\cx||}>C.$$
\end{defn}

Now let us consider only stability conditions in ${\rm Stab}(X)$ with the support property. Fix a primitive class $[E\cx]$ of objects in $K(\CDb(X))$. Then \cite[Section 9]{K3} shows that ${\rm Stab}(X)$ has a wall and chamber structure. That is, ${\rm Stab}(X)$ decomposes into open subsets $U$ called chambers, $U$, and codimension one closed submanifolds $W$. If $\sigma$ is a stability condition in chamber $U$ and $E\cx$ is a $\sigma$-stable objects of class $[E\cx]$, then $E\cx$ remains stable for all other stability conditions in $U$. That is, stable objects of class $[E\cx]$ may only destabilize along walls $W$.

Let $x \in X$. Consider the class $[\CO_x]$ of the skyscraper sheaf at $x$ in $K(X)$. There is a special set of stability conditions called geometric stability conditions, constructed by \cite{K3} for K3 surfaces, and by \cite{AB} for all smooth projective surfaces. These are stability conditions for which all skyscraper sheaves are stable. The chamber of ${\rm Stab}(X)$ containing these stability conditions is called the geometry chamber. The goal of this paper is to deform these stability conditions to construct a wall in ${\rm Stab}(X)$ for surfaces $X$ which contain a curve of negative self-intersection, and to describe the moduli space of stable objects of class $[\CO_x]$ across this wall. 

We now describe the construction of geometric stability conditions from \cite{AB}, as this will be the starting point for our later construction. First, we must construct a heart of a bounded t-structure.

\begin{defn}
	A torsion pair in a heart $\CA$ is a pair $(\CT , \CF)$ of full additive subcategories of $\CA$ such that 
	\begin{enumerate}
		\item If $T \in \CT$ and $F \in \CF$, then $\Hom(T,F)=0$.
		\item For all $E \in \CA$ there is an object $T \in \CT$ and $F \in \CF$ so that the sequence $0 \rightarrow T \rightarrow E \rightarrow F \rightarrow 0$ is exact.
	\end{enumerate}
\end{defn}

Given a torsion pair $(\CT, \CF)$ in $\CA$, we can construct a new heart of a bounded t-structure $$\CA^{\#}=\{E\cx \in \CDb(X) \, | \, H^0_{\CA}(E\cx) \in \CT, \, H^{-1}_{\CA}(E\cx)\in \CF, \, H^{i}_{\CA}(E\cx)=0 \ {\rm for} \ i\neq 0,-1\}.$$ This new heart is called a tilt of $\CA$.

We can define stability on a surface $X$ on a tilt of the standard heart $\Coh(X)$. This is the tilt at slope \cite[Lemma 6.1]{K3}. First, we fix an ample divisor $H$ on $X$. The slope of a nonzero sheaf $E \in \Coh(X)$ is 
$$\mu_H(E) = \left\{
\begin{array}{lr}
\frac{H \cdot \ch_1(E)}{\ch_0(E)} &  E \ {\rm torsion-free} \\
\infty &   E \ {\rm torsion} \\
\end{array}
\right.$$
\begin{defn}
	\label{def:stablesheaf}
	A sheaf $E$ is $\mu_H$-stable if for all subobjects $0 \neq F \subseteq E$, $\mu_H(F)<\mu_H(E)$. $E$ is $\mu_H$-semistable if for all subobjects $0 \neq F \subseteq E$, $\mu_H(F) \leq \mu_H(E)$. 
\end{defn}
Note that it would be equivalent to define $E$ to be $\mu_H$-stable if for all quotients $E \twoheadrightarrow G$, $\mu_H(E)< \mu_H(G)$. 

Fix a number $a \in \BR$.
$$\CT_H^a:=\{T \in \Coh(X) \, | \, {\rm for \ all} \ T \twoheadrightarrow S, \ \mu_H(S)>a\}. $$
$$\CF_H^a:=\{F \in \Coh(X) \, | \, {\rm for \ all} \ G \hookrightarrow F, \ \mu_H(G)\leq a\}. $$ Note that all torsion sheaves and $\mu_H$-semistable sheaves of slope greater than $a$ lie in $\CT^a$, and all $\mu_H$-semistable sheaves of slope smaller than or equal to $a$ lie in $\CF^a$. 
\begin{lem}
	\label{lem:torsslope}
	$(\CT_H^a, \CF_H^a)$ is a torsion pair in $\Coh(X)$. 
\end{lem}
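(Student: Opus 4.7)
The plan is to verify both axioms of a torsion pair. For Hom-vanishing, given any morphism $\phi\colon T \to F$ with $T \in \CT_H^a$ and $F \in \CF_H^a$, its image $I := \mathrm{im}(\phi)$ is simultaneously a quotient of $T$ and a subsheaf of $F$. The first forces $\mu_H(I) > a$ by definition of $\CT_H^a$, while the second forces $\mu_H(I) \le a$ by definition of $\CF_H^a$. The only way to reconcile these is $I = 0$, so $\phi = 0$.

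For the decomposition of an arbitrary $E \in \Coh(X)$, I would invoke the Harder-Narasimhan filtration with respect to $\mu_H$. Adopting the convention that torsion sheaves have slope $+\infty$, every coherent sheaf on a smooth projective surface admits an HN filtration
$$0 = E_0 \subset E_1 \subset \cdots \subset E_n = E$$
with $\mu_H$-semistable quotients $E_j/E_{j-1}$ of strictly decreasing slopes $\mu_1 > \cdots > \mu_n$, so in particular the torsion subsheaf of $E$ is absorbed into the first few pieces. Let $i$ be the largest index with $\mu_i > a$, taking $i=0$ if no such index exists; set $T := E_i$ and $F := E/E_i$. This produces the short exact sequence $0 \to T \to E \to F \to 0$.

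To finish, I would verify the two memberships. For $T \in \CT_H^a$: any quotient $S$ of $T$ satisfies $\mu_H^-(S) \ge \mu_H^-(T) = \mu_i > a$ (since any quotient of $S$ is also a quotient of $T$), hence $\mu_H(S) \ge \mu_H^-(S) > a$. For $F \in \CF_H^a$: all HN factors of $F$ have finite slope at most $a$, so $F$ is torsion-free; then for any subsheaf $G \hookrightarrow F$, the analogous monotonicity gives $\mu_H^+(G) \le \mu_H^+(F) = \mu_{i+1} \le a$, so $\mu_H(G) \le \mu_H^+(G) \le a$.

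The main obstacle is not a real difficulty but the need to invoke the correct machinery: existence of HN filtrations for coherent sheaves on a smooth projective surface relative to $\mu_H$ (with torsion interpreted as slope $+\infty$), together with the standard monotonicity $\mu_H^-(\text{quotient}) \ge \mu_H^-(\text{object})$ and $\mu_H^+(\text{subobject}) \le \mu_H^+(\text{object})$. Once these are granted, the proof reduces to the bookkeeping described above.
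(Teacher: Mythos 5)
Your proof is correct: Hom-vanishing via the image of a morphism, plus the decomposition obtained by truncating the Harder--Narasimhan filtration of $E$ at slope $a$ (with torsion sheaves placed at slope $+\infty$), is exactly the standard tilting-at-slope argument. The paper gives no proof of Lemma \ref{lem:torsslope} at all, deferring to \cite[Lemma 6.1]{K3}, and your argument is the one used there; the only point worth flagging is that the lemma is re-invoked in Section \ref{sec:stab} for a nef divisor $H$, where the same proof still applies because HN filtrations with respect to $\nu_H$ continue to exist.
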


Following this lemma, let $\CA_H^0$ be the tilt of $\Coh(X)$ at the torsion pair $(\CT_H^0,\CF_H^0)$. The following is due to \cite{AB} and to \cite{K3} in the case that $X$ is a K3 surface.

\begin{prop}
	\label{prop:tilt}
	Choose a class $\beta \in NS_{\BR}(X)$. The pair $\sigma_{H,\beta}=(Z_{H,\beta},\CA_H^0)$ is a Bridgeland stability condition on $X$, where $Z_{H,\beta}(E\cx)=-\int_X\ch(E\cx)e^{\beta+iH}$.
\end{prop}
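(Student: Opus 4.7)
The plan is to verify the two axioms of Definition \ref{def:stab} for the pair $\sigma_{H,\beta} = (Z_{H,\beta}, \CA_H^0)$.

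First I would expand the central charge using $e^{\beta+iH} = 1 + (\beta + iH) + \tfrac{1}{2}(\beta + iH)^2$ and integrate over $X$, reading off the real and imaginary parts of $Z_{H,\beta}(E\cx)$ as explicit linear combinations of $\ch_0(E\cx)$, $\beta\cdot\ch_1(E\cx)$, $\ch_2(E\cx)$, and $H\cdot\ch_1(E\cx)$. In particular, $\Imag Z_{H,\beta}(E\cx)$ depends only on $H\cdot\ch_1(E\cx)$ and $\ch_0(E\cx)$, so it is tuned to the tilt data defining $\CA_H^0$.

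Next I would verify $Z_{H,\beta}(\CA_H^0\setminus\{0\}) \subseteq \{re^{i\pi\phi} : r > 0,\, 0 < \phi \leq 1\}$. Any nonzero $E\cx \in \CA_H^0$ fits in a triangle $H^{-1}(E\cx)[1] \to E\cx \to H^0(E\cx)$ with $H^0(E\cx) \in \CT_H^0$ and $H^{-1}(E\cx) \in \CF_H^0$, so by additivity of $Z$ it suffices to treat sheaves $T \in \CT_H^0$ and shifts $F[1]$ with $F \in \CF_H^0$. Decomposing further into $\mu_H$-Harder--Narasimhan factors reduces each case to $\mu_H$-semistable sheaves (together with torsion sheaves in $\CT_H^0$). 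On each piece the imaginary part of $Z_{H,\beta}$ has the correct sign by the definition of the torsion pair. The delicate step is the boundary case $\Imag Z_{H,\beta}(E) = 0$ with $E \neq 0$: here I would invoke the Bogomolov--Gieseker inequality $\ch_1(E)^2 \geq 2\ch_0(E)\,\ch_2(E)$ for $\mu_H$-semistable torsion-free sheaves, together with the Hodge index inequality $H^2 \cdot D^2 \leq (H\cdot D)^2$ applied to $D = \ch_1(E) - \beta\,\ch_0(E)$, to force strict negativity of $\Real Z_{H,\beta}(E)$. Torsion sheaves on the boundary are necessarily supported in dimension zero, so $\Real Z_{H,\beta} = -\ch_2 < 0$ directly. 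Objects of the form $F[1]$ are handled symmetrically by applying the same argument to $-Z$.

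For the Harder--Narasimhan axiom I would appeal to Bridgeland's standard criterion: it suffices that $\CA_H^0$ is noetherian and that the image of $\Imag Z_{H,\beta}$ on the heart is locally finite near zero. Noetherianity of the tilted heart follows from the noetherian property of $\Coh(X)$ combined with boundedness of families of $\mu_H$-semistable sheaves of fixed Chern character, while the local finiteness condition is automatic when $\beta \in \NS_\BQ(X)$ and extends to $\NS_\BR(X)$ by a density and continuity argument in the spirit of \cite{Bridgeland, AB}.

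The principal obstacle is the borderline positivity step. Proving strict negativity of $\Real Z_{H,\beta}$ exactly when $\Imag Z_{H,\beta} = 0$ requires the full force of Bogomolov--Gieseker together with Hodge index, and one must handle carefully the degenerate situation where $\ch_1(E) - \beta\,\ch_0(E)$ is numerically proportional to $H$, since this is precisely the equality case of Hodge index. This is where the compatibility between the torsion-pair defining $\CA_H^0$ and the central charge $Z_{H,\beta}$ is essential.
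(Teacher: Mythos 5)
The paper does not prove Proposition \ref{prop:tilt} at all: it is stated as background and attributed to \cite{AB} and, for K3 surfaces, to \cite{K3}, so there is no in-paper argument to compare against. Your outline --- reduce to $\mu_H$-semistable pieces via the tilting triangle and Harder--Narasimhan filtrations, handle the boundary case $\Imag Z_{H,\beta}=0$ with the Bogomolov--Gieseker inequality plus the Hodge index theorem, and deduce the HN property from noetherianity of the tilted heart together with discreteness of the imaginary part --- is precisely the standard argument of those references, so it is consistent with what the paper relies on; the only small refinement I would add is that the ``degenerate'' Hodge-index equality case you flag is not actually delicate, since for a torsion-free sheaf of rank $\ch_0(E)>0$ the strict inequality $\Real Z_{H,\beta}(E)>0$ is supplied by the term $\tfrac{H^2}{2}\ch_0(E)>0$ coming from $H$ being ample, independently of whether $\ch_1(E)-\beta\,\ch_0(E)$ is numerically trivial.
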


\section{Construction of a heart from a nef divisor}
\label{sec:stab}

Let $X$ be a smooth projective surface which contains a smooth, rational curve $C$ whose self-intersection is negative. Say $C^2=-n$ where $n \geq 2$. An example of such a surface is the Hirzebruch surface constructed as the projectivisation of the sheaf $\CO_{\BP^1}\oplus \CO_{\BP^1}(-n)$ on $\BP^1$. We now begin construction of a wall to the geometric chamber of ${\rm Stab}(X)$. Choose a nef divisor $H$ on $X$ satisfying that $C \cdot H=0$ and that $C'\cdot H>0$ for all curves $C'$ not contained in $C$.

For $E \in \Coh(X)$ torsion-free define
$$\nu_H(E) = \left\{
\begin{array}{lr}
\frac{H \cdot \ch_1(E)}{\ch_0(E)} &  E \ {\rm torsion-free} \\
\infty &   E \ {\rm torsion} \\
\end{array}
\right.$$
This slope function is the generalisation of the construction in Section \ref{background} to the case in which $H$ is nef. Define $\nu_H$ stability as $\mu_H$ stability was defined in Section \ref{background}. Fix $a \in \BR$, and define the following subcategories of ${\rm Coh}(X)$.
$$\CT_H^a=\{ T \in \Coh(X) \ \vert \ \nu_H(S)>a \ {\rm for \ all} \ T \twoheadrightarrow S \}.$$
$$\CF_H^a=\{F \in \Coh(X) \ \vert \ \nu_H(G)\leq a \ {\rm for \ all} \ G \hookrightarrow F\}.$$ By Lemma \ref{lem:torsslope} these two subcategories of $\Coh(X)$ are a torsion pair.

Let $$\CA_H^a:=\{E\cx\in \CDb(X) \ \vert \ H^0(E\cx) \in \CT_H^a, \ H^{-1}(E\cx) \in \CF_H^a, \ H^i(E\cx)=0 \ {\rm if} \ i\neq 0,-1 \}.$$ Unlike in Proposition \ref{prop:tilt}, $H$ is nef, and so this will not necessarily form part of a Bridgeland stability condition on $X$. We will instead tilt this heart again, at a torsion pair constructed by considering sheaves supported on the curve $C$.

If we consider now the sheaves $\CO_C(i)$, the twists of the structure sheaf of $C$, we see that hese are torsion sheaves on $X$, and so each has slope $\infty$ for all choices of $H$. This means that all such sheaves lie in $\CT_H^a$, and so in $\CA_H^a$. Recall that for $\mathcal{S} \subseteq \CDb(X)$, $\langle \mathcal{S} \rangle$ is notation for the extension closure of $\mathcal{S}$. That is, $\langle \mathcal{S} \rangle$ is the smallest subcategory of $\CDb(X)$ closed under taking extensions of objects in $\mathcal{S}$. We will now define the following subcategories of $\CA_H^a$.

The first subcategory we define is
$$\CF_{H,k}^a=\langle \CO_C(i) \ \vert \ i\leq k \rangle.$$
We then define another subcategory to be the left orthogonal to $\CF_{H,k}^a$. That is,
$$\CT_{H,k}^a=\{ E\cx \in \CA_H^a \ \vert \ {\rm Hom}(E\cx,\CO_C(i))=0 \ {\rm for} \ i \leq k\}.$$ 

\begin{lem}
	\label{lem:dechoms}
	If there is a sequence of inclusions in $\CA_H^{-\Imag(z)}$, say $$\cdots \hookrightarrow S_{i}\cx \hookrightarrow S_{i-1}\cx \hookrightarrow \cdots \hookrightarrow S_1\cx\hookrightarrow S_0\cx$$ whose quotients lie in $\CF_{H,k}^{-\Imag(z)}$, then for $i \gg 0$, $S_i\cx \cong S_{i-1}\cx$
\end{lem}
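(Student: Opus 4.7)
The plan is first to reduce to a statement about subsheaves of a fixed coherent sheaf, then use the central charge $Z_{H,\beta}$ together with a Bogomolov--Gieseker-type inequality to force the cumulative multiplicity along $C$ to be bounded. For the reduction, note every object of $\CF_{H,k}^{-\Imag(z)} = \langle \CO_C(j) \mid j \leq k \rangle$ is an iterated extension of torsion sheaves supported on $C$, hence is itself a sheaf sitting in cohomological degree $0$ of $\CA_H^{-\Imag(z)}$. Taking the long exact sequence in $\Coh(X)$ of each short exact sequence $0 \to S_i\cx \to S_{i-1}\cx \to Q_i \to 0$ in $\CA_H^{-\Imag(z)}$ therefore yields $H^{-1}(S_i\cx) \cong H^{-1}(S_0\cx)$ for every $i$, together with a descending chain of subsheaves $H^0(S_i\cx) \subseteq E := H^0(S_0\cx)$ whose successive quotients are the $Q_i$.

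Next I would extract a linear upper bound from $Z$. Using the resolution $0 \to \CO_X(-C) \to \CO_X \to \CO_C \to 0$ one computes $\ch(\CO_C(j)) = (0,[C],j+n/2)$ (using $C^2=-n$), so $Z_{H,\beta}(\CO_C(j)) = \beta \cdot C - j - n/2$. The hypothesis $k + n/2 < \beta \cdot C$ then bounds this below by $\epsilon := \beta \cdot C - n/2 - k > 0$ for every $j \leq k$. By additivity of $Z$ along extensions, every $Q \in \CF_{H,k}^{-\Imag(z)}$ of length $m$ satisfies $\Real(Z(Q)) \geq m\epsilon$. Setting $M_i := \sum_{\ell \leq i} m(Q_\ell)$, which is the multiplicity of $E/H^0(S_i\cx)$ along $C$, additivity of $Z$ in $0 \to H^0(S_i\cx) \to E \to E/H^0(S_i\cx) \to 0$ yields
$$\Real(Z(H^0(S_i\cx))) \leq \Real(Z(E)) - M_i \epsilon.$$

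The main obstacle will be supplying a matching quadratic lower bound in $M_i$. One has $\ch_1(H^0(S_i\cx)) = \ch_1(E) - M_i[C]$, so
$$\ch_1(H^0(S_i\cx))^2 = \ch_1(E)^2 - 2M_i\, \ch_1(E)\cdot C - n M_i^2,$$
tending to $-\infty$ quadratically in $M_i$ since $C^2 = -n < 0$. Assuming $r := \ch_0(E) > 0$ (the torsion case $r = 0$ is immediate, as then $M_i \leq \mathrm{mult}_C(E)$), the Bogomolov--Gieseker inequality applied to the $\mu_H$-Harder--Narasimhan factors of $H^0(S_i\cx)$, combined with the bound on HN spread coming from $H^0(S_i\cx) \subseteq E$ (bounding $\mu_{\max}$ by $\mu_{\max}(E)$) and $H^0(S_i\cx) \in \CT_H^{-\Imag(z)}$ (bounding $\mu_{\min}$ below by $-\Imag(z)$), should give an estimate of the form $\ch_2(H^0(S_i\cx)) \leq \ch_1(H^0(S_i\cx))^2/(2r) + O(1)$. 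This yields
$$\Real(Z(H^0(S_i\cx))) \geq \frac{n M_i^2}{2r} + O(M_i),$$
incompatible with the linear upper bound of the previous paragraph once $M_i$ is large. Hence $M_i$ is bounded, so $m(Q_i) = 0$ for $i \gg 0$, and $S_i\cx \cong S_{i-1}\cx$ for all sufficiently large $i$. The technical difficulty concentrates in the Bogomolov step: one must handle the possibility that $H^0(S_i\cx)$ is not itself $\mu_H$-semistable via a standard HN-spread correction to the classical inequality, using that the spread is uniformly bounded in $i$.
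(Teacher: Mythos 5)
Your argument is sound in outline, but it takes a genuinely different route from the paper's. The paper performs the same initial reduction to a descending chain of subsheaves $H^0(S_i\cx)\subseteq H^0(S_0\cx)$ with quotients in $\CF_{H,k}^{-\Imag(z)}$, but then refines the chain so that each successive quotient is a single $\CO_C(l_i)$ with $l_i\leq k$, computes $\Hom$, $\Ext^1$ and $\Ext^2$ from $\CO_C(l_i)$ to $\CO_C(k)$ explicitly, and deduces that the non-negative integer $\dim\Hom(S_i\cx,\CO_C(k))$ strictly decreases at every non-trivial step; termination is then immediate. That argument is elementary, local to $C$, and uses no numerical input whatsoever. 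Your route instead bounds the total multiplicity $M_i$ of $E/H^0(S_i\cx)$ along $C$ by playing a linear upper bound against a quadratic lower bound coming from $C^2=-n<0$ and Bogomolov--Gieseker; it has the virtue of making visible that negativity of $C^2$ is what forces termination. Three caveats. First, the lemma is stated (and must hold) before $\beta$ or the inequality $k+\frac{n}{2}<\beta\cdot C$ are ever introduced, so you should not lean on $\epsilon>0$; this is cosmetic, since $\ch_2(\CO_C(j))=j+\frac{n}{2}\leq k+\frac{n}{2}$ for $j\leq k$ gives the linear bound on $\ch_2(E/H^0(S_i\cx))$ directly, with no reference to $Z$ or to the sign of $\epsilon$ (quadratic beats linear either way). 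Second, the step you rightly flag as the crux --- the spread-corrected Bogomolov inequality --- must be run with the $\mu_H$-HN filtration for the \emph{nef} class $H$, exactly as you propose: only $\mu_{\min,H}$ is controlled by membership in $\CT_H^{-\Imag(z)}$, while the ample-slope spread of the $H^0(S_i\cx)$ is not uniformly bounded (they contain twists by large multiples of $-C$). This requires Bogomolov--Gieseker for $\mu_H$-semistable sheaves with $H$ merely nef and big, and finiteness of $\mu_{\max,H}$ for nef $H$; both are standard, and the paper itself uses the former without comment in the proof of Lemma 5.2, but they should be cited. Third, when $\ch_0(E)>0$ the sheaves $H^0(S_i\cx)$ may still have torsion, so one must first split off the torsion subsheaf (it lies in the fixed torsion part of $E$ and contributes boundedly to $\ch_1$ and $\ch_2$, by Grothendieck's boundedness lemma applied to its cokernel in that torsion part) before applying the torsion-free HN/Bogomolov machinery. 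None of this breaks the strategy, but it is precisely the bookkeeping that the paper's Hom-counting argument avoids.
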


\begin{proof}
	Suppose there is a sequence of inclusions 
	\begin{equation}
	\label{eq:tp1}
	\cdots \hookrightarrow S_{i+1}\cx \hookrightarrow S_{i}\cx \hookrightarrow \cdots \hookrightarrow S_1\cx\hookrightarrow S_0\cx
	\end{equation}
	such that for all $i$, $S_i\cx \in \CA_{H}^{-\Imag(z)}$, and the quotient $F_i$ of the map $S_{i+1}\cx \hookrightarrow S_{i}\cx$ lies in  $\CF_{H,k}^{-\Imag(z)}$. First note that if we take the long exact sequence of cohomology, for every $i$, $H^{-1}(S_i) \cong H^{-1}(S_{i+1})$, and there is a corresponding sequence of sheaves $$\cdots \hookrightarrow H^0(S_{i+1}\cx) \hookrightarrow H^0(S_i\cx) \hookrightarrow \cdots \hookrightarrow H^0(S_1\cx) \hookrightarrow H^0(S_0\cx)$$ whose quotients are the same sheaves $F_i$. Hence it is enough to prove that $\ref{eq:tp1}$ stabilizes when the $S_i$ in (\ref{eq:tp1}) are sheaves in $\CT_H^{-\Imag(z)}$.
	
	Furthermore, every $F_i \in \CF_{H,k}^{-\Imag(z)}$ has a nonzero surjective morphism in $\Coh(X)$ to $\CO_C(l_i)$ for some $l_i \leq k$. Let $S_i^{(1)}$ be the kernel of the composition $S_i \rightarrow F_i \rightarrow \CO_C(l_i)$ We can see via the octahedral axiom that there is an exact sequence of sheaves $$0 \rightarrow S_i^{(1)} \rightarrow S_i \rightarrow \CO_C(l_i) \rightarrow 0.$$ 
	
	The quotient $F_i^{(1)}$ of the map $S_{i+1} \rightarrow S_i$ fits into an exact sequence $$0 \rightarrow F_i^{(1)} \rightarrow F_i \rightarrow \CO_C(l_i) \rightarrow 0.$$ This implies that $F_i^{(1)} \in \CF_{H,k}^{-\Imag(z)}$. Since $F_i \in \CF_{H,k}^{-\Imag(z)}$, $\ch_1(F_i)=m[C]$ for some $m \in \mathbb{N}$. Hence $\ch_1(F_i^{(1)})=(m-1)[C]$. We can now apply this process to the map $S_{i+1} \rightarrow S_i^{(1)}$ and repeat until we have a sequence $$S_{i+1} \hookrightarrow S_i^{(m-1)} \hookrightarrow \cdots \hookrightarrow S_i^{(1)} \hookrightarrow S_i,$$ all of whose quotients are sheaves of the form $\CO_C(l_i^{(j)})$ for some $l_i^{(j)} \leq k$. By applying this process to (\ref{eq:tp1}), we can assume each quotient $F_i$ in (\ref{eq:tp1}) is in fact $\CO_C(l_i)$ for some $l_i \leq k$. 
	
	Consider the exact sequence $$0 \rightarrow S_{i+1} \rightarrow S_{i} \rightarrow \CO_C(l_{i}) \rightarrow 0.$$ Since $l_i \leq k$, we can compute $\Hom(\CO_C(l_{i}),\CO_C(k)) \cong \BC^{k-l_i+1}$. Furthermore, $\Ext^1(\CO_C(l_i),\CO_C(k)) \cong \mathcal{H}^1(X,\CO_C(k) \otimes \CO_C(l_i)^{\vee})$. As there is an exact sequence $$0 \rightarrow \CO_X(-C)(l_i) \rightarrow \CO_X(l_i) \rightarrow \CO_C(l_i) \rightarrow 0$$ in $X$, we can compute $\CO_C(l_i)^{\vee}$ in $\CDb(X)$ as the complex $\CO_C(-l_i) \rightarrow \CO_C(-n-l_i)$. There are no morphisms between the two sheaves in this complex, hence we have $\mathcal{H}^1(X,\CO_C(k) \otimes \CO_C(l_i)^{\vee}) \cong \mathcal{H}^1(X,\CO_C(k-l_i)) \oplus \mathcal{H}^0(X,\CO_C(k-l_i-n))$. Hence if $k-l_i-n \geq 0$, $\Ext^1(\CO_C(l_i),\CO_C(k)) \cong \BC^{k-l_i-n+1}$, otherwise it is $0$. By a similar calculation, if $k-l_i-n<-1$, $\Ext^2(\CO_C(l_i),\CO_C(k)) \cong \BC^{l_i-k+n-1}$, otherwise it is zero.
	
	In particular, this means that either $\Ext^1(\CO_C(l_i),\CO_C(k)) \cong 0$ or $\Ext^2(\CO_C(l_i),\CO_C(k)) \cong 0$. Suppose first that $\Ext^1(\CO_C(l_i),\CO_C(k)) \cong 0$. Then taking the long exact sequence of cohomology, we see there is an exact sequence $$0 \rightarrow \Hom(\CO_C(l_i),\CO_C(k)) \rightarrow \Hom(S_i,\CO_C(k)) \rightarrow \Hom(S_{i+1},\CO_C(k)) \rightarrow 0.$$ Since $\Hom(\CO_C(l_i),\CO_C(k)) \neq 0$, this means that ${\rm dim} \ \Hom(S_i,\CO_C(k))>{\rm dim} \ \Hom(S_{i+1},\CO_C(k))$.
	
	Now suppose that $\Ext^2(\CO_C(l_i),\CO_C(k)) \cong 0$. Then again applying $\Hom(-,\CO_C(k))$ to the exact sequence $$0 \rightarrow S_{i+1} \rightarrow S_{i} \rightarrow \CO_C(l_{i}) \rightarrow 0$$ we see that $\Ext^2(S_i,\CO_C(k)) \cong \Ext^2(S_{i+1},\CO_C(k))$ and there is an exact sequence 
	\begin{gather*} 
	0 \rightarrow \Hom(\CO_C(l_i),\CO_C(k)) \rightarrow \Hom(S_i,\CO_C(k)) \rightarrow \Hom(S_{i+1},\CO_C(k)) \rightarrow \Ext^1(\CO_C(l_i),\CO_C(k)) \rightarrow \\
	\rightarrow \Ext^1(S_i,\CO_C(k)) \rightarrow \Ext^1(S_{i+1},\CO_C(k)) \rightarrow 0.
	\end{gather*}
	The sequence above is exact, so the alternating sum of the dimensions is $0$. That is,
	\begin{gather*}
	{\rm dim} \ \Hom(S_i,\CO_C(k))- {\rm dim} \ \Hom(S_{i+1},\CO_C(k)) = n-{\rm dim} \ \Ext^1(S_i,\CO_C(k)) - {\rm dim} \ \Ext^1(S_{i+1},\CO_C(k)).
	\end{gather*}
	Since the map $\Ext^1(S_i,\CO_C(k)) \rightarrow \Ext^1(S_{i+1},\CO_C(k))$ is surjective, we can say that ${\rm dim} \ \Ext^1(S_i,\CO_C(k)) > {\rm dim} \ \Ext^1(S_{i+1},\CO_C(k))$. Hence we see that in this case as well, ${\rm dim} \ \Hom(S_i,\CO_C(k))> {\rm dim} \ \Hom(S_{i+1},\CO_C(k))$. As these dimensions decrease when $i$ increases, we see that the sequence must terminate. 
\end{proof}

\begin{lem} 
	\label{tiltpairlem}
	The pair $(\CT_{H,k}^a,\CF_{H,k}^a)$ form a torsion pair in $\CA_H^a$. 
\end{lem}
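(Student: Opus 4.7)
The plan is to verify the two axioms of a torsion pair. For the orthogonality axiom, every $F \in \CF_{H,k}^a$ arises, by definition of extension closure, as a finite iterated extension of objects $\CO_C(i)$ with $i \le k$. Given $T \in \CT_{H,k}^a$ and a short exact sequence $0 \to A \to F \to B \to 0$ in $\CA_H^a$, the long exact $\Hom$-sequence shows $\Hom(T,F) = 0$ whenever $\Hom(T,A) = \Hom(T,B) = 0$. Induction on the length of the extension, with base case the defining vanishing $\Hom(T,\CO_C(i)) = 0$ for $i \le k$, then gives the first axiom.

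For the second axiom, given $E \in \CA_H^a$, I would iteratively peel off quotients of the form $\CO_C(j)$ with $j \le k$. If $E \in \CT_{H,k}^a$, take $T = E$ and $F = 0$. Otherwise there exists a nonzero $\phi\colon E \to \CO_C(l)$ with $l \le k$. I would compute ${\rm im}(\phi)$ inside the abelian category $\CA_H^a$ via the long exact cohomology sequence in $\Coh(X)$: since $\CO_C(l)$ is a sheaf, $H^{-1}({\rm im}(\phi)) = 0$; and since any nontrivial torsion subsheaf of $\CO_C(l)$ would force $H^{-1}({\rm coker}(\phi)) \subseteq \CO_C(l)$ to contain torsion, contradicting $H^{-1}({\rm coker}(\phi)) \in \CF_H^a$, one gets that ${\rm im}(\phi)$ is concentrated in degree zero and embeds into $\CO_C(l)$ as an honest subsheaf on $X$. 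Because $C \cong \BP^1$ and $\CO_C(l)$ is pure of dimension one, this subsheaf must be $\CO_C(j)$ for some $j \le l \le k$, hence lies in $\CF_{H,k}^a$. Let $E_1$ be the kernel in $\CA_H^a$ of the induced surjection $E \twoheadrightarrow \CO_C(j)$.

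Repeating this construction, either $E_i \in \CT_{H,k}^a$ and the process halts, or I obtain a proper inclusion $E_{i+1} \hookrightarrow E_i$ with quotient $\CO_C(j_i)$, $j_i \le k$. The result is a descending chain
\[
\cdots \hookrightarrow E_{i+1} \hookrightarrow E_i \hookrightarrow \cdots \hookrightarrow E_1 \hookrightarrow E
\]
in $\CA_H^a$ whose successive quotients lie in $\CF_{H,k}^a$. By Lemma \ref{lem:dechoms}, this chain stabilizes at some $E_N$, and the only way stabilization is compatible with a nonzero quotient at each step is that the process actually terminates at a stage where no nonzero morphism $E_N \to \CO_C(l)$ with $l \le k$ exists, i.e., $E_N \in \CT_{H,k}^a$. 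A short induction on $i \le N$ identifies $E/E_i$ as an iterated extension of the sheaves $\CO_C(j_0),\ldots,\CO_C(j_{i-1})$, so $E/E_N \in \CF_{H,k}^a$, and $0 \to E_N \to E \to E/E_N \to 0$ is the sought decomposition.

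The main obstacle is the image computation inside $\CA_H^a$: one must rule out both a nontrivial $H^{-1}$ piece of ${\rm im}(\phi)$ and a torsion subsheaf slipping into $\CF_H^a$, which is where the definitions of $\CT_H^a$ and $\CF_H^a$ (the absence of nonzero torsion in $\CF_H^a$) together with the purity of $\CO_C(l)$ on $C \cong \BP^1$ are essential. Once that identification is in place, axiom (1) is formal and axiom (2) is exactly the termination statement of Lemma \ref{lem:dechoms}.
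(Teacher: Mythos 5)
Your overall strategy coincides with the paper's: the only substantive content is the decomposition axiom, which both you and the paper obtain by repeatedly splitting off quotients lying in $\CF_{H,k}^a$ and invoking Lemma \ref{lem:dechoms} to terminate the resulting descending chain, while the orthogonality axiom is the formal induction on extension length that the paper leaves implicit.

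There is, however, a genuine flaw in the justification of your key step, namely that $\mathrm{im}(\phi)$ for a nonzero $\phi\colon E \to \CO_C(l)$ is an honest subsheaf of $\CO_C(l)$. From the short exact sequence $0 \to \mathrm{im}(\phi) \to \CO_C(l) \to \mathrm{coker}(\phi) \to 0$ in $\CA_H^a$, the long exact sequence in $\Coh(X)$ gives an injection $H^{-1}(\mathrm{coker}(\phi)) \hookrightarrow H^0(\mathrm{im}(\phi))$, \emph{not} into $\CO_C(l)$; so there is no torsion subsheaf of $\CO_C(l)$ to contradict, and the torsion-freeness of objects of $\CF_H^a$ alone does not rule out $H^{-1}(\mathrm{coker}(\phi)) \neq 0$, because $H^0(\mathrm{im}(\phi))$ is only a quotient of $H^0(E)$ and may well have positive rank. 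The claim is nonetheless true, but it needs the slope clash: the cokernel of $H^{-1}(\mathrm{coker}(\phi)) \hookrightarrow H^0(\mathrm{im}(\phi))$ is a subsheaf of $\CO_C(l)$, hence torsion and supported on $C$ with $H\cdot \mathrm{ch}_1 = 0$, so the two sheaves have equal rank and equal $H\cdot \mathrm{ch}_1$, hence equal $\nu_H$; if $H^{-1}(\mathrm{coker}(\phi))$ were nonzero it would be torsion-free of positive rank with $\nu_H \leq a$ (being in $\CF_H^a$), whereas $H^0(\mathrm{im}(\phi))$, a nonzero quotient of $H^0(E) \in \CT_H^a$, has $\nu_H > a$ --- a contradiction. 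With that repair your argument goes through; note that the paper sidesteps the issue by working directly with the image of $H^0(E\cx) \to F$ in $\Coh(X)$ rather than with images computed in the tilted heart.
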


\begin{proof}
	We must show that for any $E\cx \in \CA_{H}^a$, there is an exact triangle $$T\cx \rightarrow E\cx \rightarrow F$$ such that $T\cx \in \CT_{H,k}^a$ and $F \in \CF_{H,k}^a$. If $\Hom(E\cx, \CF_{H,k}^a)\neq 0$, then let $$\hat{S}_1\cx \rightarrow E\cx \rightarrow F$$ be an exact triangle with $F \in \CF_{H,k}^a$. 
	
	Taking the long exact sequence of sheaf cohomology, there is an exact sequence $$0 \rightarrow H^0(\hat{S}_1\cx) \rightarrow H^0(E\cx) \rightarrow F \rightarrow H^1(\hat{S}_1\cx) \rightarrow 0.$$ Let $F_1$ be the image of the map from $H^0(E\cx)$ to $F$. By composing the morphism from $E\cx$ to $H^0(E\cx)$ wiith this map, we get a morphism from $E\cx$ to $F_1$. Then there is an exact triangle $$S_1\cx \rightarrow E\cx \rightarrow F_1.$$ We will show that $S_1\cx \in \CA_{H}^a$. 

First, if we look at the long exact sequence of sheaf cohomology, we see that $H^{-1}(\hat{S}_1\cx) \cong H^{-1}(E\cx) \in \CF_H^a$. By construction, we also have a short exact sequence $$0 \rightarrow H^0(S_1\cx) \rightarrow H^0(E\cx) \rightarrow F_1 \rightarrow 0.$$ Let $G$ be any quotient of $H^0(S_1\cx)$, fitting into exact sequence $$0 \rightarrow R \rightarrow H^0(S_1\cx) \rightarrow G \rightarrow 0.$$ Then by composing the maps $R \hookrightarrow H^0(S_1\cx) \hookrightarrow H^0(E\cx)$ there is a short exact $$0 \rightarrow G \rightarrow H^0(E\cx)/R \rightarrow F_1.$$ Since $F_1$ is supported on $C$, $\nu_H(G)=\nu_H(H^0(E\cx)/R).$ And since $H^0(E\cx) \in \CT_H^a$, $\nu_H(H^0(E\cx)/R)>a$. Hence $S_1\cx \in \CA_H^a$.
	
	If $\Hom(S_1\cx,\CF_{H,k}^a) \neq 0$, then we can repeat this process, and construct an exact triangle $$S_2\cx \rightarrow S_1\cx \rightarrow F_2$$ with $F_2 \in \CF_{H,k}^a$. If we iterate this process we get a sequence of complexes $S_i\cx \in \CA_H^a$, such that $H^{-1}(S_i\cx) \cong H^{-1}(E\cx)$, and such that there is a descending chain of inclusions
	$$H^0(E\cx) \supseteq H^0(S_1\cx) \supseteq \cdots \supseteq H^0(S_i\cx) \supseteq H^0(S_{i+1}\cx) \supseteq \cdots$$ in $\Coh(X)$.
	
	By Lemma \ref{lem:dechoms}, this chain must terminate. That is, there exists a number $n$ such that for $i \geq n$, $H^0(S_i\cx) \cong H^0(S_{i+1}\cx)$. It follows that $\Hom(S_n\cx, \CF_{H,k}^a)=0$, and $$S_n\cx \rightarrow E\cx \rightarrow F_n$$ is the desired triangle. 
\end{proof}

We now tilt the heart $\CA_H^a$ and define the following heart in $\CDb(X)$:
$$\CB_{H,k}^a:=\{E\cx\in \CDb(X) \ \vert \ H_{\CA_H^a}^0(E\cx) \in \CT_{H,k}^a, \ H_{\CA_H^a}^{-1}(E\cx) \in \CF_{H,k}^a, \ H_{\CA_H^a}^i(E\cx)=0 \ {\rm if} \ i\neq0,-1 \}.$$

\section{Comparison with heart constructed in \cite{Toda13}}

We will now explain how the heart we have constructed compares with the heart in \cite[Section 3.1]{Toda13}. This is not necessary to the construction of our stability condition, it is for the purpose of comparison. We will show that our heart and Toda's coincide when $n=1$ and $a=0$.  

Following \cite[Section 3.1]{Toda13}, let $C$ be a curve on a smooth projective surface $X$ such that $C^2=-1$, and let $f\colon X \rightarrow Y$ be the map contracting this $-1$ curve. Let $H=f^*\omega$ be the pull-back of ample divisor $\omega$ on $Y$. Toda constructs a heart of a bounded t-structure in $\CDb(X)$ as a tilt of $\Per(X/Y)$, the category of perverse sheaves on $X$. This category can itself can be constructed as a tilt of $\Coh(X)$ as in \cite[Lemma 3.1.1]{vdbergh}.

Let $\CC=\{E \in {\rm Coh}X \ \vert \ \BR f_*E=0 \}$. Note that the only sheaf supported on $C$ which lies in $\CC$ is $\CO_C(-1)$. Consider the following torsion pair in $\Coh(X)$.
$$\CT_{-1}=\{E \in \Coh(X) \ \vert \ R^1f_*E=0, \  {\rm Hom}(E,\CC)=0 \}.$$
$$\CF_{-1}=\{E \in \Coh(X) \ \vert \ f_*E=0\}.$$
Then $\Per(X/Y)$ is the tilt of ${\rm Coh}(X)$ at the torsion pair $(\CT_{-1},\CF_{-1})$. That is, 
$$\Per(X/Y)=\{ E\cx \in \CDb(X) \ \vert \ H^0(E\cx) \in \CT_{-1}, \ H^{-1}(E\cx) \in \CF_{-1}, \ H^i(E\cx)=0 \ {\rm if} \ i\neq 0,-1\}.$$

Now define a slope function on $\Per(X/Y)$ as we did for $\Coh(X)$. 
For $E\cx \in \Per(X/Y)$, define 
$$\mu_{f^*\omega}(E\cx) = \left\{
\begin{array}{lr}
\frac{\ch_1(E\cx)\cdot f^*\omega}{\ch_0(E\cx)} &  \ch_0(E\cx) \neq 0\\
\infty &  \ch_0(E\cx)=0, E\cx \neq 0 \\
-\infty & E\cx=0 \\
\end{array}
\right.$$

We will now tilt the category of perverse sheaves at slope, as we did for coherent sheaves before. Let
$$^{-1}\CT_{f^*\omega}=\{T\cx \in \Per(X/Y) \ \vert \ \mu_{f^*\omega}(S\cx)>0 \ {\rm for \ all} \ T\cx \twoheadrightarrow S\cx \},$$
$$^{-1}\CF_{f^*\omega}=\{F\cx \in \Per(X/Y) \ \vert \ \mu_{f^*\omega}(E\cx)\leq 0 \ {\rm for \ all} \ E\cx \hookrightarrow F\cx \}.$$
Toda then is able to define a stability condition on the following heart, where $H^i_{Per}$ refers to cohomology with respect to the heart $\Per(X/Y)$:
$$\CB_{f^*\omega}=\{E\cx \in \CDb(X) \ \vert \ H^0_{Per}(E\cx) \in ^{-1}\CT_{f^*\omega}, \ H^{-1}_{Per}(E\cx) \in ^{-1}\CF_{f^*\omega}, \ H^i_{Per}(E\cx)=0 \ {\rm if} \ i\neq 0,-1\}.$$

\begin{lem}
	\label{lem:freeparts}
	For any ample divisor $\omega$ on $Y$, $\CF_{f^*\omega,-1}^0=\CF_{-1}$.
\end{lem}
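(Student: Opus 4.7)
The containment $\CF_{f^*\omega,-1}^0\subseteq\CF_{-1}$ is the routine direction: each generator $\CO_C(i)$ with $i\leq -1$ has $f_*\CO_C(i)=H^0(\BP^1,\CO(i))=0$, and $\CF_{-1}$ is closed under extensions by left-exactness of $f_*$ (given $0\to A\to E\to B\to 0$ with $f_*A=f_*B=0$, the four-term sequence $0\to f_*A\to f_*E\to f_*B$ forces $f_*E=0$), so every iterated extension of the $\CO_C(i)$ has vanishing $f_*$.

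For the reverse inclusion, given $E\in\CF_{-1}$, I first observe that $E$ is set-theoretically supported on $C$ (since $f$ is an isomorphism on $X\setminus C$), so $E$ is a module over $\CO_{mC}$ for some $m\geq 1$. The plan is to induct on the length invariant $\ell(E):=\sum_{j\geq 0}\mathrm{rk}_{\CO_C}(\CI_C^j E/\CI_C^{j+1}E)$, which is finite and additive on short exact sequences. The base case $\ell(E)=1$ is immediate: $E$ is a rank-one torsion-free coherent sheaf on $C\cong\BP^1$, hence $E\cong\CO_C(d)$ for some $d$, and $f_*E=0$ forces $d\leq -1$.

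For the inductive step, the subsheaf $\CI_C^{m-1}E\subseteq E$ is annihilated by $\CI_C$, is itself in $\CF_{-1}$ as a subsheaf of $E$, and by the base case splits as $\bigoplus_j\CO_C(d_j)$ with $d_j\leq -1$. I would then pick a summand $F\cong\CO_C(d)$ and apply induction to $E/F$; the long exact sequence for $\BR f_*$ identifies $f_*(E/F)=\ker\bigl(R^1 f_*F\to R^1 f_*E\bigr)$, so the question reduces to choosing $F$ for which this kernel vanishes. Whenever some $d_j=-1$ one takes $F=\CO_C(-1)$ and the vanishing $R^1 f_*\CO_C(-1)=0$ makes the conclusion automatic, after which extension closure of $\CF_{f^*\omega,-1}^0$ gives $E\in\CF_{f^*\omega,-1}^0$.

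The main obstacle is the remaining case in which every $d_j\leq -2$. Here one must either (i) exploit surjectivity of $R^1 f_*\CI_C^{m-1}E\twoheadrightarrow R^1 f_*E$ (from $R^2 f_*=0$ on $C$-supported sheaves) together with the constraint $f_*E=0$ to rechoose the direct-sum decomposition so that some factor $F_j$ injects into $R^1 f_*E$, yielding $E/F_j\in\CF_{-1}$; or (ii) when $\CI_C^{m-1}E\cong\CO_C(d)$ is a single summand with $d\leq -2$, produce an embedding $\CO_C(-1)\hookrightarrow E$ from outside $\CI_C^{m-1}E$ by analyzing the sequence obtained from $\Hom(\CO_C(-1),-)$ applied to $0\to\CI_C^{m-1}E\to E\to E/\CI_C^{m-1}E\to 0$, using $f_*E=0$ to force the relevant connecting map to produce a nonzero element. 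Completing this case analysis is the technical heart of the argument; once an appropriate $F$ is supplied in each case, induction on $\ell$ and extension closure close out the proof.
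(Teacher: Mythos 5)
Your forward inclusion $\CF_{f^*\omega,-1}^0\subseteq\CF_{-1}$ is correct and complete. The problem is the reverse inclusion, where your argument stops exactly at the point that carries all of the content of the lemma. Your induction needs, at each step, a subobject $F\cong\CO_C(d)$ of $E$ with $d\le -1$ such that $E/F$ is again in $\CF_{-1}$, i.e.\ such that $R^1f_*F\to R^1f_*E$ (equivalently $H^1(F)\to H^1(E)$) is injective; you establish this only when some summand of $\CI_C^{m-1}E$ has degree $-1$, and you explicitly defer the case in which every $d_j\le -2$. That deferred case is not a routine verification: one must show that $\ker\bigl(H^1(\CI_C^{m-1}E)\to H^1(E)\bigr)\cong H^0(E/\CI_C^{m-1}E)$ misses the $H^1$ of some sub-line-bundle of $\CI_C^{m-1}E$ of degree at most $-1$, and neither of your two suggested strategies is carried out. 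Moreover, strategy (i) rests on a false surjectivity claim: $R^2f_*=0$ makes $R^1f_*$ right exact, so it yields a surjection $R^1f_*E\twoheadrightarrow R^1f_*(E/\CI_C^{m-1}E)$, not a surjection $R^1f_*\CI_C^{m-1}E\twoheadrightarrow R^1f_*E$. As written, the proposal is therefore an incomplete proof.

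Two further remarks. First, the structural reason your induction is awkward is that you peel off subobjects, for which membership of the quotient in $\CF_{-1}$ is not automatic; if instead you peel off quotients $E\twoheadrightarrow\CO_C(d)$ with $d\le-1$, the kernel lies in $\CF_{-1}$ for free (it is a subsheaf of $E$ and $f_*$ is left exact), and the entire lemma reduces to producing one negative-degree line bundle quotient of each nonzero $E\in\CF_{-1}$ --- still a claim requiring proof, but a cleaner target. Second, for calibration: the paper's own proof is far terser than yours --- it asserts that $E$ ``must be a sheaf on $C$ with no global sections'' and concludes at once, which silently identifies $E$ with an $\CO_C$-module and ignores precisely the scheme-theoretic thickenings of $C$ that you correctly recognized as the difficulty. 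You have engaged with the real content of the statement more seriously than the printed proof does, but you have not closed the argument.
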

\begin{proof}
	First, since $\CO_C(i)$ has no global sections for $i<0$, $f_*\CO_C(i)=0$ when $i<0$. Now suppose $E$ is a sheaf in $\CF_{-1}$, that is $f_*E=0$. Then since $X\setminus C \cong Y \setminus P$, the support of $E$ must be contained in $C$. Specifically, $E$ must be a sheaf on $C$ with no global sections. This implies $f_*E=0$ and $E \in \CF_{f^*\omega,-1}^0$. 
\end{proof}

\begin{lem}
	\label{lem:slopes}
	For any $E\cx \in \Per(X/Y)$ such that $H^0(E\cx)\neq 0$, $\mu_{f^*\omega}(E\cx)=\nu_{f^*\omega}(H^0(E\cx))$.
\end{lem}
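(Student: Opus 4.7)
The approach is to compare $E\cx$ with its cohomology via the distinguished triangle $H^{-1}(E\cx)[1] \to E\cx \to H^0(E\cx)$ coming from the standard $t$-structure on $\CDb(X)$, and to argue that the $H^{-1}$ term is invisible to the numerator and denominator of the slope. Taking Chern characters in that triangle gives
$$\ch_0(E\cx) = \ch_0(H^0(E\cx)) - \ch_0(H^{-1}(E\cx)), \qquad \ch_1(E\cx) = \ch_1(H^0(E\cx)) - \ch_1(H^{-1}(E\cx)).$$

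The key observation is that $E\cx \in \Per(X/Y)$ forces $H^{-1}(E\cx) \in \CF_{-1}$, i.e.\ $f_* H^{-1}(E\cx) = 0$. Since $f$ restricts to an isomorphism $X \setminus C \to Y \setminus P$ (the same fact used in Lemma \ref{lem:freeparts}), pushforward is faithful on sheaves supported away from $C$, so $f_*H^{-1}(E\cx)=0$ forces $H^{-1}(E\cx)$ to be set-theoretically supported on $C$. In particular $H^{-1}(E\cx)$ is torsion, so $\ch_0(H^{-1}(E\cx))=0$; and regardless of whether the support is $0$- or $1$-dimensional, $\ch_1(H^{-1}(E\cx))$ is a non-negative integer multiple of $[C]$. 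Because $f$ contracts $C$ to the point $P$, the projection formula yields $[C]\cdot f^*\omega = f_*[C]\cdot \omega = 0$, hence $\ch_1(H^{-1}(E\cx)) \cdot f^*\omega = 0$.

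Combining these, $\ch_0(E\cx) = \ch_0(H^0(E\cx))$ and $\ch_1(E\cx) \cdot f^*\omega = \ch_1(H^0(E\cx)) \cdot f^*\omega$. If $\ch_0(H^0(E\cx)) \neq 0$, then $H^0(E\cx)$ has nonzero rank and the two slope formulas coincide term by term, giving $\mu_{f^*\omega}(E\cx) = \nu_{f^*\omega}(H^0(E\cx))$. Otherwise $\ch_0(H^0(E\cx))=0$, so $H^0(E\cx)$ is a nonzero torsion sheaf and $\nu_{f^*\omega}(H^0(E\cx))=\infty$; simultaneously $\ch_0(E\cx)=0$ and $E\cx \neq 0$ (since $H^0(E\cx)\neq 0$), and the conventions in the definition of $\mu_{f^*\omega}$ yield $\mu_{f^*\omega}(E\cx)=\infty$ as well.

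The lemma is essentially a bookkeeping exercise, and there is no serious obstacle. The only substantive step is the observation that $f_*H^{-1}(E\cx)=0$ forces support inside $C$, which is immediate from $f$ being an isomorphism away from $C$; once that is established, the vanishing $[C]\cdot f^*\omega = 0$ takes care of the rest.
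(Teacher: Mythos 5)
Your proposal is correct and follows essentially the same route as the paper: additivity of Chern characters on the triangle $H^{-1}(E\cx)[1] \to E\cx \to H^0(E\cx)$, together with the observation that $H^{-1}(E\cx) \in \CF_{-1}$ is supported on $C$, so $\ch_0(H^{-1}(E\cx))=0$ and $\ch_1(H^{-1}(E\cx))\cdot f^*\omega=0$. You also spell out the torsion case where both slopes are $\infty$, which the paper leaves implicit.
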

\begin{proof}
	This follows from the fact that $\ch(E\cx)=\ch(H^0(E\cx))-\ch(H^{-1}(E\cx))$. Since $H^{-1}(E\cx)$ is supported on $C$, $\ch_0(H^{-1}(E\cx))=0$ and $\ch_1(H^{-1}(E\cx))\cdot f^*\omega=0$.
\end{proof}

\begin{prop}
	\label{prop:slopes}
	Let $E\cx$ be a perverse sheaf such that $H^0(E\cx) \neq 0$.
	\begin{enumerate}
		\item $H^0(E\cx) \in \CT_{f^*\omega}^0$ if and only if $E\cx \in ^{-1}\CT_{f^*\omega}$. 
		\item $H^0(E\cx) \in \CF_{f^*\omega}^0$ if and only if $E\cx \in ^{-1}\CF_{f^*\omega}$. 
	\end{enumerate}
\end{prop}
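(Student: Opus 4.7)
The plan is to use Lemma \ref{lem:slopes} to translate between the slope $\mu_{f^*\omega}$ on $\Per(X/Y)$ and the slope $\nu_{f^*\omega}$ on $\Coh(X)$, and to match subobjects and quotients of $E\cx$ in $\Per(X/Y)$ with sub/quotient sheaves of $H^0(E\cx)$ in $\Coh(X)$. The key observations are: (i) every $E\cx \in \Per(X/Y)$ sits in a canonical short exact sequence
$$0 \to H^{-1}(E\cx)[1] \to E\cx \to H^0(E\cx) \to 0$$
in $\Per(X/Y)$, since $H^{-1}(E\cx) \in \CF_{-1}$ and $H^0(E\cx) \in \CT_{-1}$; and (ii) sheaves in $\CF_{-1}$ are supported on $C$ (as $f$ is an isomorphism off $C$), so they contribute zero to both $\ch_0$ and $f^*\omega \cdot \ch_1$, using $f^*\omega \cdot C = 0$. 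In particular, quotienting any sheaf by a subsheaf supported on $C$ preserves $\nu_{f^*\omega}$.

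For part (1), the direction $\Rightarrow$ proceeds as follows. Given a quotient $E\cx \twoheadrightarrow S\cx$ in $\Per(X/Y)$, the long exact sequence of sheaf cohomology (together with $H^1$ of the kernel vanishing, since the kernel lies in $\Per(X/Y)$) yields a surjection $H^0(E\cx) \twoheadrightarrow H^0(S\cx)$ in $\Coh(X)$. If $H^0(S\cx) \neq 0$ then Lemma \ref{lem:slopes} and the hypothesis $H^0(E\cx) \in \CT_{f^*\omega}^0$ give $\mu_{f^*\omega}(S\cx) = \nu_{f^*\omega}(H^0(S\cx)) > 0$; if $H^0(S\cx) = 0$, then $S\cx$ is a shift of a sheaf on $C$ with $\ch_0 = 0$, so $\mu_{f^*\omega}(S\cx) = \infty$. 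For the direction $\Leftarrow$, I first verify that $\CT_{-1}$ is closed under quotients in $\Coh(X)$: right-exactness of $R^1 f_*$ (since $R^2 f_* = 0$ for a birational morphism of surfaces) kills $R^1 f_*$ on quotients, and any nonzero morphism from a quotient to an object of $\CC$ composes with the surjection to contradict $\Hom(-,\CC) = 0$. Hence any surjection $H^0(E\cx) \twoheadrightarrow S$ in $\Coh(X)$ is automatically one in $\Per(X/Y)$, so composing with $E\cx \twoheadrightarrow H^0(E\cx)$ and applying the hypothesis plus Lemma \ref{lem:slopes} gives $\nu_{f^*\omega}(S) > 0$.

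Part (2) is dual. For $\Rightarrow$, given $F\cx \hookrightarrow E\cx$ in $\Per(X/Y)$ with cokernel $Q\cx$, the long exact sequence shows the kernel of $H^0(F\cx) \to H^0(E\cx)$ is a quotient of $H^{-1}(Q\cx) \in \CF_{-1}$, hence supported on $C$; so the image is a subsheaf of $H^0(E\cx)$ with the same rank and $f^*\omega$-intersection as $H^0(F\cx)$. By $H^0(E\cx) \in \CF^0_{f^*\omega}$ this image has $\nu_{f^*\omega} \le 0$, and Lemma \ref{lem:slopes} gives $\mu_{f^*\omega}(F\cx) \le 0$. For $\Leftarrow$, a subsheaf $G \hookrightarrow H^0(E\cx)$ in $\Coh(X)$ yields a quotient $H^0(E\cx) \twoheadrightarrow H^0(E\cx)/G$ in $\CT_{-1}$ (by the closure under quotients above), hence a surjection in $\Per(X/Y)$. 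The kernel of the composition $E\cx \to H^0(E\cx) \to H^0(E\cx)/G$ is a subobject $\tilde G\cx \hookrightarrow E\cx$ in $\Per(X/Y)$ with $H^0(\tilde G\cx) = G$, so the hypothesis $E\cx \in {}^{-1}\CF_{f^*\omega}$ combined with Lemma \ref{lem:slopes} gives $\nu_{f^*\omega}(G) \le 0$. The main obstacle is the closure of $\CT_{-1}$ under quotients, which is the linchpin of both $\Leftarrow$ directions; beyond this, the remaining work is careful bookkeeping with the long exact sequence of sheaf cohomology and handling the degenerate cases where some cohomology object is supported on $C$, so has $\ch_0 = 0$ and infinite slope.
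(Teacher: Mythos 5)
Both ``only if'' directions of your argument (passing from the hypothesis on $E\cx$ to a statement about \emph{all} sheaf-theoretic quotients, resp.\ subsheaves, of $H^0(E\cx)$) rest on the claim that a surjection of sheaves $H^0(E\cx) \twoheadrightarrow S$ between objects of $\CT_{-1}$ is automatically a surjection in $\Per(X/Y)$, and this claim is false. Closure of $\CT_{-1}$ under quotients (which you prove correctly) only guarantees that the target $S$ is again a perverse sheaf; it does not make the map an epimorphism in the abelian category $\Per(X/Y)$. That requires the kernel to lie in $\Per(X/Y)$: the shifted cone $P\cx$ of $E\cx \to H^0(E\cx) \to S$ has $H^0(P\cx) = K := \ker(H^0(E\cx)\to S)$, a \emph{subsheaf} of $H^0(E\cx)$, and $\CT_{-1}$ is not closed under subsheaves because the condition $\Hom(-,\CC)=0$ is not inherited by them. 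Concretely, for $z \in C$ the surjection $\CO_X \twoheadrightarrow \CO_z$ has both terms in $\CT_{-1}$, but its kernel $I_z$ admits a nonzero map onto $\CO_C(-1)$ (restrict to $C$), so $I_z \notin \CT_{-1}$ and the map is not an epimorphism in $\Per(X/Y)$ --- its perverse cokernel is a nonzero object of $\CF_{-1}[1]$. The same defect appears in your part (2): the object $\tilde G\cx$ with $H^0(\tilde G\cx)=G$ need not lie in $\Per(X/Y)$, hence is not a subobject of $E\cx$ there. Since the hypothesis $E\cx \in {}^{-1}\CT_{f^*\omega}$ (resp.\ ${}^{-1}\CF_{f^*\omega}$) only constrains genuine quotients (resp.\ subobjects) in $\Per(X/Y)$, your argument does not reach the arbitrary sheaf quotient $S$ (resp.\ subsheaf $G$).

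The paper's proof is your outline plus precisely the repair of this step. Given the sheaf surjection with kernel $K$, decompose $K$ by the torsion pair as $0 \to T \to K \to F \to 0$ with $T \in \CT_{-1}$, $F \in \CF_{-1}$, and replace $S$ by $S' = H^0(E\cx)/T$. The kernel of $E\cx \to S'$ now has $H^0 \cong T \in \CT_{-1}$ and $H^{-1} \cong H^{-1}(E\cx) \in \CF_{-1}$, so it lies in $\Per(X/Y)$ and $E\cx \twoheadrightarrow S'$ holds there; since $F$ is supported on $C$, one has $\ch_0(S')=\ch_0(S)$ and $f^*\omega\cdot\ch_1(S')=f^*\omega\cdot\ch_1(S)$, so $S'$ has the same slope as $S$ and the hypothesis applies. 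Dually, in part (2) one replaces the subsheaf $G$ by its $\CT_{-1}$-part $T$, which has the same slope and does induce a subobject of $E\cx$ in $\Per(X/Y)$. Your forward directions and the use of Lemma \ref{lem:slopes} agree with the paper and are fine.
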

\begin{proof}
	Suppose first that $H^0(E\cx)$ is in $\CT_{f^*\omega}^0$. Because perverse sheaves have cohomology only in degrees $-1$ and $0$, for any quotient $S\cx$ of $E\cx$ in $\Per(X/Y)$, $H^0(E\cx)$ surjects onto $H^0(S\cx)$. All quotient sheaves of $H^0(E\cx)$ have positive slope. This implies that $\mu_{f^*\omega}(S\cx)=\nu_{f^*\omega}(H^0(S\cx))>0$, and $E\cx$ is in $^{-1}\CT_{f^*\omega}$. 
	
	Now suppose that $E\cx \in ^{-1}\CT_{f^*\omega}$. Let $H^0(E\cx) \rightarrow S$ be a surjective map of coherent sheaves. Then $S$ is necessarily also in $\CT_{-1}$, that is, $S$ is a perverse sheaf. However, the map $E\cx \rightarrow H^0(E\cx) \rightarrow S$ may not be a surjection in $\Per(X/Y)$. That is, if $P\cx$ is the kernel of the composition, fitting into exact triangle
	\begin{align}
	\label{seq1}
	P\cx \rightarrow E\cx \rightarrow S,
	\end{align}
	it may be that $P\cx$ is not in $\Per(X/Y)$, since $H^0(P\cx)$ need not be in $\CT_{-1}$. We will now construct a perverse sheaf $S'$ such that $\mu_{f^*\omega}(S')=\mu_{f^*\omega}(S)$ and $E\cx \twoheadrightarrow S'$, proving that $\mu_{f^*\omega}(E\cx)>0$. 
	
	Since $H^0(P\cx)$ is a sheaf, there exist sheaves $T \in \CT_{-1}$ and $F \in \CF_{-1}$ so that 
	\begin{align}
	\label{seq2}
	0 \rightarrow T \rightarrow H^0(P\cx) \rightarrow F \rightarrow 0
	\end{align}
	is exact. Further, since $F$ is supported on $C$, $\ch_0(F)=H\cdot \ch_1(F)=0$ and $\nu_{f^*\omega}(T)=\nu_{f^*\omega}(H^0(P\cx))$. There is an injective map of sheaves $T \rightarrow H^0(E\cx)$ composing the injective maps $T \rightarrow H^0(P\cx) \rightarrow H^0(E\cx)$. Let $S'$ be the quotient sheaf of this map, fitting into exact sequence
	\begin{align}
	\label{seq3}
	0 \rightarrow T \rightarrow H^0(E\cx)\rightarrow S'\rightarrow 0.
	\end{align}
	Again, $S'$ is also necessarily a perverse sheaf. We also claim that $\mu_{f^*\omega}(S')=\mu_{f^*\omega}(S)$. The sequence (\ref{seq1}) gives rise to a long exact sequence of sheaves
	$$0 \rightarrow H^{-1}(P\cx) \rightarrow H^{-1}(E\cx)\rightarrow 0 \rightarrow H^0(P\cx)\rightarrow H^0(E\cx)\rightarrow S\rightarrow 0.$$ We can conclude by additivity of chern characters that 
	\begin{align}
	\label{seq4}
	\ch_0(H^0(E\cx))=\ch_0(S)+\ch_0(H^0(P\cx)).
	\end{align}
	Sequence (\ref{seq2}) shows that $\ch_0(H^0(P\cx))=\ch_0(T)$. Thus we can rewrite equation (\ref{seq4}) as 
	\begin{align}
	\label{seq5}
	\ch_0(H^0(E\cx))=\ch_0(S)+\ch_0(T).
	\end{align}
	But taking the long exact sequence of (\ref{seq3}) we have $$\ch_0(H^0(E\cx))=\ch_0(S')+\ch_0(T).$$ Hence, $\ch_0(S)=\ch_0(S')$. Note that equations (\ref{seq4}) and (\ref{seq5}) can also be written for $\ch_1$, to show that $\ch_1(S)=\ch_1(S')$. Thus we have shown that $\mu_{f^*\omega}(S)=\mu_{f^*\omega}(S')$.

	We will now show that the composition $E \cx \rightarrow H^0(E\cx) \rightarrow S'$ is surjective in $\Per(X/Y)$. Let $Q\cx$ be the kernel of the composition $E \cx \rightarrow H^0(E\cx) \rightarrow S'$, fitting into exact triangle 
	$$Q \cx \rightarrow E\cx \rightarrow S'.$$ Note that $E$ surjects onto $S'$ in $\Per(X/Y)$ if and only if $Q\cx \in \Per(X/Y)$. Taking long exact cohomology, $H^{-1}(Q\cx) \cong H^{-1}(E\cx)$ which is in $\CF_{-1}$, and $H^0(Q\cx) \cong T$ which is in $\CT_{-1}$. Then $Q\cx \in \Per(X/Y)$, and so $E\cx \rightarrow S'$ is surjective in $\Per(X/Y)$. This implies $\nu_{f^*\omega}(S)=\mu_{f^*\omega}(S')>0$. 
	
	We will now prove the second statement. Suppose $H^0(E\cx) \in \CF_{f^*\omega}^0$, and $F\cx \rightarrow E\cx$ is an injection in $\Per(X/Y)$ with quotient $G$. There is a long exact cohomology sequence 
	\begin{center}
		\begin{tikzpicture}
		\matrix (m) [matrix of math nodes,column sep=1em, row sep=2em]
		{0 & & H^{-1}(F\cx) & & H^{-1}(E\cx) & & H^{-1}(G\cx) & & H^0(F\cx) & & H^0(E\cx) & & H^0(G\cx) & & 0. \\
			& & & & & & & K & & I & & & & & \\ };
		
		\path[-stealth]
		(m-1-1) edge (m-1-3) 
		(m-1-3) edge (m-1-5) 
		(m-1-5) edge (m-1-7) 
		(m-1-7) edge (m-1-9)
		(m-1-9) edge (m-1-11)
		(m-1-11) edge (m-1-13)
		(m-1-13) edge (m-1-15)
		(m-1-7) edge (m-2-8)
		(m-2-8) edge (m-1-9)
		(m-1-9) edge (m-2-10)
		(m-2-10) edge (m-1-11);
		\end{tikzpicture}
	\end{center}
	Since $K$ is a quotient of $H^{-1}(G)$ it must be supported on $C$. This implies that $\ch_0(K)=H\cdot \ch_1(K)=0$. Then $\mu_{f^*\omega}(F\cx)=\nu_{f^*\omega}(H^0(F\cx))=\nu_{f^*\omega}(I) \leq 0$ since $I$ is a subsheaf of $H^0(E\cx)$. 
	
	Now suppose $E\cx \in ^{-1}\CF_{f^*\omega}$. Let $S \rightarrow H^0(E\cx)$ be an injective morphism of sheaves. We will construct a perverse sheaf $R\cx$ which injects into $E\cx$ so that $\mu(Q\cx)=\mu(E\cx)$. Since $S$ is a sheaf, it fits into an exact sequence $$0 \rightarrow T \rightarrow S \rightarrow F \rightarrow 0$$ where $T \in \CT_{-1}$ and $F \in \CF_{-1}$. Since $\ch_0(F)=H\cdot \ch_1(F)=0$, $\nu_{f^*\omega}(T)=\nu_{f^*\omega}(S)$. Composing the injective maps $T \rightarrow S \rightarrow H^0(E\cx)$, we get an exact sequence $$0 \rightarrow T \rightarrow H^0(E\cx) \rightarrow Q \rightarrow 0$$ for some $Q \in {\rm Coh}(X)$. $H^0(E\cx)$ is a perverse sheaf, and so $Q \in \CT_{-1}$ is a perverse sheaf. 
	
	We have morphisms $E\cx \rightarrow H^0(E\cx) \rightarrow Q$. Let $R\cx$ be the kernel of the composition, fitting into exact triangle
	$$R\cx \rightarrow E\cx \rightarrow Q.$$ Taking the long exact cohomology sequence we see $H^{-1}(R\cx) \cong H^{-1}(E\cx) \in \CF_{-1}$ and $H^0(R\cx) \cong T \in \CT_{-1}$, so $R\cx \in \Per(X/Y)$. This means that $R\cx \rightarrow E\cx$ is an injective morphism in $\Per(X/Y)$. And so $0 \geq \mu_{f^*\omega}(R\cx)=\nu_{f^*\omega}(T)=\nu_{f^*\omega}(S)$.   
	
\end{proof}

Although Proposition \ref{prop:slopes} did not address perverse sheaves $E\cx$ for which $H^0(E\cx)=0$, it is easy to see that the slope $\mu_{f^*\omega}(E\cx)$ of such a perverse sheaf is $\infty$, and that in this case $E\cx \in ^{-1}\CT_{f^*\omega}$.

\begin{thm}
	\label{thm:tiltorder}
	$\CB_{f^*\omega}=\CB_{f^*\omega,-1}^0$.
\end{thm}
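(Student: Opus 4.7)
The plan is to establish the containment $\CB_{f^*\omega} \subseteq \CB_{f^*\omega,-1}^0$ and then invoke the general fact that if $\CA_1 \subseteq \CA_2$ are both hearts of bounded t-structures in $\CDb(X)$ then $\CA_1 = \CA_2$. (This follows from an $\Ext^{<0}$-vanishing argument: for $Y \in \CA_2$, each $\CA_1$-cohomology object $H^i_{\CA_1}(Y) \in \CA_1 \subseteq \CA_2$ contributes a shift $H^i_{\CA_1}(Y)[-i] \in \CA_2[-i]$, and nonzero maps between such shifts and $Y \in \CA_2$ force $i = 0$, so $Y \in \CA_1$.) Since each heart is the extension closure of its torsion and shifted torsion-free parts, it then suffices to check that both ${}^{-1}\CT_{f^*\omega} \subseteq \CB_{f^*\omega,-1}^0$ and ${}^{-1}\CF_{f^*\omega}[1] \subseteq \CB_{f^*\omega,-1}^0$.

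First I would pin down ${}^{-1}\CF_{f^*\omega}$ concretely: for a nonzero $F\cx$ in this subcategory, if $H^{-1}(F\cx) \neq 0$ then $H^{-1}(F\cx)[1]$ is a $\Per(X/Y)$-subobject of $F\cx$ with $\ch_0 = 0$ and hence slope $+\infty$, violating the defining inequality of ${}^{-1}\CF_{f^*\omega}$. Thus $F\cx$ is a sheaf $F$, which by Proposition \ref{prop:slopes}(2) lies in $\CT_{-1} \cap \CF_H^0$. Since $F \in \CF_H^0$, the shift $F[1]$ belongs to $\CA_H^0$ at position $0$ of that heart, and the vanishing $\Hom(F[1], \CO_C(i)) = \Ext^{-1}(F, \CO_C(i)) = 0$ shows immediately that $F[1] \in \CT_{f^*\omega,-1}^0 \subseteq \CB_{f^*\omega,-1}^0$.

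For the other generator, let $E\cx \in {}^{-1}\CT_{f^*\omega}$ and apply the triangle $H^{-1}(E\cx)[1] \to E\cx \to H^0(E\cx)$. Since $H^{-1}(E\cx) \in \CF_{-1}$ consists of torsion sheaves it lies in $\CT_H^0$, and by Proposition \ref{prop:slopes}(1) so does $H^0(E\cx)$; consequently both cohomology sheaves lie in $\CA_H^0$ and a long exact sequence chase gives $H^{-1}_{\CA_H^0}(E\cx) = H^{-1}(E\cx)$ and $H^0_{\CA_H^0}(E\cx) = H^0(E\cx)$. By Lemma \ref{lem:freeparts}, $H^{-1}(E\cx) \in \CF_{-1} = \CF_{f^*\omega,-1}^0$. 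To upgrade $H^0(E\cx)$ to an object of $\CT_{f^*\omega,-1}^0$ requires $\Hom(H^0(E\cx), \CO_C(i)) = 0$ for $i \leq -1$; any nonzero morphism would factor through a surjection onto a subsheaf $\CO_C(j) \subseteq \CO_C(i)$ (subsheaves of line bundles on $\BP^1$ are line bundles), producing a quotient $\CO_C(j)$ with $j \leq -1$, contradicting the facts that $\CT_{-1}$ is closed under quotients and contains no $\CO_C(j)$ for $j \leq -1$.

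The main obstacle is the cohomology bookkeeping in the last step: a priori the $\CA_H^0$-cohomology of a perverse sheaf might spill outside degrees $\{-1, 0\}$, and it is the combination of Proposition \ref{prop:slopes} together with the torsion nature of $\CF_{-1}$ that forces the collapse. A secondary subtlety is passing from nonzero morphisms to surjections onto $\CO_C(i)$, which relies on the classification of subsheaves of a line bundle on $\BP^1$ as sub-line-bundles.
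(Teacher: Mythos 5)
Your proof is correct, but it establishes the opposite containment from the one the paper checks, so the two arguments are mirror images rather than the same computation. The paper proves $\CB_{f^*\omega,-1}^0 \subseteq \CB_{f^*\omega}$, decomposing an object via the torsion pair $(\CT_{f^*\omega,-1}^0, \CF_{f^*\omega,-1}^0)$ and then handling the $\CT_{f^*\omega,-1}^0$-piece by a further decomposition with respect to $(\CT_{-1},\CF_{-1})$; the delicate point there is an octahedral-axiom argument showing that the $\CF_{-1}$-quotient $R_0$ of $H^0(T\cx)$ vanishes. You instead prove $\CB_{f^*\omega} \subseteq \CB_{f^*\omega,-1}^0$, decomposing via $({}^{-1}\CT_{f^*\omega}, {}^{-1}\CF_{f^*\omega})$, and the work is concentrated in two places the paper never needs: the observation that every object of ${}^{-1}\CF_{f^*\omega}$ is a sheaf (since a nonzero $H^{-1}(F\cx)[1]$ would be a rank-zero, hence slope-$\infty$, subobject in $\Per(X/Y)$), and the image-factorization argument giving $\Hom(H^0(E\cx),\CO_C(i))=0$ for $i\leq -1$, which uses that subsheaves of a line bundle on $C\cong\BP^1$ are line bundles together with the facts that $\CT_{-1}$ is closed under quotients and contains no $\CO_C(j)$ with $j\leq -1$. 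Both directions rest on the same inputs (Lemma \ref{lem:freeparts}, Proposition \ref{prop:slopes}, and the principle that a containment of hearts of bounded t-structures is an equality); your direction yields a cleaner description of the generators of Toda's heart, at the cost of the extra bookkeeping identifying the $\CA_{f^*\omega}^0$-cohomology of a perverse sheaf with its sheaf cohomology, which you carry out correctly.
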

\begin{proof}
	It suffices to show that $\CB_{f^*\omega,-1}^0 \subset \CB_{f^*\omega}$, since each is a heart of a bounded t-structure. Suppose $E\cx \in \CB_{f^*\omega,-1}^0$. Then there is an exact triangle $$F[1] \rightarrow E\cx \rightarrow T\cx$$ where $F$ is in $\CF_{f^*\omega,-1}^0$ and $T\cx \in \CT_{f^*\omega,-1}^0$. By Lemma \ref{lem:freeparts}, $\CF_{f^*\omega,-1}^0=\CF_{-1}$. The sheaves in $\CF_{-1}$ are torsion, so $\mu_{f^*\omega}(F[1])=\infty$ and $F[1] \in ^{-1}\CT_{f^*\omega}\subset \CB_{f^*\omega}$. 
	
	It remains to show that $T\cx \in \CB_{f^*\omega}$. Since $T\cx \in \CT_{f^*\omega,-1}^0$, it is contained in $\CA_{f^*\omega}^0$. This means there is an exact triangle $$H^{-1}(T\cx)[1] \rightarrow T\cx \rightarrow H^0(T\cx)$$ with $H^0(T\cx) \in \CT_{f^*\omega}^0$ and $H^{-1}(T\cx) \in \CF_{f^*\omega}^0$. We will now show that $H^{-1}(T\cx)$ and $H^0(T\cx)$ also lie in $\CB_{f^*\omega}$. 
	
	There is an exact sequence $$0 \rightarrow S_{-1} \rightarrow H^{-1}(T\cx) \rightarrow R_{-1} \rightarrow 0$$ with $S_{-1} \in \CT_{-1}$ and $R_{-1} \in \CF_{-1}$. Clearly $R_{-1}[1] \in \CB_{f^*\omega}$. Since $S_{-1}$ is a subsheaf of $H^{-1}(T\cdot)$, $S_{-1} \in \CT_{-1} \cap \CF_{f^*\omega}$, and so $S_{-1}[1] \in \CB_{f^*\omega}$ as well. Thus $H^{-1}(T\cx)[1] \in \CB_{f^*\omega}$. 
	
	Similarly, there is an exact sequence  $$0 \rightarrow S_{0} \rightarrow H^{0}(T\cx) \rightarrow R_{0} \rightarrow 0$$ with $S_{0} \in \CT_{-1}$ and $R_{0 } \in \CF_{-1}$. We know there are no nonzero maps $T\cx \rightarrow R_0$. So then if $R_0$ is nonzero, we get an exact triangle $H^{-1}(T\cx)[2] \rightarrow C\cx \rightarrow S_0[1]$ by the octahedral axiom, where $C\cx$ is the cone of the $0$ map from $T\cx \rightarrow R_0$. Taking the long exact sequence of cohomology we find that $H^0(C\cx)=0$. But since this is the cone of the zero morphism, $H^0(C\cx)\cong R_0$. This shows that $H^0(T\cx) \cong S_0 \in \CT_{-1}\cap \CT_{f^*\omega}$. Thus $H^0(T\cx) \in \CB_{f^*\omega}$. 
\end{proof}

\section{Central charge corresponding to $\CB_{H,k}^a$}

Suppose now that $C$ is a curve on the smooth projective surface $X$ with $C^2=-n$. Suppose further that there is a nef divisor $H$ on $X$ so that $C \in H^{\perp}$, but $H\cdot C'>0$ for all curves $c' \subseteq X$ so that $C' \not\subseteq C$. 

Let $z \in \BC$ and let $\beta \in {\rm NS}_{\BR}(X)$ so that $\beta \cdot H=0$. We want to define a central charge $$Z_{H,\beta}(E)=-\ch_2(E)+iH\ch_1(E)+\beta \ch_1(E)+z \ch_0(E)$$ on $\CDb(X)$. We will now show that the pair $(Z_{H,\beta}, \CB_{H,k}^{-\Imag(z)})$ is a stability condition if $k$ and $z$ satisfy $k+\frac{n}{2} <\beta \cdot C <k+\frac{n}{2}+1$, $\Real(z)>0$, and $\Real(z)+\frac{\Imag(z)^2}{H^2} > -\frac{\beta^2}{2}$. 

If $\beta \cdot C -\frac{n}{2}$ is an integer, then no such $k$ will exist. However, this problem can be avoided by simply scaling the class $\beta$. In fact, so long as $\beta \cdot C \neq 0$, then by replacing $\beta$ with $\left( 1-\frac{k+1}{\beta\cdot C} \right) \beta$, we can always choose $k$ to be $-1$. However, we will continue in more generality.

\begin{thm}[\cite{BG} \cite{BG2}]
	\label{lem:bg}
	For any Gieseker stable sheaf $E$ on $X$ which is torsion-free, $\ch_1(E)^2 \geq 2\ch_0(E) \ch_2(E)$.
\end{thm}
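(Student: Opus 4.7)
The plan is to sketch Bogomolov's classical argument. First, I reduce to the case of $\mu_H$-stable locally free sheaves. Gieseker stability implies $\mu_H$-semistability, and the discriminant $\Delta(E) := \ch_1(E)^2 - 2\ch_0(E)\ch_2(E)$ interacts well with the $\mu_H$-Jordan--H\"older filtration, so it suffices to prove the inequality for each $\mu_H$-stable factor. Then I replace $E$ by its reflexive hull $E^{\vee\vee}$; since $X$ is a smooth surface, reflexive sheaves are locally free, and since $E^{\vee\vee}/E$ is supported on points, we have $\ch_0(E) = \ch_0(E^{\vee\vee})$ and $\ch_1(E) = \ch_1(E^{\vee\vee})$ while $\ch_2(E^{\vee\vee}) \geq \ch_2(E)$, giving $\Delta(E) \geq \Delta(E^{\vee\vee})$. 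So it is enough to treat $\mu_H$-stable locally free $E$.

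For such $E$ of rank $r$, the key construction is the trace-free endomorphism bundle $F := \operatorname{End}_0(E)$. A direct Chern class computation using $\operatorname{ch}(E \otimes E^{\vee}) = \operatorname{ch}(E)\operatorname{ch}(E^{\vee})$ shows that $c_1(F) = 0$ and $c_2(F) = 2r \ch_2(E) \cdot (-1) + \ch_1(E)^2 = \Delta(E) \cdot (-1)$ up to sign, and after accounting for the decomposition $\operatorname{End}(E) = F \oplus \CO_X$ one finds $c_2(F)$ is proportional to $\Delta(E)$ with a definite sign. Stability of $E$ forces every global endomorphism to be scalar, so $H^0(F) = 0$; and by stability of tensor products of polystable bundles in characteristic zero (Ramanan--Ramanathan), the same vanishing holds for $H^0(\operatorname{Sym}^m F)$ for every $m \geq 1$. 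Applying Hirzebruch--Riemann--Roch to $\operatorname{Sym}^m F$, the leading-order term in $m$ is a positive multiple of $-\Delta(E)$, so if $\Delta(E) < 0$ the Euler characteristic would grow, forcing $\operatorname{Sym}^m F$ to have sections for $m \gg 0$ and contradicting the vanishing of $H^0$.

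The main obstacle is controlling $H^2(\operatorname{Sym}^m F)$ in this asymptotic analysis, since Euler characteristic alone does not immediately bound $h^0$. By Serre duality, $H^2(\operatorname{Sym}^m F) \cong H^0(\operatorname{Sym}^m F \otimes K_X)^{*}$, and one needs a uniform bound showing this grows strictly slower than the leading term of $\chi(\operatorname{Sym}^m F)$. This is established by slope considerations: since $c_1(\operatorname{Sym}^m F) = 0$, the $H$-slope of $\operatorname{Sym}^m F \otimes K_X$ is controlled purely by $K_X \cdot H$, independent of $m$, so stability forces sublinear growth on sections of the twist. Combining with the leading-order HRR estimate yields $\Delta(E) \geq 0$. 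A cleaner modern alternative, which avoids the delicate asymptotic bookkeeping, is Langer's proof via reduction to general high-degree curves combined with Frobenius pullbacks in positive characteristic.
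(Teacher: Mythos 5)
First, a point of comparison: the paper offers no proof of this statement at all --- it is Bogomolov's inequality, imported wholesale from the cited references --- so your sketch can only be judged against the classical argument. Your reduction steps are correct: Gieseker stability implies $\mu_H$-semistability; the discriminant $\Delta(E)=\ch_1(E)^2-2\ch_0(E)\ch_2(E)$ is superadditive over the $\mu_H$-Jordan--H\"older factors because the cross-terms $\bigl(r_j c_1(E_i)-r_i c_1(E_j)\bigr)^2$ are nonpositive by the Hodge index theorem (these classes are $H$-orthogonal since the factors share a slope); and passing to $E^{\vee\vee}$ only decreases $\Delta$. The Chern class computation $c_1(\operatorname{End}_0 E)=0$, $c_2(\operatorname{End}_0 E)=\Delta(E)$ is also right, and Riemann--Roch does give $\chi(\operatorname{Sym}^m\operatorname{End}_0 E)\to+\infty$ when $\Delta(E)<0$.

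The genuine error is the claim that $H^0(\operatorname{Sym}^m F)=0$ for all $m\geq 1$, where $F=\operatorname{End}_0(E)$. This fails already for $m=2$: the trace pairing $(A,B)\mapsto\operatorname{tr}(AB)$ is a fiberwise nondegenerate symmetric form on $F$, hence identifies $F\cong F^{\vee}$ and produces a canonical nonzero global section of $\operatorname{Sym}^2F$. More generally the $SL_r$-invariants of $\operatorname{Sym}^{m}\mathfrak{sl}_r$ (generated by the coefficients of the characteristic polynomial) form a space of dimension growing like $m^{r-1}$, and each yields an actual section of $\operatorname{Sym}^m F$. What Ramanan--Ramanathan (or the Hermite--Einstein metric on the stable bundle $E$) gives you is semistability of $\operatorname{Sym}^m F$ at slope zero, which \emph{bounds} $h^0$; it does not kill it. The repair is exactly the estimate you already invoke for $H^2$: for a $\mu_H$-semistable sheaf $G$ with $\mu_{\max}(G)$ bounded, one has $h^0(G)\leq C\cdot\operatorname{rk}(G)$ with $C$ independent of $c_2$ and of $m$. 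Applying this to both $\operatorname{Sym}^m F$ and $\operatorname{Sym}^m F\otimes K_X$ gives $h^0+h^2=O\bigl(\operatorname{rk}\operatorname{Sym}^m F\bigr)$, whereas if $\Delta(E)<0$ then $\chi(\operatorname{Sym}^m F)$ grows like $m^2\cdot\operatorname{rk}(\operatorname{Sym}^m F)$, so the contradiction survives. In short, the architecture is the standard and correct one, but the vanishing statement as written is false and must be replaced by this uniform linear-in-rank bound (and your ``sublinear growth'' phrasing for the $H^2$ term should be made precise in the same way).
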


\begin{lem}
	\label{lem:stabfun}
	The function $Z_{H,\beta}$ is a stability function on $\CB_{H,k}^{-\Imag(z)}$, when $k$ is chosen so that $k+\frac{n}{2}<\beta \cdot C < k+\frac{n}{2}+1$ and $\Real{z}+\frac{\Imag{z}^2}{H^2}>-\frac{\beta^2}{2}$.
\end{lem}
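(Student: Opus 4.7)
The plan is to verify the two defining properties of a stability function on $\CB_{H,k}^{-\Imag(z)}$: that $\Imag Z_{H,\beta}(E\cx) \ge 0$ for every nonzero $E\cx$ in the heart, and that $\Real Z_{H,\beta}(E\cx) < 0$ whenever $\Imag Z_{H,\beta}(E\cx) = 0$. Since $\CB_{H,k}^{-\Imag(z)}$ is built by two successive tilts ($\Coh(X) \rightsquigarrow \CA_H^{-\Imag(z)} \rightsquigarrow \CB_{H,k}^{-\Imag(z)}$), every nonzero object of the heart sits in iterated distinguished triangles whose atomic pieces are of three types: (i)~the torsion sheaves $\CO_C(i)[1]$ with $i \le k$ that generate $\CF_{H,k}^{-\Imag(z)}[1]$, (ii)~shifts $F[1]$ of torsion-free sheaves $F \in \CF_H^{-\Imag(z)}$ (coming from the $\CA_H^{-\Imag(z)}$-component), and (iii)~sheaves $T \in \CT_H^{-\Imag(z)}$. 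Because $\Imag Z$ and $\Real Z$ are additive on triangles and the stability function axiom is preserved under extensions, it suffices to check the condition on each atomic piece, noting that any Hom-vanishing imposed by $\CT_{H,k}^{-\Imag(z)}$ is inherited by quotients of type (iii), since surjections in $\CA_H^{-\Imag(z)}$ transport nonzero morphisms to $\CO_C(i)$.

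Types (i) and (iii) yield to direct calculation. For $\CO_C(i)[1]$, Riemann--Roch on $C \cong \BP^1$ together with $K_X \cdot C = n - 2$ gives $\ch_2(\CO_C(i)) = i + n/2$, hence
\[
Z_{H,\beta}(\CO_C(i)[1]) = (i + n/2) - \beta\cdot C < 0
\]
by the hypothesis $\beta\cdot C > k + n/2 \ge i + n/2$. For $T \in \CT_H^{-\Imag(z)}$ I split by the rank and support of $T$: torsion-free $T$ gives $\Imag Z(T) > 0$ from $\nu_H(T) > -\Imag(z)$; torsion $T$ with $H \cdot \ch_1(T) > 0$ gives $\Imag Z(T) > 0$ by nefness of $H$; and the remaining case, zero-dimensional or $C$-supported torsion, has $\Imag Z(T) = 0$, whereupon the inherited Hom-vanishing forces every Jordan--H\"older factor of $T$ of the form $\CO_C(j)$ to satisfy $j \ge k + 1$, and the computation from (i) (combined with $\beta\cdot C < k + n/2 + 1$) yields $\Real Z(T) < 0$.

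The delicate case is (ii). One has $\Imag Z(F[1]) = -H \cdot \ch_1(F) - \Imag(z)\ch_0(F) \ge 0$ because $F \in \CF_H^{-\Imag(z)}$ forces $\nu_H(F) \le -\Imag(z)$; strict inequality settles the matter, so assume $\nu_H(F) = -\Imag(z)$. Standard Harder--Narasimhan and Jordan--H\"older arguments reduce to the case where $F$ is Gieseker stable with $\nu_H(F) = -\Imag(z)$, so that the Bogomolov--Gieseker inequality (Theorem~\ref{lem:bg}) yields $\ch_2(F) \le \ch_1(F)^2/(2r)$ with $r = \ch_0(F) > 0$. Combining this bound with the algebraic identity
\[
\frac{\ch_1(F)^2}{2r} - \beta\cdot \ch_1(F) = \frac{(\ch_1(F) - r\beta)^2}{2r} - \frac{r\beta^2}{2}
\]
and the Hodge index inequality $(\ch_1(F) - r\beta)^2 \le (H\cdot (\ch_1(F) - r\beta))^2/H^2 = \Imag(z)^2 r^2/H^2$ (valid because $H^2 > 0$ and $\beta\cdot H = 0$), one obtains an upper bound for $\Real Z(F[1])$ that is strictly negative under the stated hypothesis $\Real(z) + \Imag(z)^2/H^2 > -\beta^2/2$.

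The principal obstacle is precisely this last estimate: assembling Bogomolov--Gieseker and the Hodge index theorem into a bound sharp enough to match the stated inequality on $z$ requires care about the reduction to a single stable factor, about the validity of Bogomolov for the notion of stability in play when $H$ is only nef rather than ample, and about exploiting $\beta\cdot H = 0$ fully. Once this estimate is in hand, additivity of $Z$ on triangles, together with the calculations for the three atomic pieces, propagates the stability function property to every object of $\CB_{H,k}^{-\Imag(z)}$.
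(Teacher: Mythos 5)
Your proposal follows essentially the same route as the paper's proof: the same reduction of an arbitrary object of $\CB_{H,k}^{-\Imag(z)}$ to the three atomic pieces $\CF_{H,k}^{-\Imag(z)}$, $\CF_H^{-\Imag(z)}$, and $\CT_H^{-\Imag(z)}$, the same computation $Z_{H,\beta}(\CO_C(i))=-(i+\tfrac{n}{2})+\beta\cdot C$ controlling both the $i\le k$ and $i\ge k+1$ cases via the two-sided bound on $\beta\cdot C$, and the same Bogomolov--Gieseker plus Hodge-index estimate (your inequality $(\ch_1-r\beta)^2\le (H\cdot(\ch_1-r\beta))^2/H^2$ is exactly the paper's completion-of-the-square with the shift $\tfrac{\ch_0\Imag(z)}{H^2}H$). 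The points you flag as delicate are the same ones the paper handles, so this is the paper's argument in all essentials.
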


\begin{proof}
	Any $E\cx \in \CB_{H,k}^{-\Imag(z)}$ fits into an exact triangle $$F[1] \rightarrow E\cx \rightarrow T\cx$$ for some $F \in \CF_{H,k}^{-\Imag(z)}$ and some $T\cx \in \CT_{H,k}^{-\Imag(z)}$. Since we have defined $Z_{H,\beta}$ using chern characters, which are additive on exact triangles, it follows that $$Z_{H,\beta}(E\cx)=Z_{H,\beta}(T\cx)-Z_{H,\beta}(F).$$ We have chosen $H$ so that $H \cdot C=0$ and so $\Imag(Z_{H,\beta}(E\cx))=\Imag(Z_{H,\beta}(T\cx))$. But $\Imag(Z_{H,\beta}(T\cx))=\Imag(Z_{H,\beta}(H^0(T\cx)))-\Imag(Z_{H,\beta}(H^{-1}(T\cx)))$. By the construction of $\CA_{H,k}^{-\Imag(z)}$, $\Imag(Z_{H,\beta}(H^0(T\cx))) >0$ and $\Imag(Z_{H,\beta}(H^{-1}(T\cx))) \leq 0$. 
	
	Now we must show that if $\Imag(Z_{H,\beta}(E\cx))=0$, then $\Real(Z_{H,\beta}(E\cx))<0$. 
	Consider the following diagram of short exact sequences in $\CB_{H,k}^{-\Imag(z)}$.
	\begin{center}
		\begin{tikzpicture}
		\matrix (m) [matrix of math nodes,column sep=2em, row sep=2em]
		{ & & H^{-1}(H^0_{\CA}(E\cx))[1] \\
			H^{-1}_{\CA}(E\cx)[1] & E\cx & H^0_{\CA}(E\cx) \\ 
			& & H^{0}(H^0_{\CA}(E\cx)) \\
		};
		
		\path[-stealth]
		(m-2-1) edge (m-2-2)
		(m-2-2) edge (m-2-3)
		(m-1-3) edge (m-2-3)
		(m-2-3) edge (m-3-3);
		\end{tikzpicture}
	\end{center}
	The equation $\Imag(Z_{H,\beta}(E\cx))=0$ holds if and only if the equations $$\Imag(Z_{H,\beta}(H^{-1}_{\CA}(E\cx)))=\Imag(Z_{H,\beta}(H^{-1}(H^0_{\CA}(E\cx)))=\Imag(Z_{H,\beta}(H^{0}(H^0_{\CA}(E\cx)))=0$$ also hold. Further, note that $H^{-1}_{\CA}(E\cx)) \in \CF_{H,k}^{-\Imag(z)}$, $H^{-1}(H^0_{\CA}(E\cx)) \in \CF_{H}^{-\Imag(z)}$, and $H^{0}(H^0_{\CA}(E\cx)) \in \CT_H^{-\Imag(z)}$. Thus we will be proceed by showing that for any sheaves $F \in \CF_{H,k}^{-\Imag(z)}$, $R \in \CT_H^{-\Imag(z)}$, and $S \in \CF_H^{-\Imag(z)}$ such that 
	$$\Imag(Z_{H,\beta}(F))=\Imag(Z_{H,\beta}(R))=\Imag(Z_{H,\beta}(S))=0,$$
	we have that $\Real(Z_{H,\beta}(R))<0$, $\Real(Z_{H,\beta}(F))>0$, and $\Real(Z_{H,\beta}(S))>0$. This will then show that $\Real(Z_{H,\beta})(E\cx)<0$. 
	
	First, suppose $F \in \CF_{H,k}^{-\Imag(z)}$. Note that for each complex $\CO_C(i)$ in $\CF_{H,k}^{-\Imag(z)}$, 
	$$Z_{H,\beta}(\CO_C(i))=-i-\frac{n}{2}+\beta\cdot C.$$ Since $i \leq k$, as long as $k$ is chosen so that $k< \beta \cdot C -\frac{n}{2}$, $\Real(Z_{H,\beta}(\CO_C(i)))>0$. Then since $Z_{H,\beta}$ is additive on exact triangles, $\Real(Z_{H,\beta}(F))>0$.
	
	Now let $R \in \CT_H^{-\Imag(z)}$ be such that $\Imag(Z_{H,\beta}(R))=0$. This implies that $\ch_0(R)=0$. Then $Z_{H,\beta}(R)=-\ch_2(R)+\beta\cdot \ch_1(R)$. Since $\ch_0(R)=0$, $R$ must be supported on either points or curves. If $R$ is supported at points, $\ch_2(R)$ will be positive and $\ch_1(R)=0$, so $Z(R)<0$. If $R$ is supported on a curve, it must be supported on $C$ since only $C \cdot H=0$. In particular, $R$ must be an extension of sheaves of the form $\CO_C(m)$ where $m > k$, since $R \in \CT_{H,k}^{-\Imag(z)}$. Since $Z_{H,\beta}(\CO_C(m))=-m-\frac{n}{2}+\beta \cdot C$, as long as $k$ is chosen so that $\beta \cdot C< k+1+\frac{n}{2}$. 
	
	Now let $S \in \CF_H^{-\Imag(z)}$ be such that $H \cdot \ch_1(S)+\Imag(z)\ch_0(S)=0$. In this case, $\ch_0(S)>0$, and so $\nu_H(S)=-{\rm Im}z$. Since $S$ is an object of $\CF_H^{-\Imag(z)}$ of maximal possible slope, $S$ is $\nu_H$-semistable. And so by Theorem \ref{lem:bg}, $\ch_1^2(S) \geq 2\ch_0(S)\ch_2(S)$. Then 
	\begin{align*}
	Z_{H,\beta}(S) &= \Real(z)\ch_0(S)+\beta \cdot \ch_1(S)-\ch_2(S) \\
	&\geq \ch_0(S)\left(\Real(z)+\frac{\beta \cdot \ch_1(S)}{\ch_0(S)}-\frac{\ch_1^2(S))}{2\ch_0^2(S)} \right) \\
	&=  \ch_0(S)\left(\Real(z)-\frac{(\ch_1(S)-\ch_0(S)\beta)^2}{2\ch_0^2(S)}+\frac{\beta^2}{2} \right). 
	\end{align*}
	
	Since $H\cdot (\ch_1(S)-\ch_0(S)\beta)=-\Imag(z) \ch_0(S)$, we can see that $$H\cdot \left(\ch_1(S)-\ch_0(S)\beta+\frac{\ch_0(F)\Imag(z)}{H\cdot H}H\right)=0.$$ Then by the Hodge Index Theorem, $$\left(\ch_1(S)-\ch_0(S)\beta+\frac{\ch_0(F)\Imag(z)}{H^2}H\right)^2 \leq 0$$ 
	We can now rewrite 
	\begin{align*}
	Z_{H,\beta}(S) &= \Real(z)\ch_0(S)+\beta \cdot \ch_1(S)-\ch_2(S) \\
	&\geq  \ch_0(S)\left(\Real(z)-\frac{(\ch_1(S)-\ch_0(S)\beta)^2}{2\ch_0^2(S)}+\frac{\beta^2}{2} \right) \\
	&=\ch_0(S)\left(\Real(z)-\frac{(\ch_1(S)-\ch_0(S)\beta+\frac{\ch_0\Imag(z)}{H^2}H)^2}{2\ch_0^2(S)}+\frac{\Imag(z)^2}{H^2}+\frac{\beta^2}{2} \right). 
	\end{align*}
	So as long as $\Real(z)+\frac{\Imag(z)^2}{H^2} > -\frac{\beta^2}{2}$, we have $Z_{H,\beta}(S)>0$. 
\end{proof}

\begin{lem}
	\label{lem:hn}
	The pair $(Z_{H,\beta},\CB_{H,k})$, with $H$ chosen to be a rational class, and $\Imag(z) \in \BQ$  satisfy the HN-property.
\end{lem}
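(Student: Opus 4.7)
The proof strategy is to invoke the standard criterion of Bridgeland (Proposition 2.4 of \cite{Bridgeland}, as employed also in \cite{AB} and \cite{K3}): given that $Z_{H,\beta}$ is a stability function on $\CB_{H,k}^{-\Imag(z)}$ by Lemma \ref{lem:stabfun}, HN-filtrations exist as soon as one rules out infinite chains of strict monomorphisms in $\CB_{H,k}^{-\Imag(z)}$ with strictly increasing phase, and infinite chains of strict epimorphisms with strictly decreasing phase. The rationality hypotheses on $H$ and $\Imag(z)$ will control the part of such chains along which $\Imag Z_{H,\beta}$ strictly changes, while Lemma \ref{lem:dechoms} together with the Noetherian property of $\Coh(X)$ will control the remaining part.

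First I would exploit rationality. Because $H$ is a rational class and $\Imag(z) \in \BQ$, there is a positive integer $N$ for which
\[
N\cdot \Imag Z_{H,\beta}(E\cx)=N\bigl(H\cdot \ch_1(E\cx)+\Imag(z)\,\ch_0(E\cx)\bigr)\in \BZ
\]
for every $E\cx \in \CDb(X)$. Combined with the fact that $\Imag Z_{H,\beta}$ is nonnegative on $\CB_{H,k}^{-\Imag(z)}$ (by the construction of the heart), this shows that the image lies in the discrete set $\tfrac{1}{N}\BZ_{\geq 0}$. Consequently, along any chain of subobjects in $\CB_{H,k}^{-\Imag(z)}$, the imaginary part of $Z_{H,\beta}$ can strictly decrease only finitely many times; eventually it must stabilize.

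The heart of the argument, and the main obstacle, is then to show that a chain in $\CB_{H,k}^{-\Imag(z)}$ along which $\Imag Z_{H,\beta}$ is constant must stabilize in length. Successive quotients of such a chain satisfy $\Imag Z_{H,\beta}=0$, and the case analysis in the proof of Lemma \ref{lem:stabfun} shows that any such object is an extension of point sheaves, sheaves $\CO_C(m)$ with $m>k$, and shifts $\CO_C(i)[1]$ with $i\leq k$. For a descending chain of subobjects, I would pass first to $\CA_H^{-\Imag(z)}$-cohomology and then to sheaf cohomology, as in Lemma \ref{tiltpairlem}, thereby producing a descending chain of subsheaves in $\Coh(X)$ (which stabilizes by Noetherianity), together with a tower of inclusions whose quotients lie in $\CF_{H,k}^{-\Imag(z)}$ (which stabilizes by Lemma \ref{lem:dechoms}). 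A dual argument handles ascending chains of subobjects with strictly decreasing phases of the quotients. The delicate point is the bookkeeping required to translate monomorphisms in the doubly tilted heart into coherent-sheaf chains, but no conceptually new ingredients beyond those of Lemma \ref{tiltpairlem} and Lemma \ref{lem:dechoms} are needed to complete the reduction.
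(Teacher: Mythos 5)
Your overall skeleton matches the paper's: both arguments reduce the HN property to (i) discreteness of the image of $\Imag(Z_{H,\beta})$, which follows from rationality of $H$ and $\Imag(z)$, and (ii) a chain condition for objects/quotients with vanishing imaginary part. The gap is in step (ii), and it starts with your description of the objects $E\cx\in\CB_{H,k}^{-\Imag(z)}$ with $\Imag(Z_{H,\beta}(E\cx))=0$. These are \emph{not} all extensions of point sheaves, sheaves $\CO_C(m)$ with $m>k$, and shifts $\CO_C(i)[1]$ with $i\leq k$: the case analysis in Lemma \ref{lem:stabfun} also produces shifts $F[1]$ of $\nu_H$-semistable torsion-free sheaves $F\in\CF_H^{-\Imag(z)}$ with $\nu_H(F)$ exactly equal to $-\Imag(z)$ (for instance $\CO_X[1]$ when $\Imag(z)=0$). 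The presence of these rank-negative objects is precisely what makes the chain condition nontrivial: the subcategory of objects with vanishing imaginary part is not generated by finitely many simple-looking sheaves on $C$ and points, so no length or Jordan--H\"older count is available.

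Concretely, for an ascending chain $A_1\cx\subseteq A_2\cx\subseteq\cdots\subseteq E\cx$ with $\Imag(Z_{H,\beta}(A_j\cx))=0$, the paper shows (after Noetherian reductions) that the quotients $C_j\cx=E\cx/A_j\cx$ give rise to an \emph{ascending} chain of torsion-free sheaves $Q\subseteq H^{-1}(C_1\cx)\subseteq H^{-1}(C_2\cx)\subseteq\cdots$ whose successive quotients are the torsion sheaves $H^0(B_j\cx)\cong H^0(A_j\cx)/H^0(A_{j-1}\cx)$, supported on $C$ or on points and lying in $\CT_{H,k}^{-\Imag(z)}$ (degrees $>k$). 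This chain is not contained in any fixed coherent sheaf, so Noetherianity of $\Coh(X)$ does not terminate it; and Lemma \ref{lem:dechoms} does not apply, since that lemma governs chains whose quotients lie in $\CF_{H,k}^{-\Imag(z)}$ (degrees $\leq k$) and its proof depends on that degree bound through the $\Ext$-computations. The paper instead has to import the double-dual boundedness argument of \cite[Proposition 7.1]{Bridgeland} and \cite[Proposition 7.1]{K3}, supplemented by an analysis of the curve-supported case. So your assertion that ``no conceptually new ingredients beyond those of Lemma \ref{tiltpairlem} and Lemma \ref{lem:dechoms} are needed'' is where the proof breaks. A smaller but real slip in the same passage: a \emph{descending} chain of subsheaves does not stabilize by Noetherianity (that gives the ascending chain condition); in Lemma \ref{tiltpairlem} the descending chain terminates because of Lemma \ref{lem:dechoms}, not Noetherianity.
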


\begin{proof}
	Following \cite[Proposition B.2]{BM2011}, we first show that the image of $\Imag(Z_{H,\beta}(\CB_{H,k}^{-\Imag(z)}))$ is discrete. This is clear, since the classes $\ch_1(E\cx)$ lie in a lattice for all $E\cx \in \CDb(X)$. Now for $E\cx \in \CB_{H,k}^{-\Imag(z)}$, we must show that for any sequence of inclusions $$0=A\cx_0 \hookrightarrow A\cx_1 \hookrightarrow \cdots \hookrightarrow A\cx_j \hookrightarrow A\cx_{j+ 1} \hookrightarrow \cdots \hookrightarrow E\cx$$ in $\CB_{H,k}^{-\Imag(z)}$, such that $\Imag(Z_{H,k}(A_j\cx))=0$ for all $j$, the sequence $A\cx_j$ stabilizes. 
	
	$E\cx$ lies in an exact triangle $$F[1] \rightarrow E\cx \rightarrow S\cx$$ with $F \in \CF_{H,\beta}^{-\Imag(z)}$ and $S\cx \in \CT_{H,\beta}^{-\Imag(z)}$. Suppose $S\cx$ has an HN filtration in $\CA_H^{-\Imag(z)}$. 
	That is, there exists an exact triangle 
	\begin{equation}
	\label{hn4}
	A\cx \rightarrow S\cx \rightarrow B\cx
	\end{equation}
	in $\CA_H$, such that $\Imag(Z_{H,\beta}(A\cx))=0$, and for all $C\cx \in \CA_H$ such that $\Imag(Z_{H,\beta}(C\cx))=0$, $\Hom(C\cx, B\cx)=0$. We can take the long exact sequence of cohomology of (\ref{hn4}) with respect to the heart $\CB_{H,k}$ to get an exact sequence $$H^0_{\CB}(A\cx) \rightarrow E\cx \rightarrow B\cx \rightarrow H^1_{\CB}(A\cx).$$ Let $D\cx$ be the cone of the morphism $H^0_{\CB}(A\cx) \rightarrow E\cx$. Then $D\cx$ is automatically in $\CT_{H,k}^{-\Imag(z)}$, and this is an exact triangle in $\CB_{H,k}^{-\Imag(z)}$. 
	
	Further, $\Imag(Z_{H,\beta}(H^0_{\CB}(A\cx)))=0$. Now suppose $C\cx$ lies in $\CB_{H,k}$ and $\Imag(Z_{H,\beta}(C\cx))=0$. Then $C\cx$ fits into an exact triangle $F\cx[1] \rightarrow C\cx \rightarrow T\cx$ with $F\cx \in \CF_{H,k}^{-\Imag(z)}$, $T\cx \in \CT_{H,k}^{-\Imag(z)}$, and ${\rm Im}Z_{H,\beta}(F\cx)=\Imag(Z_{H,\beta}(T\cx))=0$. Since $D\cx$ lies in $\CT_{H,k}^{-\Imag(z)}$ there can be no morphisms from $F\cx[1]$ to $D\cx$. There can be no morphisms $T\cx \rightarrow C\cx$ since such a morphism would imply that $\Hom(T\cx, D\cx) \neq 0$. Thus ${\rm Hom}(C\cx,D\cx)=0$. 
	
	Now consider the morphism $E\cx \rightarrow D\cx$. The kernel of this morphism $K\cx$ in $\CB_{H,k}^{-\Imag(z)}$ fits into an exact triangle $$F[1] \rightarrow K\cx \rightarrow A\cx.$$ Hence $K\cx \in \CB_{H,k}^{-\Imag(z)}$ and $\Imag(Z_{H,\beta}(K\cx))=0$. Therefore $E\cx$ also has the HN property in $\CB_{H,k}^{-\Imag(z)}$. Therefore, it is enough to show that if $E\cx \in \CT_{H,k}^{-\Imag(z)}$, then $E\cx$ has an HN filtration in $\CA_H^{-\Imag(z)}$. 
	
	We now prove that $E\cx$ has an HN-filtration in $\CA_H^{-\Imag(z)}$. This proof is similar to \cite[Proposition 7.1]{K3}, where we use the nef divisor $H$ instead of an ample divisor $\omega$. Suppose we have a sequence of inclusions  $$0=A\cx_0 \hookrightarrow A\cx_1 \hookrightarrow \cdots \hookrightarrow A\cx_j \hookrightarrow A\cx_{j+ 1} \hookrightarrow \cdots \hookrightarrow E\cx$$ in $\CA_H^{-\Imag(z)}$, where $\Imag(Z_{H,\beta}(A\cx_j))=0$ for all $j$. Then for each $j$ we have exact triangles 
	\begin{equation}
	\label{hn1}
	A\cx_{j-1} \rightarrow A\cx_j \rightarrow B\cx_j
	\end{equation}
	\begin{equation}
	\label{hn2}
	A\cx_j \rightarrow E\cx \rightarrow C\cx_j
	\end{equation}
	where $B\cx_j$ and $C\cx_j$ are in $\CA_H$. 
	
	Taking the long exact sequence of cohomology of (\ref{hn1}) and (\ref{hn2}) yields a sequence of inclusions in ${\rm Coh}(X)$:
	$$0=H^{-1}(A\cx_0) \hookrightarrow H^{-1}(A\cx_1) \hookrightarrow \cdots \hookrightarrow H^{-1}(A\cx_j) \hookrightarrow H^{-1}(A\cx_{j+1}) \hookrightarrow \cdots \hookrightarrow H^{-1}(E\cx).$$ Since $\Coh(X)$ is Noetherian, this sequence stabilizes. So we can assume that $H^{-1}(A\cx_j)$ is constant for all $j$. Then there is an exact sequence $$0 \rightarrow H^{-1}(B\cx_j) \rightarrow H^0(A\cx_{j-1})\rightarrow H^0(A\cx_j) \rightarrow H^0(B\cx_j) \rightarrow 0.$$ But $H^{-1}(B\cx_j)$ is torsion-free, and $H^0(A\cx_{j-1})$ is a torsion sheaf, so $H^{-1}(B\cx_j)=0$ for all $j$. 
	
	It remains to show that for $j \gg 0$, $H^0(B\cx_j)=0$. The triangles (\ref{hn1}) and (\ref{hn2}) yield a third triangle, 
	\begin{equation}
	\label{hn3}
	B\cx_{j} \rightarrow C\cx_{j-1} \rightarrow C\cx_j.
	\end{equation}
	The long exact sequence of cohomology of (\ref{hn2}) and (\ref{hn3}) together yield a sequence of surjections in ${\rm Coh}(X)$:
	$$H^0(E\cx) \twoheadrightarrow H^0(C\cx_1) \twoheadrightarrow \cdots \twoheadrightarrow H^0(C\cx_{j-1}) \twoheadrightarrow H^0(C\cx_j)\twoheadrightarrow \cdots.$$
	Since $\Coh(X)$ is Noetherian, this sequence stabilizes. So if we take $j \gg 0$, we can assume  $H^0(C_j)$ are constant. Then we have an exact sequence 
	\begin{equation}
	\label{hn6}
	0 \rightarrow H^{-1}(C\cx_{j-1}) \rightarrow H^{-1}(C\cx_j) \rightarrow H^0(B\cx_j) \rightarrow 0.
	\end{equation}
	
	Furthermore, from (\ref{hn2}) we see that for $j \gg 0$, the map $H^{-1}(A\cx) \rightarrow H^{-1}(E\cx)$ is constant. So there is a torsion-free sheaf $Q$ such that for all $j \gg 0$, $$0 \rightarrow Q \rightarrow H^{-1}(C_j\cx) \rightarrow H^0(A_j) \rightarrow  0$$ is exact. We would like to say that the sequence of inclusions
	\begin{equation}
	\label{hn5}
	0 \subseteq Q \subseteq H^{-1}(C_1\cx) \subseteq \cdots \subseteq H^{-1}(C_{j-1}\cx) \subseteq H^{-1}(C_j\cx) \subseteq \cdots
	\end{equation}
	stabilizes for $j \gg 0$
	
	If $H^{0}(A_j\cx)$ is supported on points for $j \gg 0$, then it follows from the argument of \cite[Proposition 7.1]{Bridgeland} that the sequence stabilizes for $j \gg 0$. Otherwise, $H^0(A_j\cx)$ is supported on $C$ for all $j$. Furthermore, since $H^0(A_j\cx) \in \CT_{H,k}^{-\Imag(z)}$, we can further assume that $\Hom(H^0(A_j\cx),\CO_C(k))=0$. Also,      (\ref{hn6}) implies that $H^0(B_j\cx)$ is the quotient $H^0(A_j\cx)/H^0(A_{j-1}\cx)$, and hence supported on points.  
	
\end{proof}

\begin{thm}
	\label{thm:stab}
	The pair $(Z_{H,\beta},\CB_{H,k}^{-\Imag(z)})$ define a stability condition on $\CDb(X)$ when $k$ is chosen so that $k+\frac{n}{2}<\beta \cdot C < k+\frac{n}{2}+1$ and $\Real(z)+\frac{\Imag(z)^2}{H^2} > -\frac{\beta^2}{2}$.
\end{thm}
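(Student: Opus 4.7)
The plan is to verify the two axioms of Definition \ref{def:stab} for the pair $(Z_{H,\beta}, \CB_{H,k}^{-\Imag(z)})$ by stitching together the technical lemmas established in this section. The category $\CB_{H,k}^{-\Imag(z)}$ is automatically a heart of a bounded t-structure, being the tilt of $\CA_H^{-\Imag(z)}$ at the torsion pair $(\CT_{H,k}^{-\Imag(z)}, \CF_{H,k}^{-\Imag(z)})$ furnished by Lemma \ref{tiltpairlem}. The positivity axiom (1) is then exactly what Lemma \ref{lem:stabfun} proves under the stated hypotheses $k + \tfrac{n}{2} < \beta \cdot C < k + \tfrac{n}{2} + 1$ and $\Real(z) + \tfrac{\Imag(z)^2}{H^2} > -\tfrac{\beta^2}{2}$; no further argument is required for this part.

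For the Harder--Narasimhan axiom (2), Lemma \ref{lem:hn} directly yields the conclusion when $H$ is a rational class and $\Imag(z) \in \BQ$, establishing the theorem in the rational case. To obtain the general case, I would approximate $(H, z)$ by a sequence of rational pairs $(H_m, z_m) \to (H, z)$ which still satisfy the numerical constraints of the theorem, yielding a sequence of stability conditions $\sigma_m = (Z_{H_m, \beta}, \CB_{H_m, k}^{-\Imag(z_m)})$ by the rational case. The support property established in Theorem \ref{thm:supportbr} provides uniform control on the norms of central charges of semistable objects, and Bridgeland's deformation theorem (Theorem \ref{thm:manifold}) then guarantees that $\sigma_m$ converges in ${\rm Stab}(X)$ to a stability condition whose central charge is $Z_{H,\beta}$.

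The main obstacle is to identify the heart of this limit stability condition with $\CB_{H,k}^{-\Imag(z)}$ itself rather than some nearby heart in ${\rm Stab}(X)$. This requires comparing the phases of the critical objects across the approximating sequence --- the twisted structure sheaves $\CO_C(i)$ for $i \leq k$, the torsion-free sheaves sitting on the slope boundary $\nu_H = -\Imag(z)$, and the skyscraper sheaves $\CO_x$ --- to ensure that none of them are moved to a different shift in the limit. Once this identification is secured, the HN filtrations produced by each $\sigma_m$ on a fixed object $E\cx \in \CB_{H,k}^{-\Imag(z)}$ can be shown to stabilize (using the discreteness of $\Imag(Z_{H_m,\beta})$ and the support property bound on lengths), yielding the desired HN filtration for $(Z_{H,\beta}, \CB_{H,k}^{-\Imag(z)})$ and completing the proof.
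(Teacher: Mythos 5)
Your proposal is correct and takes essentially the same approach as the paper: the paper's proof of Theorem \ref{thm:stab} simply combines Lemma \ref{lem:stabfun} (the positivity of $Z_{H,\beta}$ on the heart) with Lemma \ref{lem:hn} (the Harder--Narasimhan property). The extra work you do to pass from rational to real $H$ and $\Imag(z)$ via the support property and a deformation argument is precisely what the paper defers to Theorem \ref{thm:real}, so you have correctly identified and filled, inside this proof, a gap that the paper addresses only later.
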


\begin{proof}
	Lemmas \ref{lem:stabfun} and \ref{lem:hn} show that the pair $(Z_{H,\beta},\CB_{H,k}^{-\Imag(z)})$ satisfies the properties required in Definition \ref{def:stab}.
\end{proof}

\section{Support property}

In order to consider wall-crossing, we must show that when the pair $\sigma_{H,\beta}=(Z_{H,\beta},\CB_{H,k}^{-\Imag(z)})$ is deformed slightly, the phases of objects do not vary too much. That is, we need to show $\sigma_{H,\beta}$ satisfies the support property, stated in Definition \ref{defn:support}. This definition is equivalent to the following alternate definition, given in \cite[Section 2.1]{KS}.

\begin{prop}
	\label{prop:quad}
	A stability condition $\sigma=(Z,\CB)$ satisfies the support property if and only if there exists a quadratic form $Q$ such that $Q$ is negative definite on the kernel of the central charge $Z$, and for any $\sigma$-semistable objects $E\cx$ in $\CB$, $Q(E\cx) \geq 0$.
\end{prop}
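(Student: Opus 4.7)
The plan is to prove the two implications separately, using $|Z(\cdot)|^2$ itself as the building block for the quadratic form. Throughout I view $\mathcal{N}(X)_{\mathbb{R}}$ as a finite-dimensional real vector space equipped with the norm $\|\cdot\|$ used in Definition \ref{defn:support}.

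For the direction $(\Leftarrow)$, assume such a $Q$ exists. I would consider the set
\[
K = \{ v \in \mathcal{N}(X)_{\mathbb{R}} \,:\, \|v\|=1,\ Q(v) \geq 0 \},
\]
which is closed in the unit sphere and therefore compact since $\mathcal{N}(X)_{\mathbb{R}}$ is finite-dimensional. On $K$ the continuous function $v \mapsto |Z(v)|$ is strictly positive: if $|Z(v)|=0$ then $v \in \ker Z$, where $Q$ is negative definite, contradicting $Q(v) \geq 0$ and $v \neq 0$. Hence $|Z|$ attains a positive minimum $C$ on $K$. For any $\sigma$-semistable $E^{\cdot}$ we have $Q(E^{\cdot}) \geq 0$ by hypothesis, so $\frac{E^{\cdot}}{\|E^{\cdot}\|}$ lies in $K$ (after identifying classes with their images in $\mathcal{N}(X)_{\mathbb{R}}$), giving $|Z(E^{\cdot})|/\|E^{\cdot}\| \geq C > 0$, which is the support property.

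For the direction $(\Rightarrow)$, assume the support property holds with constant $C > 0$. I would simply define
\[
Q(v) := |Z(v)|^2 - C^2 \|v\|^2 = (\Real Z(v))^2 + (\Imag Z(v))^2 - C^2 \|v\|^2.
\]
This is manifestly a real quadratic form on $\mathcal{N}(X)_{\mathbb{R}}$ (sum of two squares of real linear functionals minus a multiple of a positive-definite form). For any $\sigma$-semistable $E^{\cdot}$, Definition \ref{defn:support} directly gives $|Z(E^{\cdot})|^2 \geq C^2 \|E^{\cdot}\|^2$, i.e.\ $Q(E^{\cdot}) \geq 0$. Moreover, on $\ker Z$ the form reduces to $-C^2 \|v\|^2$, which is negative definite since $\|\cdot\|^2$ is positive definite.

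The two directions together give the stated equivalence, so Proposition \ref{prop:quad} follows. The only real obstacle is verifying the compactness-based minimization in $(\Leftarrow)$; once one observes that $\mathcal{N}(X)_{\mathbb{R}}$ is finite-dimensional (as needed for Theorem \ref{thm:manifold}), this is routine. In fact, for the applications in the subsequent sections it suffices to exhibit a suitable $Q$, so in practice one invokes $(\Leftarrow)$ and may cite the converse from \cite{KS} or \cite[Appendix B]{BM2011}, keeping the argument brief.
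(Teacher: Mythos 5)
Your proposal is correct in substance, and it is worth noting that the paper gives no proof of its own here: it defers entirely to \cite[Section 2.1]{KS} and \cite[Appendix A]{BMS}. What you have written is essentially the standard argument from those references --- a compactness/minimization argument on the locus $\{Q\geq 0\}$ in the unit sphere for one direction, and the explicit form $|Z(v)|^2-C^2\|v\|^2$ for the other --- so you have reconstructed the proof the paper outsources rather than taken a genuinely different route. Two small points should be tightened. First, for $Q(v)=|Z(v)|^2-C^2\|v\|^2$ to be a quadratic form you must take $\|\cdot\|$ to be a Euclidean norm on $\mathcal{N}(X)\otimes\BR$ (equivalently, replace $\|v\|^2$ by any positive definite quadratic form); for an arbitrary norm $\|v\|^2$ need not be quadratic, and since all norms on this finite-dimensional space are equivalent this costs only a change in the constant $C$. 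Second, Definition \ref{defn:support} states the inequality $|Z(E\cx)|/\|E\cx\|>C$ only for \emph{stable} objects, whereas the proposition quantifies over semistable ones, so in the forward direction you must pass from stable to semistable: if $E\cx$ is semistable with Jordan-H\"{o}lder factors $E_1\cx,\dots,E_m\cx$, all of the same phase, then $|Z(E\cx)|=\sum_i|Z(E_i\cx)|$ while $\|E\cx\|\leq\sum_i\|E_i\cx\|$, so $|Z(E\cx)|/\|E\cx\|\geq\min_i |Z(E_i\cx)|/\|E_i\cx\|>C$. With these two routine adjustments your argument is complete and agrees with the cited proofs.
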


The proof is given in \cite[Section 2.1]{KS} and in \cite[Appendix A]{BMS}. We will construct such a quadratic form for a range of stability conditions $\sigma_s$ we now define, by considering semistable objects in the limit as $s \rightarrow \infty$.

\begin{defn}
	\label{def:sstab}
	For every $s\geq  1$ we can define a new stability condition $\sigma_{H,\beta,s}=(Z_{H,\beta,s},\CB_{H,k}^{-\Imag(z)})$, where $\CB_{H,k}^{-\Imag(z)}$ is as before, and $$Z_{H,\beta,s}(E\cx)=-\ch_2(E\cx)+\beta\cdot \ch_1(E\cx)+s\Real(z)\ch_0(E\cx)+i(H\cdot \ch_1(E\cx)+\Imag(z)\ch_0(E\cx)).$$
\end{defn}

\begin{lem}
	The pair $\sigma_{H,\beta,s}=(Z_{H,\beta,s},\CB_{H,k}^{\Imag(z)})$ give a stability condition on $X$ when $\beta$ and $z$ satisfy the conditions of Lemma \ref{lem:stabfun} and $\Real(z)>0$.
\end{lem}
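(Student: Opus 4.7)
The plan is to deduce this lemma from Lemmas \ref{lem:stabfun} and \ref{lem:hn} by observing that $Z_{H,\beta,s}$ differs from $Z_{H,\beta}$ only in the coefficient of $\ch_0$ in its real part, where $s\Real(z)$ replaces $\Real(z)$. In particular $\Imag Z_{H,\beta,s} = \Imag Z_{H,\beta}$, so the torsion pairs used to build the tilted heart in the statement are unaffected, and the only things to recheck are that $Z_{H,\beta,s}$ is a stability function on that heart and that the Harder--Narasimhan property still holds.

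First I would verify the stability-function condition by running the exact case analysis of Lemma \ref{lem:stabfun}. The triangle $F[1]\to E\cx \to T\cx$ and the further $\CA_H$-cohomology decomposition of $T\cx$ reduce the problem to showing $\Real Z_{H,\beta,s}<0$ for sheaves $F \in \CF_{H,k}^{-\Imag(z)}$ and $R \in \CT_H^{-\Imag(z)}$ with $\ch_0(R)=0$, and $\Real Z_{H,\beta,s}>0$ for $S \in \CF_H^{-\Imag(z)}$ with $\nu_H(S)=-\Imag(z)$, assuming $\Imag Z_{H,\beta,s}=0$ in each case. In the first two cases $\ch_0=0$, so the coefficient $s\Real(z)$ of $\ch_0$ plays no role and the inequalities on $k$ relative to $\beta\cdot C$ give the required signs exactly as before. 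In the third case the Hodge-index manipulation from Lemma \ref{lem:stabfun} produces
\begin{align*}
Z_{H,\beta,s}(S) \ge \ch_0(S)\left( s\Real(z) - \frac{\bigl(\ch_1(S)-\ch_0(S)\beta + \tfrac{\ch_0(S)\Imag(z)}{H^2}H\bigr)^2}{2\ch_0(S)^2} + \frac{\Imag(z)^2}{H^2}+\frac{\beta^2}{2}\right),
\end{align*}
and since $s\ge 1$ together with $\Real(z)>0$ gives $s\Real(z)\ge \Real(z)$, the hypothesis $\Real(z) + \Imag(z)^2/H^2 > -\beta^2/2$ from Lemma \ref{lem:stabfun} immediately upgrades to $s\Real(z) + \Imag(z)^2/H^2 > -\beta^2/2$, so $Z_{H,\beta,s}(S)>0$.

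Next I would verify the Harder--Narasimhan property by copying the proof of Lemma \ref{lem:hn} verbatim. That argument is driven entirely by discreteness of the image of $\Imag Z_{H,\beta}$ on the heart and by Noetherianity of $\Coh(X)$ (used to stabilize descending chains of $H^{-1}$'s and ascending chains of $H^0$'s arising from the HN filtration). The real part of the central charge is never invoked, so the identical chain of reductions applies to $Z_{H,\beta,s}$ without modification.

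The only delicate point, and the one step I would single out as the main thing to check, is confirming that in the proof of Lemma \ref{lem:stabfun} the value $\Real(z)$ enters \emph{only} in the final Hodge-index bound for $S \in \CF_H^{-\Imag(z)}$ and nowhere else. Once that audit is carried out, the rescaling $\Real(z)\mapsto s\Real(z)$ with $s\ge 1$ moves the relevant inequality in the favorable direction, and the hypothesis $\Real(z)>0$ is exactly what makes this monotonicity valid.
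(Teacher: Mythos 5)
Your proposal is correct and takes essentially the same route as the paper: the paper simply observes that $Z_{H,\beta,s}$ is $Z_{H,\beta}$ with $z$ replaced by $s\Real(z)+i\Imag(z)$, which leaves the heart unchanged and still satisfies the hypotheses of Lemma \ref{lem:stabfun} because $s\Real(z)+\frac{\Imag(z)^2}{H^2}>\Real(z)+\frac{\Imag(z)^2}{H^2}>-\frac{\beta^2}{2}$ when $s\geq 1$ and $\Real(z)>0$. Your more detailed audit of exactly where $\Real(z)$ enters the proofs of Lemmas \ref{lem:stabfun} and \ref{lem:hn} re-verifies what the paper invokes as a black box, but the underlying argument is identical.
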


\begin{proof}
	We need to show that the image $Z_{H,\beta,s}(\CB_{H,k}^{\Imag(z)})$ lies in the upper half plane for $s \geq 1$. The case $s=1$ is shown in Lemma \ref{lem:stabfun}. When $s>1$, then $$s\Real(z)+\frac{\Imag(z)^2}{H^2}>\Real(z)+\frac{\Imag(z)^2}{H^2}>-\frac{\beta^2}{2},$$ and so the pair $\beta$,  $s\Real(x)+i\Imag(z)$ satisfy the conditions of Lemma \ref{lem:stabfun}, and $\sigma_{H,\beta,s}$ is also a stability condition on $X$.
\end{proof}

We will now describe what happens as $s$ grows large.

\begin{defn}
	Define $\calD$ to be the set of $E\cx$ in $\CB_{H,k}^{-\Imag(z)}$ such that $E\cx$ is $Z_{H,\beta,s}$-semistable for $s\gg 0$. 
\end{defn}

\begin{lem}
	\label{lem:larges} 
	If $E\cx$ is in $\calD$ then it is of one of the following forms:
	\begin{enumerate} 
		\item $E\cx$ is a slope semistable sheaf in $\CT_{H}^{-\Imag(z)}$.
		\item $H^0(E\cx)$ is either $0$ or supported on $C$ or on points, and $H^{-1}(E\cx)$ fits into an exact sequence 
		$$0\rightarrow G \rightarrow H^{-1}(E\cx) \rightarrow F \rightarrow 0$$ where $F$ is a slope semistable sheaf in $\CF_{H}^{-\Imag(z)}$, and $G \in \CF_{H,k}^{-\Imag(z)}$. Here $G$ must be 0 unless $\nu_H(G)=-\Imag(z)$.
	\end{enumerate}
\end{lem}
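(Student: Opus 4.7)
The plan is to exploit the asymptotic behavior of the phases $\phi_s$ as $s \to \infty$ and to compare $E\cx$ with several natural subobjects and quotients in $\CB_{H,k}^{-\Imag(z)}$ built from its two cohomology theories (with respect to $\CA_H^{-\Imag(z)}$ and with respect to the standard t-structure). Since $\Imag Z_{H,\beta,s}(A\cx) = H\cdot\ch_1(A\cx) + \Imag(z)\,\ch_0(A\cx)$ does not depend on $s$ and $\Real Z_{H,\beta,s}(A\cx)$ has leading term $s\,\Real(z)\,\ch_0(A\cx)$, the sign of $\ch_0(A\cx)$ controls the limit: $\phi_s(A\cx) \to 0^{+}$ when $\ch_0>0$, $\phi_s(A\cx) \to 1^{-}$ when $\ch_0<0$, and $\phi_s(A\cx)$ is constant in $s$ when $\ch_0=0$. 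To subleading order in $1/s$, the inequality $\phi_s(A\cx) \le \phi_s(E\cx)$ for $s\gg 0$ reduces to a comparison of $\nu_H$-slopes on the same side of the rank axis.

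For $E\cx\in\CB_{H,k}^{-\Imag(z)}$ I would display three canonical short exact sequences in this heart. First, the tilt triangle $H_{\CA}^{-1}(E\cx)[1] \hookrightarrow E\cx \twoheadrightarrow H_{\CA}^{0}(E\cx)$. Second, inside the quotient, the triangle $F[1] \hookrightarrow H_{\CA}^{0}(E\cx) \twoheadrightarrow H^{0}(E\cx)$ where $F := H^{-1}(H_{\CA}^{0}(E\cx)) \in \CF_H^{-\Imag(z)}$ is torsion-free and $H^{0}(E\cx) \in \CT_H \cap \CT_{H,k}^{-\Imag(z)}$; the latter inclusion holds because the standard triangle $H^{-1}(E\cx)[1]\to E\cx\to H^{0}(E\cx)$ gives $\Hom(H^{0}(E\cx),\CO_C(i)) \hookrightarrow \Hom(E\cx,\CO_C(i)) = 0$. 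Third, the pullback $L\cx := \ker(E\cx \twoheadrightarrow H^{0}(E\cx))$ in $\CB_{H,k}^{-\Imag(z)}$, which fits in $H_{\CA}^{-1}(E\cx)[1] \hookrightarrow L\cx \twoheadrightarrow F[1]$. Observe that $H_{\CA}^{-1}(E\cx)[1]$ is torsion on $C$ with $\ch_0=0$, $\Imag Z=0$, and $\Real Z<0$ (each $\CO_C(i)$ with $i\le k$ has $Z_{H,\beta}>0$ by the computation in Lemma~\ref{lem:stabfun}), so its constant phase equals $1$; while the only pieces with $\ch_0<0$ come from the torsion-free sheaf $F$.

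Case 1: $H^{-1}(E\cx)=0$. Then $H_{\CA}^{-1}(E\cx)=0$ and $F=0$, so $E\cx$ is a sheaf in $\CT_H\cap\CT_{H,k}^{-\Imag(z)}$. If $E\cx$ were not $\nu_H$-semistable, take the maximal destabilizing subsheaf $F_1\subseteq E\cx$ and truncate away its $\CF_{H,k}^{-\Imag(z)}$-part via the torsion pair in $\CA_H^{-\Imag(z)}$ to obtain $T_1\subseteq F_1$ with $T_1 \in \CT_H\cap\CT_{H,k}^{-\Imag(z)}$ and $\nu_H(T_1)=\nu_H(F_1)>\nu_H(E\cx)$ (the truncated quotient is supported on $C$, contributing nothing to $H\cdot\ch_1$). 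The cokernel $E\cx/T_1$ inherits $\Hom(-,\CO_C(i))=0$ from $E\cx$ for $i\le k$, so $T_1\hookrightarrow E\cx$ is a short exact sequence in $\CB_{H,k}^{-\Imag(z)}$, yielding $\phi_s(T_1)>\phi_s(E\cx)$ for $s\gg 0$ and contradicting semistability.

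Case 2: $H^{-1}(E\cx)\ne 0$. Applying the $L\cx$-argument to any candidate with $\ch_0(E\cx)>0$ produces a subobject of phase $\to 1$ against $\phi_s(E\cx)\to 0$, forcing $\ch_0(E\cx)\le 0$ and hence $\phi_s(E\cx)\to 1$. The quotient $H^{0}(E\cx)$ must satisfy $\phi_s(H^{0}(E\cx))\ge \phi_s(E\cx)$; a sheaf with $\ch_0>0$ has $\phi_s\to 0$, and a torsion sheaf with $H\cdot\ch_1>0$ has constant phase in $(0,1)$, so $H^{0}(E\cx)$ must be zero or supported on $C$ or on points. The required filtration $0\to G\to H^{-1}(E\cx)\to F\to 0$ is immediate from the standard long exact sequence with $G := H_{\CA}^{-1}(E\cx) \in \CF_{H,k}^{-\Imag(z)}$ and $F \in \CF_H^{-\Imag(z)}$. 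For slope-semistability of $F$, assume $F_0\hookrightarrow F$ with $\nu_H(F_0)>\nu_H(F)$ and pull $F_0[1]\hookrightarrow F[1]$ back along $L\cx\twoheadrightarrow F[1]$; this produces a subobject $M\cx\hookrightarrow L\cx\hookrightarrow E\cx$ in $\CB_{H,k}^{-\Imag(z)}$ with $\phi_s(M\cx)>\phi_s(E\cx)$ for $s\gg 0$, a contradiction. Finally, when $G\ne 0$ the subobject $G[1]\hookrightarrow E\cx$ has phase identically $1$, so semistability forces $\phi_s(E\cx)=1$ for $s\gg 0$; since $G$ contributes nothing to $\Imag Z$, this pins down $\nu_H(F)=-\Imag(z)$, which is the slope condition in the lemma. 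The main obstacle throughout will be confirming that each candidate subobject really lives in $\CB_{H,k}^{-\Imag(z)}$ rather than only in $\CA_H^{-\Imag(z)}$ or in $\CDb(X)$, particularly for the truncation step in Case 1 and the pullback step in Case 2.
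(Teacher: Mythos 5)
Your proposal follows essentially the same route as the paper's proof: the same two-step decomposition of $E\cx$ via the torsion pairs $(\CT_{H,k}^{-\Imag(z)},\CF_{H,k}^{-\Imag(z)})$ and $(\CT_H^{-\Imag(z)},\CF_H^{-\Imag(z)})$, the same asymptotic analysis of $\phi_s$ governed by the sign of $\ch_0$, and the same device of truncating a slope-destabilizing subsheaf by its $\CF_{H,k}^{-\Imag(z)}$-part to produce a destabilizing subobject inside $\CB_{H,k}^{-\Imag(z)}$. The only substantive deviation is that you truncate the maximal destabilizing subsheaf where the paper truncates the penultimate Harder--Narasimhan step $S_{m-1}$; this is immaterial, and both variants share the same unaddressed edge case, namely that the chosen destabilizer might lie entirely in $\CF_{H,k}^{-\Imag(z)}$ so that the truncation is zero.
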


\begin{proof}
	Suppose that $E\cx$ is $Z_{H,\beta,s}$-semistable for $s \gg 0$. Recall that $E\cx$ fits into an exact triangle $$G[1] \rightarrow E \cx \rightarrow T\cx$$ where $G \in \CF_{H,k}^{-\Imag(z)}$, and that $T\cx$ must itself fit into an exact triangle $$F[1] \rightarrow T\cx \rightarrow S$$ where $F \in \CF_H^{-\Imag(z)}$ and $S \in \CT_H^{-\Imag(z)}$ are sheaves. Suppose first that $\ch_0(E\cx)>0$. Then as $s \rightarrow \infty$, $\phi_{H,\beta,s}(E\cx) \rightarrow 0$. Since $G[1]$ is fixed as $s$ varies with phase $1$, $G$ must be $0$. Further, since $F$ is a sheaf, $\ch_0(F[1])<0$. So as $s \rightarrow \infty$, $\phi_{H,\beta,s}(F[1]) \rightarrow 1$.

	Note that since $\Ext^{-1}(F,\CO_C(l))=0$ for all values of $l$, $F[1] \in \CT_{H,k}^{-\Imag(z)}$. Since $S \in \CT^{-\Imag(z)}_{H,k}$ as well, $F[1]$ is a subobject of $T\cx$ in $\CB_{H,k}^{-\Imag(z)}$. But we've assumed that $\phi_{H,\beta,s}(E\cx)\rightarrow 0$, and so $F[1]=0$ as well.
	
	Now $E\cx \cong S$ is a sheaf in $\CT_H^{-\Imag(z)}$ with $\ch_0(S)>0$. We can write the HN-filtration of $S$ with respect to $\nu_H$:
	$$0 \rightarrow S_1 \hookrightarrow \cdots \hookrightarrow S_{m-1}\hookrightarrow S_m=S $$ with quotients $T_i := S_i/S_{i-1}$ which are $\nu_H$-semistable, and with $\nu_H(T_i)>\nu_H(T_{i+1})$ for $i=1, \dots, m-1$. It may be that $S_{m-1}$ is not itself in $\CB_{H,k}^{-\Imag(z)}$, but it is in $\CA_H$, and so there is an exact triangle $$S'_{m-1} \rightarrow S_{m-1} \rightarrow F' $$ with $S'_{m-1} \in \CT_{H,k}^{-\Imag(z)}$ and $F' \in \CF_{H,k}^{-\Imag(z)}$. Since $F'$ is supported on $C$, $\nu_H(S_{m-1}')=\nu_H(S_{m-1})$. 
	
	Checking the long exact cohomology sequence, we can see that $S'_{m-1}$ is a sheaf, and so we can compose maps to get an injective map of sheaves $S'_{m-1} \hookrightarrow S$. The quotient will be a sheaf of positive slope, since $S \in \CT_{H}^{-\Imag(z)}$. Further, it can have no maps to $\CF_{H,k}^{-\Imag(z)}$, otherwise this would contradict that $S$ has no such maps. And so $S'_{m-1}$ injects into $S$ in $\CB_{H,k}^{-\Imag(z)}$. Since $\nu_H(S'_{m-1})>\nu_H(S)$, for $s$ sufficiently large, $\phi_{H,\beta,s}(S'_{m-1})>\phi_{H,\beta,s}(S)$, contradicting that $S$ is stable. Thus $S$ must itself be slope-semistable.
	
	Now suppose $\ch_0(E\cx)<0$. Then as $s \rightarrow \infty$, $\phi_{sH,\beta}(E\cx) \rightarrow 1$. Since $S$ is a quotient of $E\cx$ in $\CB_{H,k}^{-\Imag(z)}$, it must also be that $\phi_{H,\beta,s}(S) \rightarrow 1$ as $s \rightarrow \infty$. This is possible only if $\ch_0(S)=0$ and $H \cdot \ch_1(S)=0$. And so $S$ must be supported at points or along $C$. If $H \cdot \ch_1(F)<\Imag(z)$ then $G$ must be $0$.  In this case, we can use HN-filtrations in the same manner as in the previous case to show that $F$ must be slope semistable itself. If $H \cdot \ch_1(F)=-\Imag(z)$, then $F$ is automatically slope semistable. 
	
	It remains to consider $\ch_0(E\cx)=0$. In this case, $\ch_0(T\cx)=0$. It is possible that $T\cx=0$, since $\phi_{H,\beta,s}(G[1])=1$ for any value of $s$. If $T\cx \neq 0$, then first suppose $H \cdot \ch_1(E\cx) >-\Imag(z)$. This implies that as $s \rightarrow \infty$, $\phi_{H,\beta,s}(E\cx) \rightarrow \frac{1}{2}$. And so $G=0$, and $F$ must be $0$ as well. Then $E\cx$ is a torsion sheaf supported on a curve $C'$ not contained in $C$. If $H \cdot \ch_1(E\cx)=-\Imag(z)$, then $F$ must again be $0$, and now $S$ must be a torsion sheaf supported on $C$ or on points. 
\end{proof}

We now work towards the construction of a quadratic form $Q$ which will satisfy the requirements of Proposition \ref{prop:quad} where the semistable objects are the objects of $\calD$. We first need the following lemmas.

\begin{lem}
	\label{lem:otherc}
	There is a positive constant $C_H$ depending only on $H$ so that for any sheaf $E$ supported on a curve $C' \not\subseteq C$,
	$$H^2\ch_1(E)^2 + C_H(H\cdot \ch_1(E))^2 \geq 0.$$
\end{lem}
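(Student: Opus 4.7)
The strategy is Hodge index plus reduction to irreducible components. We assume $H^2 > 0$ (the case of interest: $H$ is nef and big), so that the intersection pairing on $\NS(X)_{\BR}$ has signature $(1,\rho-1)$ with $H$ in the positive direction, and its restriction to $H^{\perp}$ is negative semi-definite.

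Set $D := \ch_1(E)$. Since $E$ is supported on a curve not contained in $C$, one may write $D = \sum_{i} m_i C'_i$ with $m_i \geq 0$ and the $C'_i$ distinct irreducible curves, each different from $C$; the hypotheses on $H$ then give $H \cdot C'_i > 0$ for every $i$, hence $H \cdot D \geq 0$, with equality iff $D = 0$ (in which case the inequality is trivial). Decomposing $D = \alpha H + v$ in $\NS(X)_{\BR}$ with $\alpha = (H \cdot D)/H^2$ and $v \in H^{\perp}$, Hodge index gives $v^2 \leq 0$, and a direct expansion yields the identity
$$H^2 \ch_1(E)^2 + C_H \bigl(H \cdot \ch_1(E)\bigr)^2 \;=\; (1+C_H)(H \cdot D)^2 + H^2 v^2.$$
Thus the inequality is automatic when $D^2 \geq 0$, and in general reduces to bounding the scale-invariant ratio $-D^2/(H \cdot D)^2$ from above by $C_H/H^2$, uniformly in $D$.

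The main obstacle is producing this uniform bound. Since distinct irreducible curves on a smooth surface intersect nonnegatively, $D^2 \geq \sum_{i} m_i^2 (C'_i)^2$, so that $-D^2 \leq \sum_i m_i^2 a_i$ where $a_i := \max(-(C'_i)^2, 0)$. Setting $b_i := H \cdot C'_i > 0$, the elementary estimate
$$\sum_i m_i^2 a_i \;\leq\; \Bigl(\max_i \tfrac{a_i}{b_i^2}\Bigr)\Bigl(\sum_i m_i b_i\Bigr)^2$$
then gives
$$\frac{-D^2}{(H \cdot D)^2} \;\leq\; \sup_{C''} \frac{-(C'')^2}{(H \cdot C'')^2},$$
the supremum running over irreducible curves $C'' \neq C$ with $(C'')^2 < 0$. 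The finiteness of this supremum is vacuous for surfaces (such as the Hirzebruch surfaces featured in the paper) where $C$ is the unique irreducible negative curve; in greater generality it reflects the boundedness of the negative extremal rays of the cone of curves $\overline{NE}(X)$ away from the ray spanned by $[C]$. Taking $C_H := H^2$ times this supremum (or any positive constant when the supremum is vacuous) completes the proof.
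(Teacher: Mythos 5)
Your reductions are correct as far as they go: the decomposition $D=\alpha H+v$ with $v\in H^{\perp}$, the inequality $D^2\geq\sum_i m_i^2 (C_i')^2$ coming from nonnegativity of intersections of distinct irreducible curves, and the elementary estimate that isolates $\sup_{C''}\bigl(-(C'')^2\bigr)/(H\cdot C'')^2$ as the quantity to bound. The genuine gap is that the finiteness of this supremum \emph{is} the lemma, and you do not prove it. The justification you offer --- ``boundedness of the negative extremal rays of $\overline{NE}(X)$ away from the ray spanned by $[C]$'' --- is not a citable theorem: a surface can carry infinitely many irreducible curves of negative self-intersection whose rays accumulate on the boundary of the positive cone (e.g.\ the $(-1)$-curves on $\BP^2$ blown up at nine general points), and a uniform lower bound on $(C'')^2$ over irreducible curves (``bounded negativity'') is a well-known open problem on general surfaces. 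The supremum you need is in fact finite, but for a different reason: adjunction gives $-(C'')^2\leq 2+K_X\cdot C''$, and Cauchy--Schwarz on the negative definite lattice $H^{\perp}$ bounds $K_X\cdot C''$ in terms of $H\cdot C''$ and $-(C'')^2$ itself, which yields $-(C'')^2=O\bigl(1+(H\cdot C'')^2\bigr)$; one then still needs $H\cdot C''$ bounded away from zero, which holds because $H$ is taken rational in the arguments where this lemma is applied (cf.\ Lemma \ref{lem:hn}). None of this appears in your write-up, so the central step is missing.

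For comparison, the paper avoids taking a supremum over all negative curves by decomposing $\ch_1(E)=\alpha+lC$ orthogonally to $C$ rather than to $H$. Since $C$ is not a component of the support, $C\cdot\ch_1(E)\geq0$ forces $l\leq0$; since $H-tC$ is ample for $0<t\ll1$ (as $C$ is the only curve in $H^{\perp}$), intersecting with $H-tC$ gives $H\cdot\ch_1(E)>-tln\geq 0$, which bounds the negative contribution $-l^2n$ of the $C$-direction by $\frac{1}{t^2n}(H\cdot\ch_1(E))^2$. Only the $C^{\perp}$-component $\alpha$ then requires a Hodge-index-type constant. If you prefer your decomposition against $H$, you must supply the adjunction/Cauchy--Schwarz argument (or an equivalent) in place of the appeal to the cone of curves.
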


\begin{proof}
	Write $C'=\alpha+lC$ with $\alpha$ a class in $C^{\perp}$. Since $C'$ is not contained in $C$, $C \cdot C' \geq 0$, so $l \leq 0$. Further, for $0<t \ll 1$, $H-tC$ is ample. This follows from the fact that $H$ is big and nef, and that $C$ is the only effective divisor in $H^{\perp}$.
	
	Since this is an ample divisor, $C'\cdot (H-tC)>0$. So $H\cdot \alpha>-tln\geq 0$. Then 
	\begin{align*}
	(C')^2 +\frac{1}{t^2n}(H\cdot \alpha )^2 &= \alpha^2-l^2n+\frac{1}{t^2n}(H\cdot \alpha )^2 \\
	&> \alpha^2. \\
	\end{align*}
	
	Further, since $H$ is nef, the Hodge Index Theorem states that there exists some constant $C_H>0$ depending only on $H$ so that $H^2\alpha^2+C_H(H\cdot \alpha)^2 \geq 0$.
	We then have
	
	\begin{align*}
	(C')^2+\left(\frac{1}{t^2n}+\frac{C_{H}}{H^2} \right)(H\cdot \alpha)^2 &= \alpha^2-l^2n+\left(\frac{1}{t^2n}+\frac{C_{H}}{H^2} \right)(H\cdot \alpha)^2 \\
	&> \alpha^2+\frac{C_{H}}{H^2}(H\cdot \alpha)^2 \\
	&\geq 0. \\
	\end{align*}
\end{proof}

Define a constant $D_H$ as follows, where $C_H$ is as in Lemma \ref{lem:otherc}: $$m_1={\rm max}\{H\cdot \ch_1(F) \ \vert \ F \ {\rm is \ a \ slope \ semistable \ sheaf}, \ H\cdot \ch_1(F)<-{\rm Im z}, \, \ch_0(F)=1\}.$$ $$m_2={\rm max}\{H\cdot \ch_1(F) \ \vert \ F \ {\rm is \ a \ slope \ semistable \ sheaf}, \ H \cdot \ch_1{F}< -{\rm Im}z, \ \ch_0(F)=2\}.$$
$$D_H={\rm max}\left\{\frac{\frac{3}{2}n+2k+3}{m_1^2},\frac{8k+21}{m_2^2},C_H\right\}.$$ We now define a preliminary quadratic form.

\begin{defn}
	\label{def:q0}
	$$Q_0(E\cx):=\ch_1(E\cx)^2-2\ch_0(E\cx)\ch_2(E\cx)+D_H({\rm Im}Z_{H,\beta}(E\cx))^2.$$
\end{defn}

\begin{lem}
	\label{lem:q0}
	$Q_0(E\cx)\geq 0$ for $E\cx$ in $\calD$ such that $\Imag(Z_{H,\beta}(E\cx))>0$.
\end{lem}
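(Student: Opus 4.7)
The plan is to apply the classification of $\calD$ from Lemma~\ref{lem:larges} and verify $Q_0 \geq 0$ in each case. The hypothesis $\Imag Z_{H,\beta}(E\cx) > 0$ is crucial: it excludes exactly the objects whose first Chern class is a (nonzero) multiple of $C$, which would make $\ch_1(E\cx)^2 = -n m^2 < 0$ while $\Imag Z = 0$; in particular it rules out torsion sheaves supported entirely on $C$ or on points. So the torsion support that can appear is only on curves $C' \not \subseteq C$.

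For case (1) of Lemma~\ref{lem:larges}, $E\cx$ is a $\nu_H$-semistable sheaf in $\CT_H^{-\Imag(z)}$. If $E\cx$ is torsion-free, the Bogomolov--Gieseker inequality of Theorem~\ref{lem:bg} gives $\ch_1(E\cx)^2 - 2\ch_0(E\cx)\ch_2(E\cx) \geq 0$ and the remaining term $D_H(\Imag Z)^2$ is nonnegative. If $E\cx$ is torsion, then $\ch_0(E\cx)=0$ and the assumption $\Imag Z>0$ forces its support to include some $C' \not \subseteq C$, after which $Q_0(E\cx) = \ch_1(E\cx)^2 + D_H(H\cdot \ch_1(E\cx))^2$ is controlled by Lemma~\ref{lem:otherc}; the bound $D_H \geq C_H$ built into the definition of $D_H$ is what makes this work.

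The main case is (2) of Lemma~\ref{lem:larges}: $H^0(E\cx)$ is zero or supported on $C$ or points, and $H^{-1}(E\cx)$ sits in a sequence $0 \to G \to H^{-1}(E\cx) \to F \to 0$ with $F$ a $\nu_H$-semistable torsion-free sheaf in $\CF_H^{-\Imag(z)}$ and $G \in \CF_{H,k}^{-\Imag(z)}$. I would write $\ch(E\cx) = \ch(H^0(E\cx)) - \ch(G) - \ch(F)$ and expand $Q_0(E\cx)$ as the Bogomolov--Gieseker discriminant $\ch_1(F)^2 - 2\ch_0(F)\ch_2(F)$ of $F$ plus explicit cross terms. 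Because $\ch_1(G)$ and $\ch_1(H^0(E\cx))$ are multiples of $C$ (so annihilate $H$) and their ranks vanish, $\Imag Z_{H,\beta}(E\cx)$ reduces to $-(H\cdot \ch_1(F) + \Imag(z)\ch_0(F))$, which is $\geq 0$ since $F \in \CF_H^{-\Imag(z)}$.

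The hard part will be absorbing the cross terms $\ch_1(F)\cdot \ch_1(G)$, $\ch_1(F)\cdot \ch_1(H^0(E\cx))$ and the negative self-intersections $\ch_1(G)^2$, $\ch_1(H^0(E\cx))^2$ (each a negative multiple of $n$) into the Bogomolov--Gieseker slack for $F$ together with the $D_H(\Imag Z)^2$ buffer. This is precisely what the piecewise definition of $D_H$ is arranged to handle: for $\ch_0(F) = 1$ and $\ch_0(F) = 2$, where the BG discriminant is tightest, the slope bound $H\cdot \ch_1(F) \leq m_i$ turns $D_H(\Imag Z)^2$ into a quantity at least $D_H m_i^2$, and the numerators $\tfrac{3}{2}n + 2k + 3$ and $8k + 21$ in $D_H$ are calibrated to dominate the worst-case negative contributions from $G$ and $H^0(E\cx)$ (which are bounded in terms of $k$, $n$, and the possible multiplicities of the $\CO_C(j)$ summands); for $\ch_0(F) \geq 3$ the BG discriminant grows quadratically in the rank and the Hodge-index estimate of Lemma~\ref{lem:otherc} combined with $D_H \geq C_H$ suffices. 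Assembling these three sub-estimates gives $Q_0(E\cx) \geq 0$ on all of $\calD$ with $\Imag Z > 0$, which is the content of the lemma.
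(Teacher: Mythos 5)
Your case division follows Lemma~\ref{lem:larges} exactly as the paper does, and the sheaf cases are handled the same way: Bogomolov--Gieseker (Theorem~\ref{lem:bg}) for $\nu_H$-semistable torsion-free sheaves, and Lemma~\ref{lem:otherc} together with $D_H\geq C_H$ for torsion sheaves on curves $C'\not\subseteq C$. The problem is the main case, where $E\cx$ sits in a triangle $F[1]\to E\cx\to T$ with $T$ supported on $C$ or points. There your argument has a genuine gap: you assert that the negative cross terms are ``bounded in terms of $k$, $n$, and the possible multiplicities of the $\CO_C(j)$ summands'' and that the numerators in $D_H$ are calibrated to dominate them. But they are not bounded by $k$ and $n$: writing $\ch_1(T)=m[C]$, the cross term $2\,\ch_1(F[1])\cdot\ch_1(T)=2m\,(C\cdot\ch_1(F[1]))$ has no a priori lower bound (neither $m$ nor $C\cdot\ch_1(F)$ is controlled), and $-2\ch_0(F[1])\ch_2(T)$ involves twists $l>k$ that can be arbitrarily large. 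Meanwhile $D_H(\Imag Z_{H,\beta}(E\cx))^2$ is a \emph{fixed} quantity determined by $F$ alone (since $\Imag Z_{H,\beta}(E\cx)=\Imag Z_{H,\beta}(F[1])$), so no choice of $D_H$ can absorb an unbounded negative contribution. A calibration argument of the kind you describe cannot close this case.

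The missing ingredient is the paper's Euler-characteristic estimate. Since $\CO_C(k+1)$ and $F[1]$ are both semistable with $\phi(F[1])<1$, one has $\Hom(\CO_C(k+1),F[1])=0$, and $\Ext^2(\CO_C(k+1),F[1])=\Ext^3(\CO_C(k+1),F)=0$ for degree reasons, so $\chi(\CO_C(k+1),F[1])\leq 0$; Hirzebruch--Riemann--Roch then gives $C\cdot\ch_1(F[1])\geq (k+2)\ch_0(F[1])$. This is the lower bound on the cross term that your expansion needs: after reducing to $T\cong\CO_C(l)$ with $l>k$, it yields $Q_0(E\cx)\geq(\ch_0(F)-1)n+2\ch_0(F)(l-k-2)$, which is nonnegative except in finitely many residual cases ($l=k+1$ with $\ch_0(F)$ small), and only those residual cases are what the constants $\frac{3}{2}n+2k+3$ and $8k+21$ in $D_H$ are designed to handle. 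Without this Riemann--Roch input your strategy, as written, does not go through.
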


\begin{proof}
	First, if $E\cx$ is a torsion-free sheaf or a shift of a torsion-free sheaf in $\calD$, then by Lemma \ref{lem:larges}, this sheaf is $\nu_H$-semistable. Thus $Q_0(E\cx) \geq 0$ by Theorem \ref{lem:bg}. If $E\cx$ is a torsion sheaf not supported on $C$, then it is either supported on points, in which case $Q_0(E\cx)=0$, or it is supported on a curve not contained in $C$. In this case, $Q_0(E\cx) \geq 0$ by Lemma \ref{lem:otherc}.
	
	It remains to consider $E\cx$ such that there is an exact triangle $$F[1] \rightarrow E\cx \rightarrow T$$ where $T$ is a torsion sheaf supported on $C$ or on points, and $F$ is a slope semistable sheaf of slope smaller than $0$. If $\nu_H(F)< \Imag(z)$, then $\Hom(\CO_C(k+1),F[1])=0$ since both are semistable, and $\phi_{H,\beta}(F[1])<1$. Further, $\Ext^2(\CO_C(k+1),F[1])=\Ext^3(\CO_C(k+1),F)=0$. Thus $\chi(\CO_C(k+1),F[1])\leq 0$. By Hirzebruch-Riemann-Roch, $\chi(\CO_C(k+1),F[1])=-C\cdot \ch_1(F[1])+(k+2)\ch_0(F[1])$. Combining these facts, $C\cdot \ch_1(F[1]) \geq (k+2)\ch_0(F[1])$.
	
	Now we have
	\begin{align*}
	Q_0(E\cx) &\geq (\ch_1(T)+\ch_1(F[1]))^2-2\ch_0(F[1])(\ch_2(T)+\ch_2(F[1]) \\
	&\geq \ch_1(T)^2+2\ch_1(F[1])\ch_1(T)-2\ch_0(F[1])\ch_2(T). \\
	\end{align*}
	If $T$ is supported on points this is clearly positive. It suffices to consider $T \cong \CO_C(l)$ where $l>k$. Then the above inequality becomes
	\begin{align*}
	Q_0(E\cx) &\geq -n+2C\cdot \ch_1(F[1])-2\ch_0(F[1])\left(l+\frac{n}{2}\right) \\
	&\geq -n+2\left(k+2-l-\frac{n}{2}\right)\ch_0(F[1]). \\
	&= (\ch_0(F)-1)n+2\ch_0(F)(l-k-2) \\
	\end{align*}
	For $\ch_0(F) \geq 3$ this is necessarily positive. The only cases in which it may not be positive are $l=k+1$ and $\ch_0(F[1])=-1$, or $l=k+1$, $\ch_0(F[1])=-2$ and $n=3$. In these cases, the choice of $D_H$ ensures that $Q_0$ is positive.
\end{proof}

\begin{lem}
	\label{lem:q0neg}
	$Q_0$ is negative definite on the kernel of $Z_{H,\beta,s}$ as defined in Definition \ref{def:sstab} for all $s\geq1$. 
\end{lem}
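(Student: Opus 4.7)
The plan is to use the two real linear constraints cut out by $Z_{H,\beta,s}(v)=0$ to eliminate variables, then diagonalise the resulting quadratic form via the Hodge Index Theorem on $H^{\perp}\subset\NS_{\BR}(X)$, which applies because $H$ is big and nef with $H^2>0$.

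Writing $r=\ch_0(v)$, $\alpha=\ch_1(v)$, and $d=\ch_2(v)$, the vanishing of $Z_{H,\beta,s}(v)$ is equivalent to the two equations $H\cdot\alpha=-\Imag(z)\,r$ and $d=\beta\cdot\alpha+s\Real(z)\,r$. The first of these is precisely $\Imag Z_{H,\beta}(v)=0$, so the $D_H$-term in $Q_0$ drops out on the kernel; substituting for $d$ in $Q_0(v)=\alpha^2-2rd$ and completing the square through $\mu:=\alpha-r\beta$ rewrites this as $Q_0(v)=\mu^2-r^2(\beta^2+2s\Real(z))$. The kernel condition gives $H\cdot\mu=-\Imag(z)r$, so I would orthogonally decompose $\mu=-\tfrac{\Imag(z)r}{H^2}H+\nu$ with $\nu\cdot H=0$, producing
\[
Q_0(v)=\nu^2+r^2\!\left(\frac{\Imag(z)^2}{H^2}-\beta^2-2s\Real(z)\right).
\]
By the Hodge Index Theorem the intersection form is negative definite on $H^{\perp}$, so $\nu^2\leq0$ with equality iff $\nu=0$, and in particular $\beta^2\leq0$ since $\beta\cdot H=0$.

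I would finish by case analysis on $r$. If $r=0$, the kernel relations reduce to $H\cdot\alpha=0$ and $d=\beta\cdot\alpha$, so $\mu=\alpha\in H^{\perp}$ and $Q_0(v)=\alpha^2\leq0$; equality forces $\alpha=0$, whence also $d=0$ and $v=0$. If $r\neq0$, both summands in the display are nonpositive, and strict negativity of $Q_0(v)$ follows once one knows that the coefficient of $r^2$ is strictly negative, i.e.\ $2s\Real(z)+\beta^2>\Imag(z)^2/H^2$. The main obstacle is precisely this last arithmetic step: this sharper inequality must be extracted from the standing hypotheses $\Real(z)+\Imag(z)^2/H^2>-\beta^2/2$, $\Real(z)>0$, and $s\geq 1$, and this is the one place where all three conditions are used in combination.
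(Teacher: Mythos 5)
Your reduction is correct and is essentially the paper's own argument carried out more carefully: your $\mu=\ch_1(v)-\ch_0(v)\beta$ is the twist $\ch_1^{\beta}$ appearing in the paper's proof, and the identity $Q_0(v)=\nu^2+r^2\bigl(\Imag(z)^2/H^2-\beta^2-2s\Real(z)\bigr)$ on the kernel is exact. The problem is the step you defer to the end. The inequality you need, $2s\Real(z)+\beta^2>\Imag(z)^2/H^2$, does \emph{not} follow from $\Real(z)>0$, $s\geq 1$ and $\Real(z)+\Imag(z)^2/H^2>-\beta^2/2$: those hypotheses only give $2s\Real(z)+\beta^2>2(s-1)\Real(z)-2\Imag(z)^2/H^2$, which at $s=1$ is a nonpositive lower bound, whereas the target $\Imag(z)^2/H^2$ is nonnegative. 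Concretely, take $H^2=1$, $\Imag(z)=10$, $\Real(z)=1$, $\beta^2=-150$ (e.g.\ $\beta$ a suitable multiple of $C$); the standing hypothesis reads $101>75$ and holds, yet the coefficient of $r^2$ in your display is $250-2s$, positive for all $s\leq 124$. The class with $r=1$, $\alpha=\beta-\tfrac{\Imag(z)}{H^2}H$, $d=\beta\cdot\alpha+s\Real(z)$ lies in the kernel of $Z_{H,\beta,s}$ and has $Q_0>0$. So the lemma cannot be closed by ``combining the three conditions''; one needs either an extra hypothesis (for instance $\Real(z)-\Imag(z)^2/H^2>-\beta^2/2$, which yields $2\Real(z)+\beta^2>2\Imag(z)^2/H^2$) or the restriction $\Imag(z)=0$, in which case your inequality becomes $2s\Real(z)+\beta^2>0$ and does follow from the hypotheses.

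For comparison, the paper's proof elides exactly this difficulty: it asserts that $\ch_1^{\beta}(E\cx)$ lies in $H^{\perp}$, which is true only when $\Imag(z)\ch_0(E\cx)=0$, and that $\ch_2^{\beta}(E\cx)=(s\Real(z)+\tfrac{\beta^2}{2})\ch_0(E\cx)$ has the same sign as $\ch_0(E\cx)$, which requires $s\Real(z)>-\beta^2/2$ and again is not implied by the stated hypotheses (and even granting both claims the paper concludes only $Q_0\leq 0$, not negative definiteness). Your more explicit bookkeeping is the right way to isolate the required inequality; the gap is that you assert it can be extracted from the standing hypotheses when in fact it cannot, so as written the proof is incomplete at its decisive step.
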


\begin{proof}
	Suppose $Z_{H,\beta,s}(E\cx)=0$ for some $s \geq 1$.  Note that if $\ch^{\beta}(E\cx)=\ch(E\cx) e^{-\beta}$, $(\ch_1^{\beta})^2(E\cx)-2\ch_0^{\beta}(E\cx)\ch_2^{\beta}(E\cx)=\ch_1^2(E\cx)-2\ch_0(E\cx)\ch_2(E\cx)$. Since $\beta \cdot H=0$, $\ch_1^{\beta}(E\cx)=\ch_1(E\cx)-\beta\ch_0(E\cx)$ is in $H^{\perp}$, so $(\ch_1^{\beta})^2(E\cx) \leq 0$ by the Hodge Index Theorem. Further, since $E\cx$ is in the kernel of $Z_{H,\beta,s}$, $\ch_2^{\beta}(E\cx)=(s^2z+\frac{\beta^2}{2})\ch_0(E\cx)$ has the same sign as $\ch_0^{\beta}(E\cx)=\ch_0(E\cx)$. And so $Q_0(E\cx) \leq 0$. 
\end{proof}

$Q_0$ is negative on sheaves supported on $C$, and on their shifts. We now must adjust $Q_0$ to find a quadratic form which is positive on such sheaves. Note that it suffices to consider sheaves $\CO_C(l)$ where $l >k$, and shifts $\CO_C(m)[1]$, where $m\leq k$. 

\begin{defn}
	\label{defn:qs}
	Let $m_{\beta}={\rm min}\{\vert \beta \cdot C-k-\frac{n}{2}-1 \vert, \vert k+\frac{n}{2}-\beta\cdot C \vert \}$ and $D_{\beta}=\frac{n}{m_{\beta}^2}$. We now define another preliminary set of quadratic forms for $s \geq 1$: $$Q_s(E\cx)=Q_0(E\cx)+D_{\beta}(\Real(Z_{sH,\beta}(E\cx)))^2.$$
\end{defn}
By construction, $Q_s(E\cx) \geq 0$ for all $E\cx \in \calD$, and $Q_s$ is negative definite on the kernel of $Z_{H,\beta,s}$.

\begin{thm}
	\label{thm:supportbr}
	The central charge $Z_{H,\beta}$ satisfies the support property in the sense of Proposition \ref{prop:quad} for Bridgeland semistable objects in $\CB_{H,k}^{-\Imag(z)}$ with respect to the quadratic form $Q_1$. 
\end{thm}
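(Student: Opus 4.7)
To apply Proposition \ref{prop:quad} I must check (i) that $Q_1$ is negative definite on $\ker Z_{H,\beta,1}$, and (ii) that $Q_1(E\cx) \geq 0$ for every $\sigma_{H,\beta,1}$-semistable object of $\CB_{H,k}^{-\Imag(z)}$. Condition (i) is immediate: on $\ker Z_{H,\beta,1}$ both $\Imag Z_{H,\beta,1}$ and $\Real Z_{H,\beta,1}$ vanish, so the two square-term corrections in $Q_1$ drop out and $Q_1$ agrees there with $Q_0$, which is negative definite on $\ker Z_{H,\beta,1}$ by Lemma \ref{lem:q0neg} at $s=1$.

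For (ii) I would first verify $Q_1 \geq 0$ on $\calD$, the set of objects that are $\sigma_{H,\beta,s}$-semistable for $s \gg 0$. Lemma \ref{lem:larges} classifies these, and they split into two cases. For (shifts of) slope-semistable sheaves with $\ch_0 \neq 0$, the Bogomolov--Gieseker inequality (Theorem \ref{lem:bg}, as used in Lemma \ref{lem:q0}) gives $Q_0(E\cx) \geq 0$, and since the two added terms in $Q_1$ are squares, $Q_1(E\cx) \geq Q_0(E\cx) \geq 0$. For classes with $\ch_0 = 0$, $\Real Z_{sH,\beta}(E\cx)$ is independent of $s$, so $Q_1(E\cx) = Q_s(E\cx)$ identically; the constant $D_\beta$ in Definition \ref{defn:qs} was engineered precisely to enforce $Q_s \geq 0$ on the generators $\CO_C(l)$ of $\CF_{H,k}^{-\Imag(z)}$ and their shifts, while Lemma \ref{lem:otherc} supplies the inequality for torsion sheaves supported on curves not contained in $C$.

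Finally, I would propagate $Q_1 \geq 0$ from $\calD$ to every $\sigma_{H,\beta,1}$-semistable object by a wall-crossing induction along the family $\{\sigma_{H,\beta,s}\}_{s \geq 1}$. Given $E\cx$ semistable at $\sigma_{H,\beta,1}$ but not in $\calD$, let $s_0 > 1$ be the smallest $s$ at which $E\cx$ becomes strictly $\sigma_{s_0}$-semistable; its Jordan--H\"older factors $E_i\cx$ share phase $\phi_{s_0}(E\cx)$, are $\sigma_s$-semistable for $s$ slightly above $s_0$, and by induction on a complexity invariant (say $\Imag Z_{H,\beta,1}$) satisfy $Q_1(E_i\cx) \geq 0$. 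The chief technical obstacle is reassembling these pieces: one must show that the bilinear cross terms $B_1(E_i\cx, E_j\cx)$, where $B_1$ is the polar form of $Q_1$, have the correct sign. Here one uses that the vectors $Z_{s_0}(E_i\cx)$ are positively real-proportional (so appropriate linear combinations of $[E_i\cx]$ lie in $\ker Z_{s_0}$) together with negative definiteness of $Q_{s_0}$ on $\ker Z_{s_0}$, via the standard Cauchy--Schwarz-type deformation argument used to propagate the support property across walls, yielding $Q_1(E\cx) \geq 0$ and completing the verification of (ii).
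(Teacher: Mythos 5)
Your proposal is correct and follows essentially the same route as the paper: verify $Q_1$ (via $Q_0$) is nonnegative on the limiting set $\calD$ using Lemmas \ref{lem:larges}, \ref{lem:q0}, \ref{lem:otherc} and the choice of $D_\beta$, check negative definiteness on the kernel via Lemma \ref{lem:q0neg}, and then propagate to all $Z_{H,\beta}$-semistable objects by induction on the discrete invariant $\Imag(Z_{H,\beta})$, using the standard cone argument that Jordan--H\"older factors at a wall in the family $\sigma_{H,\beta,s}$ have proportional central charges, so their differences lie in $\ker Z_{H,\beta,s}$ where the form is negative definite. The only differences from the paper are cosmetic (you carry $Q_1$ throughout rather than working with $Q_0$ and adding the $D_\beta$ correction at the end).
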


\begin{proof}
	First we consider $E\cx \in \CB_{H,k}^{-\Imag(z)}$ such that $\Imag(Z_{H,\beta}(E\cx))>0$. The imagine of $\Imag(Z_{H,\beta})$ is discrete, and so we may proceed by induction. Any objects for which $\Imag(Z_{H,\beta})$ is minimal must be in $\calD$, as any possible destabilizing subobjects must have smaller imaginary part. Lemma \ref{lem:q0} and Lemma \ref{lem:q0neg} show that the support property is satisfied for such objects. 
	
	Now suppose there is some $E\cx \in \CB_{H,k}^{-\Imag(z)}$ which is $Z_{H,\beta}$-semistable but for which $Q_0(E\cx)<0$. Assume that for any $F\cx$ such that $\Imag(Z_{H,\beta}(F\cx))<\Imag(Z_{H,\beta}(E\cx))$, the requirements of the support property are met by $Q_0$. Since $E\cx$ is not in $\calD$, this implies that there exists some $s>1$ for which $E\cx$ is strictly $Z_{H,\beta,s}$-semistable. Let $E\cx_1,\dots , E\cx_m$ be the Jordan-H\"{o}lder factors of $E\cx$. Then $\Imag(Z_{H,\beta}(E\cx_i))<\Imag(Z_{H,\beta}(E\cx))$ for all $i=1,\dots, m$. And so by the inductive hypothesis, $Q_0(E_i)\geq 0$.
	
	The quadratic form $Q_0$ divides $K(\CDb(X))$ into a positive and negative cone. For any pair $E\cx_i$ and $E\cx_j$ of Jordan H\"{o}lder factors of $E\cx$, these lie on the same ray in the image of $Z_{H,\beta,s}$. And so there is some $a>0$ for which $Z_{H,\beta,s}(E\cx_i)-aZ_{H,\beta,s}(E\cx_j)=0$. The restriction of $Q_0$ to the kernel of $Z_{H,\beta,s}$ is negative definite, and so this combination $[E\cx_i]-a[E\cx_j]$ must lie in the negative cone of $Q_0$. This implies that any positive linear combination of $[E\cx_i]$ and $[E\cx_j]$ lies in the positive cone of $Q_0$. Since this is true for any pair $E\cx_i$ and $E\cx_j$, it follows that $Q_0(E\cx) \geq 0$. 
	
	We have shown that $Q_0$ satisfies the requirements of the support property for semistable objects of strictly positive imaginary part. We can now use $Q_1$ from Definition \ref{defn:qs} which will satisfy the support property for all $Z_{H,\beta}$-semistable objects. 
\end{proof}

The above shows that $(Z_{H,\beta},\CB_{H,k}^{-\Imag(z)})$ is a stability condition with the support property when $H$ and $\Imag(z)$ are rational. We need to extend this results to real $H$ and $\Imag(z)$. 

\begin{thm}
	\label{thm:real}
	The pair $(Z_{H,\beta},\CB_{H,k}^{-\Imag(z)})$ is a stability condition with the support property for $H$ and $\Imag(z)$ real. 
\end{thm}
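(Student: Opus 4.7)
The plan is to extend Theorem \ref{thm:supportbr} from rational $H$ and $\Imag(z)$ to real data by a deformation argument, using that the set of stability conditions satisfying the support property is open in $\mathrm{Stab}(X)$ (the local-homeomorphism part of Theorem \ref{thm:manifold}).

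First I would observe that Lemma \ref{lem:stabfun} already establishes the stability function property of $Z_{H,\beta}$ on $\CB_{H,k}^{-\Imag(z)}$ for real $H$ and real $\Imag(z)$: its proof relies only on the Bogomolov inequality (Theorem \ref{lem:bg}) and the Hodge Index Theorem, both of which are continuous in $(H,\Imag(z))$, and nowhere uses rationality. Hence only the Harder--Narasimhan property and the support property remain to be checked for real data.

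Next I would choose a sequence of rational pairs $(H_n, z_n) \to (H, z)$ lying in the allowable range of Theorems \ref{thm:stab} and \ref{thm:supportbr}, so that each $\sigma_n = (Z_{H_n,\beta}, \CB_{H_n,k}^{-\Imag(z_n)})$ is a Bridgeland stability condition satisfying the support property via the quadratic form $Q_1^{(n)}$ of Definition \ref{defn:qs}. The constants $D_{H_n}$ and $D_{\beta}$ depend continuously on $(H_n, z_n)$, so for $n$ sufficiently large a single quadratic form $Q$ witnesses the support property of all $\sigma_n$ uniformly. By the support-property-controlled version of Bridgeland's deformation theorem, there is an open neighborhood $U_n \subseteq \mathrm{Stab}(X)$ of $\sigma_n$ in which every central charge lifts uniquely to a stability condition whose heart is a small tilt of $\CB_{H_n,k}^{-\Imag(z_n)}$. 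For $n \gg 0$ the central charge $Z_{H,\beta}$ lies in $U_n$, producing a stability condition $\tilde{\sigma} = (Z_{H,\beta}, \tilde{\CA})$ satisfying the support property via $Q$.

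It remains to identify $\tilde{\CA}$ with the heart $\CB_{H,k}^{-\Imag(z)}$ defined by the double tilt. The objects that move between $\tilde{\CA}$ and $\CB_{H_n,k}^{-\Imag(z_n)}$ under the deformation are precisely the semistable objects whose phase crosses $0$ or $1$; in our setting these are exactly the $\mu_H$-semistable sheaves with slope in the half-open interval between $-\Imag(z_n)$ and $-\Imag(z)$, while the twists $\CO_C(i)$ are unaffected because $k$ is a fixed integer and the strict inequalities $k + \tfrac{n}{2} < \beta \cdot C < k + \tfrac{n}{2} + 1$ ensure that no $\CO_C(i)$ crosses the boundary of $\CF_{H,k}$ under a small deformation. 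I expect the main obstacle to be exactly this heart-matching step: one must reconcile Bridgeland's abstract phase-based description of $\tilde{\CA}$ with the explicit torsion pairs $(\CT_H^a, \CF_H^a)$ and $(\CT_{H,k}^a, \CF_{H,k}^a)$ from Section \ref{sec:stab}. Once the identification $\tilde{\CA} = \CB_{H,k}^{-\Imag(z)}$ is established, the support property for $(Z_{H,\beta}, \CB_{H,k}^{-\Imag(z)})$ follows immediately from the uniform quadratic form $Q$.
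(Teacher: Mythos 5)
Your overall strategy---approximate $(H,\Imag(z))$ by rational data where Theorem \ref{thm:supportbr} applies, then use the support-property-controlled deformation theorem to reach the real point---is the same as the paper's, but the two arguments diverge in what they identify as the remaining difficulty. The paper takes the open deformation neighborhoods $U_n$ around the rational points as covering the wall (citing \cite[Proposition A.5]{BMS}) and then worries only about \emph{consistency}: that deforming from two different rational basepoints gives the same stability condition on the overlap $U_1\cap U_2$, which it settles by observing that the overlap contains geometric stability conditions where agreement is already known. You instead worry about identifying the deformed heart $\tilde{\CA}$ with the explicit double tilt $\CB_{H,k}^{-\Imag(z)}$---which is arguably the more pertinent issue for the theorem as stated, since the statement names that specific heart---but you explicitly leave this step as a sketch ("I expect the main obstacle to be exactly this heart-matching step"), so your write-up is not yet a complete proof. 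Your heuristic for why it should work (only slope-semistable sheaves with $\nu_H$ between $-\Imag(z_n)$ and $-\Imag(z)$ move, and the strict inequalities on $k$ protect the $\CO_C(i)$) is the right picture, but it needs to be carried out.

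One step that I would not accept as stated is the claim that the constants $D_{H_n}$ depend continuously on $(H_n,z_n)$, so that a single quadratic form $Q$ works uniformly for $n\gg 0$. The constants $m_1,m_2$ in the definition of $D_H$ are maxima of $H\cdot\ch_1(F)$ over slope-semistable sheaves with $H\cdot\ch_1(F)<-\Imag(z)$; for rational $H$ this is a maximum over a discrete set, and as $(H_n,z_n)$ varies these maxima can approach $-\Imag(z)$ from below (indeed for irrational limits the relevant set of values need not be discrete at all), in which case $m_i\to 0$ and $D_{H_n}\to\infty$. If the quadratic forms degenerate, the deformation neighborhoods $U_n$ may shrink faster than $\mathrm{dist}(\sigma_n,\sigma)$, and the conclusion "$Z_{H,\beta}\in U_n$ for $n\gg 0$" does not follow. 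The paper's own assertion that the $U_n$ cover the wall is equally terse on this point, so you are in good company, but if you want your argument to be self-contained you need either a uniform lower bound on $m_1,m_2$ along your chosen sequence or a different witness for the support property near the limit.
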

\begin{proof}
	By Theorem \ref{thm:supportbr} this holds for $H$ and $-\Imag(z)$ in $\BQ$. Then we can deform these stability conditions to have stability conditions on $\BR$. It remains to show that this is well-defined. This holds by an argument similar to those in  \cite[Section 5]{Toda14} and  \cite[Appendix B]{BMS}.
	
	For each stability condition $\sigma_{H,\beta}=(Z_{H,\beta},\CB_{H,k}^{-\Imag(z)})$ with $\Imag(z)$ and $H$ rational, we can obtain an open subset of the space of stability conditions by deforming $\sigma_{H,\beta}$ \cite[Proposition A.5]{BMS}. This gives a cover of the wall of the geometric chamber.
	
	If $\sigma_{H_1,\beta_1}$ and $\sigma_{H_2,\beta_2}$ are two such stability conditions, and $U_1$ and $U_2$ are the corresponding open subsets, it remains to show that deforming $\sigma_{H_1,\beta_1}$ in $U_1$ and $\sigma_{H_2,\beta_2}$ in $U_2$ gives the same stability conditions in $U_1 \cap U_2$. It would suffice to show that there exists a stability condition $\sigma_{H,\beta} \in U_1 \cap U_2$ where this holds. But $U_1 \cap U_2$ contains stability conditions in the geometric chamber of ${\rm Stab}(X)$. Since this holds inside the geometric chamber, it thus holds on the wall.
\end{proof}

\section{Wall-crossing}

\label{sec:moduli}

We now consider a stability condition $\tau$ across the wall constructed in the previous section. We will construct a moduli space $M_{\tau}([\CO_x])$ for stable objects of class $[\CO_x]$. First we determine the $\tau$-stable objects of this class by deforming $\sigma_{H,\beta}$.

\begin{lem}
	\label{lem:jh}
	$\CO_C(k+1)$ and $\CO_C(k)[1]$ are simple objects in $\CB_{H,k}^{-\Imag(z)}$.
\end{lem}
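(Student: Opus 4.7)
\noindent\emph{Proof plan.}
The strategy is to verify that both objects lie in $\CB_{H,k}^{-\Imag(z)}$, and then for each $E\in\{\CO_C(k+1),\CO_C(k)[1]\}$ and each short exact sequence $0\to A\cx\to E\to B\cx\to 0$ in $\CB_{H,k}^{-\Imag(z)}$, pass to the long exact sequence of $\CA_H^{-\Imag(z)}$-cohomology and use the torsion pair $(\CT_{H,k}^{-\Imag(z)},\CF_{H,k}^{-\Imag(z)})$ to force $A\cx\in\{0,E\}$. For membership, $\CO_C(k+1)$ is torsion, hence lies in $\CT_H^{-\Imag(z)}\subseteq\CA_H^{-\Imag(z)}$, and $\Hom(\CO_C(k+1),\CO_C(i))=H^0(\BP^1,\CO(i-k-1))=0$ for $i\le k$ places it in $\CT_{H,k}^{-\Imag(z)}\subseteq\CB_{H,k}^{-\Imag(z)}$; the sheaf $\CO_C(k)$ is a defining generator of $\CF_{H,k}^{-\Imag(z)}$, so $\CO_C(k)[1]\in\CB_{H,k}^{-\Imag(z)}$.

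For $\CO_C(k+1)$ the long exact sequence gives $H^{-1}_{\CA}(A\cx)=0$ together with a four-term sequence $0\to H^{-1}_{\CA}(B\cx)\to H^0_{\CA}(A\cx)\xrightarrow{\phi}\CO_C(k+1)\to H^0_{\CA}(B\cx)\to 0$. The image of $\phi$ is a coherent subsheaf of $\CO_C(k+1)$, so equals $\CO_C(j)$ for some $j\le k+1$ or is zero. The case $j\le k$ produces a quotient of $H^0_{\CA}(A\cx)$ isomorphic to $\CO_C(j)$, contradicting $H^0_{\CA}(A\cx)\in\CT_{H,k}^{-\Imag(z)}$. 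If $\phi=0$, then $H^0_{\CA}(A\cx)\cong H^{-1}_{\CA}(B\cx)$ lies in $\CT_H^{-\Imag(z)}\cap\CF_H^{-\Imag(z)}=0$, so $A\cx=0$. If $j=k+1$, then $H^0_{\CA}(B\cx)=0$ and $H^{-1}_{\CA}(B\cx)$ is simultaneously torsion (as an object of $\CF_{H,k}^{-\Imag(z)}$) and torsion-free (as an object of $\CF_H^{-\Imag(z)}$), hence zero, so $A\cx=\CO_C(k+1)$.

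For $\CO_C(k)[1]$ the analogous sequence yields $H^0_{\CA}(B\cx)=0$ and $0\to H^{-1}_{\CA}(A\cx)\to\CO_C(k)\to H^{-1}_{\CA}(B\cx)\to H^0_{\CA}(A\cx)\to 0$, with $H^{-1}_{\CA}(A\cx),H^{-1}_{\CA}(B\cx)\in\CF_{H,k}^{-\Imag(z)}$ and $H^0_{\CA}(A\cx)\in\CT_{H,k}^{-\Imag(z)}$. The map $H^{-1}_{\CA}(A\cx)\hookrightarrow\CO_C(k)$ is either zero or of the form $\CO_C(j)\hookrightarrow\CO_C(k)$ with $j\le k$, and its zero-dimensional sheaf cokernel injects into $H^{-1}_{\CA}(B\cx)$. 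A short induction on extension length shows every object of $\CF_{H,k}^{-\Imag(z)}$ is pure of dimension one on $X$, so this cokernel must vanish. In the subcase $j=k$, the isomorphism $H^0_{\CA}(A\cx)\cong H^{-1}_{\CA}(B\cx)$ places a single object in $\CT_{H,k}^{-\Imag(z)}\cap\CF_{H,k}^{-\Imag(z)}$; but any nonzero iterated extension in $\CF_{H,k}^{-\Imag(z)}$ surjects onto its outermost composition factor $\CO_C(i)$, $i\le k$, contradicting $\CT_{H,k}^{-\Imag(z)}$-membership, so this intersection is zero and $A\cx=\CO_C(k)[1]$. The subcase $H^{-1}_{\CA}(A\cx)=0$ leaves $0\to\CO_C(k)\to H^{-1}_{\CA}(B\cx)\to H^0_{\CA}(A\cx)\to 0$; applying $\Hom(-,\CO_C(k))$ and using $\Hom(H^0_{\CA}(A\cx),\CO_C(k))=0$ together with the outermost-factor surjection $H^{-1}_{\CA}(B\cx)\twoheadrightarrow\CO_C(i_m)\hookrightarrow\CO_C(k)$ forces $H^{-1}_{\CA}(B\cx)\cong\CO_C(k)$ and $H^0_{\CA}(A\cx)=0$, whence $A\cx=0$.

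The principal obstacle will be the structural analysis of $\CF_{H,k}^{-\Imag(z)}$: specifically, (a) the purity-of-dimension-one claim, and (b) the existence of a quotient onto some generator $\CO_C(i)$, $i\le k$. Both follow from induction on the number of $\CO_C(i)$-factors in an iterated extension, exploiting that subsheaves of $\iota_*\CO_{\BP^1}(i)$ in $\Coh(X)$ are themselves of the same form, but it is these two facts that carry the real content of the argument.
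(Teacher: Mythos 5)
Your overall strategy (pass to the long exact sequence of cohomology of the triangle $A\cx\to E\to B\cx$ and squeeze with the torsion pairs) is the same as the paper's, and your treatment of $\CO_C(k)[1]$ is essentially correct. But the $\CO_C(k+1)$ half has a genuine gap at its crucial case, caused by conflating the two torsion pairs in play. When $j=k+1$ you are left with a short exact sequence $0\to H^{-1}_{\CA}(B\cx)\to A\cx\to\CO_C(k+1)\to 0$ with $H^{-1}_{\CA}(B\cx)\in\CF_{H,k}^{-\Imag(z)}$, and you kill $H^{-1}_{\CA}(B\cx)$ by declaring it ``torsion-free (as an object of $\CF_H^{-\Imag(z)}$).'' That is false: $\CF_{H,k}^{-\Imag(z)}=\langle\CO_C(i)\mid i\le k\rangle$ is the torsion part of the \emph{second} tilt and consists of torsion sheaves supported on $C$; it sits inside $\CT_H^{-\Imag(z)}$, not inside $\CF_H^{-\Imag(z)}$, so there is no torsion/torsion-free clash and the object has no reason to vanish on those grounds. (The same confusion appears, harmlessly, when you invoke $\CT_H^{-\Imag(z)}\cap\CF_H^{-\Imag(z)}=0$ in the $\phi=0$ case; the intersection you actually need is $\CT_{H,k}^{-\Imag(z)}\cap\CF_{H,k}^{-\Imag(z)}=0$, which does hold by the torsion-pair axioms.)

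What actually has to be proved in the $j=k+1$ case is that no nontrivial extension $0\to F\to A\to\CO_C(k+1)\to 0$ with $0\neq F\in\CF_{H,k}^{-\Imag(z)}$ can lie in $\CT_{H,k}^{-\Imag(z)}$; if one did, it would be a proper nonzero subobject of $\CO_C(k+1)$ in $\CB_{H,k}^{-\Imag(z)}$ with quotient $F[1]$, and the lemma would be false. The point is that any such $A$ satisfies $\Hom(A,\CO_C(l))\neq 0$ for some $l\le k$: since $H^0(\CO_C(i-k-1-n))=0$, the group $\Ext^1_X(\CO_C(k+1),\CO_C(i))$ is computed on the curve, so for $F=\CO_C(i)$ the extension $A$ is a rank-two bundle $\CO_{\BP^1}(a)\oplus\CO_{\BP^1}(b)$ on $C\cong\BP^1$ with $a+b=k+1+i\le 2k+1$, forcing $\min(a,b)\le k$ and hence a nonzero map $A\to\CO_C(\min(a,b))$; the case of general $F$ then follows by induction on the length of a filtration of $F$ by the sheaves $\CO_C(i)$. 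This computation is the real content of the $\CO_C(k+1)$ statement and is absent from your argument; by contrast, the two facts you flag as carrying the content --- purity of objects of $\CF_{H,k}^{-\Imag(z)}$ and the outermost-factor surjection --- are routine, and you use them correctly in the $\CO_C(k)[1]$ half.
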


\begin{proof}
	Suppose $A\cx$ is a subobject of $\CO_C(k)[1]$ in $\CB_{H,k}^{-\Imag(z)}$. Then there is an exact triangle $A\cx \rightarrow \CO_C(k)[1] \rightarrow B\cx$ for some $B\cx \in \CB_{H,k}^{-\Imag(z)}$. Since we assume $A\cx$ and $B\cx$ are in the heart $\CB_{H,k}^{-\Imag(z)}$ we know that $A\cx$ and $B\cx$ have cohomology only in degrees $-1$ and $0$. Hence we get the following long exact sequence by taking cohomology in $\Coh(X)$. $$0 \rightarrow H^{-1}(A\cx) \rightarrow \CO_C(k) \rightarrow H^{-1}(B\cx) \rightarrow H^0(A\cx) \rightarrow 0.$$ 
	Further, since $B\cx$ is a quotient of $\CO_C(k)[1]$, which lies in $\CF_{H,k}^{-\Imag(z)}$, we have $B\cx \in \CF_{H,k}^{-\Imag(z)}$.
	
	We see from the sequence above that $H^{-1}(A\cx)$ is a sheaf which injects into $\CO_C(k)$. This leaves only a few possiblities for which sheaf $H^{-1}(A\cx)$ can be.
	If $H^{-1}(A\cx)$ is a proper subsheaf of $\CO_C(k)$, then $H^{-1}(A\cx) \cong \CO_C(l)$ for some $l<k$. The quotient $H^{-1}(A\cx) \rightarrow \CO_C(k)$ is then supported on points. But such a quotient could not inject into $H^{-1}(B\cx)$, since all sheaves supported on points lie in $\CT_{H,k}^{-\Imag(z)}$.
	
	This leaves only the possibility that $H^{-1}(A\cx)$ is not a proper subsheaf of $\CO_C(k)$. That is, $H^{-1}(A\cx)$ is $0$ or $\CO_C(k)$. If $H^{-1}(A\cx) \cong \CO_C(k)$, then $H^{-1}(B\cx) \cong H^0(A\cx)$. This implies that these sheaves are both $0$, and $A\cx \cong \CO_C(k)[1]$. If $H^{-1}(A\cx) \cong 0$ then since $H^{-1}(B\cx)$ lies in $\CF_{H,k}^{-\Imag(z)}$, $H^0(A\cx)\cong 0$ and $B\cx \cong \CO_C(k)[1]$. 
	
	Now suppose $A\cx$ is a subobject of $\CO_C(k+1)$ and fits into an exact triangle $A\cx \rightarrow \CO_C(k+1) \rightarrow B\cx$. Again, taking cohomology with respect to $\CA_{H}^{-\Imag(z)}$ and ${\rm Coh}(X)$ separately, we can deduce that $A\cx$ is a sheaf supported on $C$, and that there is an exact sequence $$0 \rightarrow H^{-1}(B\cx) \rightarrow A\cx \rightarrow \CO_C(k+1) \rightarrow H^0(B\cx)\rightarrow 0.$$  Further $H^{-1}(B\cx) \in \CF_{H,k}^{-\Imag(z)}$, and $H^0(B\cx)$ is supported on $C$ or points. 
	
	If $H^0(B\cx)$ were supported on points, then the kernel of the map $\CO_C(k+1) \rightarrow H^0(B\cx)$ would be a sheaf in $\CF_{H,k}^{-\Imag(z)}$, from which $H^0(A\cx)$ could have no morphisms. And so $H^0(B\cx)$ can be only $\CO_C(k+1)$ or $0$. In the first case, $A\cx\cong 0$ and $B\cx\cong \CO_C(k+1)$. In the second case, $A\cx\cong \CO_x$ and $B\cx\cong 0$. 
\end{proof}

\begin{lem}
	\label{prop:otherx}
	If $x \in X \setminus C$, then $\CO_x$ is $\sigma_{H,\beta}$-stable. If $x \in C$, $\CO_x$ is strictly $\sigma_{H,\beta}$-semistable, destabilized by the exact triangle $$\CO_C(k+1)\rightarrow \CO_x \rightarrow \CO_C(k)[1].$$
\end{lem}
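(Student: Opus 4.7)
My plan is to compute central charges to confirm all three phases agree, exhibit the destabilizing triangle for $x \in C$, and for $x \notin C$ rule out phase-$1$ proper subobjects via sheaf cohomology.

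First, using the resolution $0 \to \CO_X(D - C) \to \CO_X(D) \to \CO_C(j) \to 0$ with $D \cdot C = j$, a direct computation gives $\ch(\CO_C(j)) = (0, [C], j + n/2)$. Combined with $H \cdot C = 0$, this yields $Z_{H,\beta}(\CO_C(j)) = \beta \cdot C - j - n/2$, which is real. The hypothesis $k + n/2 < \beta \cdot C < k + n/2 + 1$ then makes $Z_{H,\beta}(\CO_C(k+1)) < 0$ and $Z_{H,\beta}(\CO_C(k)[1]) = (k + n/2) - \beta \cdot C < 0$, summing to $-1 = Z_{H,\beta}(\CO_x)$. Hence all three objects have phase $1$. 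For $x \in C$, multiplying by the section of $\CO_C(1)$ vanishing at $x$ gives $0 \to \CO_C(k) \to \CO_C(k+1) \to \CO_x \to 0$ in $\Coh(X)$, which rotates to the triangle $\CO_C(k+1) \to \CO_x \to \CO_C(k)[1]$. All three terms lie in $\CB_{H,k}^{-\Imag(z)}$: the inclusion $\CO_C(k+1) \in \CT_{H,k}^{-\Imag(z)}$ follows from $\Hom(\CO_C(k+1), \CO_C(i)) = H^0(\BP^1, \CO(i-k-1)) = 0$ for $i \leq k$; the term $\CO_C(k)[1]$ is in the heart by construction of the tilt; and $\CO_x$ is a torsion sheaf with no maps to $\CO_C(i)$. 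Thus the triangle is a short exact sequence in the heart with $\CO_C(k+1)$ a proper subobject of the same phase, so $\CO_x$ is strictly semistable.

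For $x \notin C$, suppose $A \hookrightarrow \CO_x$ is a nonzero subobject in $\CB_{H,k}^{-\Imag(z)}$ with $\phi(A) = 1$, i.e., $\Imag Z_{H,\beta}(A) = 0$. The long exact $\Coh(X)$-cohomology sequence of $A \to \CO_x \to B$, using that $\CO_x$ sits in degree $0$ and $B$ has $\Coh$-cohomology in degrees $-1, 0$, forces $H^{-1}(A) = 0$; so $A$ is a sheaf, and the sheaf morphism $A \to \CO_x$ has image $0$ or $\CO_x$ since $\CO_x$ is simple in $\Coh(X)$. If the image is $0$, the cone is $\CO_x \oplus A[1]$, and $A[1] \in \CB_{H,k}^{-\Imag(z)}$ forces $A \in \CF_{H,k}^{-\Imag(z)}$; but $A$ being a sheaf in the heart with trivial $\CA_H^{-\Imag(z)}$-cohomology in degree $-1$ also places $A$ in $\CT_{H,k}^{-\Imag(z)}$, so $A = 0$ by the torsion pair property. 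If the image is $\CO_x$, write the sheaf sequence $0 \to K \to A \to \CO_x \to 0$ with $K = H^{-1}(B) \in \CF_{H,k}^{-\Imag(z)}$. The condition $A \in \CT_H^{-\Imag(z)}$ combined with $\Imag Z_{H,\beta}(A) = 0$ forces $\ch_0(A) = 0$ (otherwise $\nu_H(A) > -\Imag(z)$ would give strictly positive imaginary part), so $A$ is pure torsion with $H \cdot \ch_1(A) = 0$, hence supported on $C$ and points.

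The main obstacle is ruling out a nonzero $K$. The key observation is that $\CF_{H,k}^{-\Imag(z)}$ consists of sheaves with no $0$-dimensional subsheaf, since iterated extensions of the pure $1$-dimensional sheaves $\CO_C(j)$ remain pure $1$-dimensional. Let $T^0(A) \subseteq A$ be the maximal $0$-dimensional subsheaf; since $A/T^0(A)$ is pure $1$-dimensional supported on $C$ and $x \notin C$, the map $A \to \CO_x$ factors through $T^0(A) \to \CO_x$, and the sheaf-level kernel $K$ fits into $0 \to \ker(T^0(A) \to \CO_x) \to K \to A/T^0(A) \to 0$. Purity of $K$ forces $\ker(T^0(A) \to \CO_x) = 0$, so $T^0(A) \cong \CO_x$ and $K \cong A/T^0(A) \in \CF_{H,k}^{-\Imag(z)}$; the extension $0 \to \CO_x \to A \to K \to 0$ then splits since $\Ext^1(K, \CO_x) = 0$ by disjoint supports. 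If $K$ were nonzero, its extension-filtration structure would give a surjection onto some $\CO_C(j)$ with $j \leq k$, contradicting $A \in \CT_{H,k}^{-\Imag(z)}$; hence $K = 0$ and $A \cong \CO_x$.
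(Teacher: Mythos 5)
Your proof is correct and follows essentially the same route as the paper: exhibit the triangle $\CO_C(k+1)\to\CO_x\to\CO_C(k)[1]$ as a short exact sequence of phase-$1$ objects in the heart for $x\in C$, and for $x\notin C$ analyze the $\Coh(X)$-cohomology of a would-be phase-$1$ subobject $A\hookrightarrow\CO_x$ to force $A\cong\CO_x$. The paper simply asserts that the destabilizing configuration "can only exist when $x\in C$," whereas you supply the missing justification via the purity of objects of $\CF_{H,k}^{-\Imag(z)}$ and the splitting $\Ext^1(K,\CO_x)=0$; this is a welcome elaboration, not a different method.
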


\begin{proof}
	Since $\CO_x$ is stable inside the geometric chamber, it is either $\sigma_{H,\beta}$-stable or it is $\sigma_{H,\beta}$-semistable. Suppose it is semistable. Then there is an exact triangle $$A\cx \rightarrow \CO_x \rightarrow B\cx$$ in $\CB_{H,k}^{-\Imag(z)}$ destabilizing $\CO_x$. Taking cohomology, we see that $A\cx$ is a sheaf, and that $H^0(B\cx)$ is either $0$ or $\CO_x$. In the latter case, $H^{-1}(B\cx) \cong H^0(A\cx) \cong 0$, so $B\cx \cong \CO_x$. 
	
	In the first case, we see $A\cx$ must be a torsion sheaf supported on $C$ or points, and $B\cx \cong F[1]$ for some sheaf $F \in \CF_{H,k}^{-\Imag(z)}$. Such a sequence can only exist when $x \in C$, so otherwise $\CO_x$ is stable. For points $x$ on $C$, the sequence $\CO_C(k+1) \rightarrow \CO_x \rightarrow \CO_C(k)[1]$ destabilizes $\CO_x$. 
\end{proof}

\begin{lem}
	\label{lem:jhtau}
	Suppose $E\cx$ is of class $[\CO_x]$ and is $\sigma_{H,\beta}$-semistable, then the only possible  Jordan-H\"older factors of $E\cx$ are $\CO_C(k+1)$ and $\CO_C(k)[1]$, or $\CO_x$ for some $x \not\in C$.
\end{lem}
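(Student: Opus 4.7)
Since $\ch(\CO_x) = (0,0,1)$, we have $Z_{H,\beta}(E\cx) = -1$, so $E\cx$ has phase $1$ and every Jordan--H\"older factor $F\cx$ is $\sigma_{H,\beta}$-stable with $Z_{H,\beta}(F\cx) \in \BR_{<0}$. The plan is to classify all such stable phase-$1$ objects in $\CB_{H,k}^{-\Imag(z)}$ using the double tilt structure, then eliminate unwanted possibilities via the class sum constraint $\sum \ch_0(F_i\cx) = \ch_0(\CO_x) = 0$.

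Any $F\cx \in \CB_{H,k}^{-\Imag(z)}$ fits in a short exact sequence $P[1] \to F\cx \to T\cx$ with $P \in \CF_{H,k}^{-\Imag(z)}$ and $T\cx \in \CT_{H,k}^{-\Imag(z)}$. Whenever $P \neq 0$, $P[1]$ has phase $1$, so stability of $F\cx$ forces either $P = 0$ or $F\cx \cong P[1]$. In the latter case, if $P$ is a nontrivial iterated extension of $\CO_C(l)$'s then any proper subobject $A \subsetneq P$ in $\CF_{H,k}^{-\Imag(z)}$ shifts to give $A[1]$ as a proper phase-$1$ subobject of $P[1]$ in $\CB_{H,k}^{-\Imag(z)}$, and if $P \cong \CO_C(i)$ with $i < k$ then rotating the $\Coh$-sequence $0 \to \CO_C(i) \to \CO_C(k) \to Q \to 0$ (with $Q$ torsion on $C$) gives a short exact sequence $Q \to \CO_C(i)[1] \to \CO_C(k)[1]$ in $\CB_{H,k}^{-\Imag(z)}$, exhibiting $Q$ as a proper phase-$1$ subobject of $\CO_C(i)[1]$. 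Thus the only stable option here is $F\cx \cong \CO_C(k)[1]$, consistent with Lemma~\ref{lem:jh}. If instead $P = 0$, decompose $F\cx = T\cx$ as $G[1] \to F\cx \to S$ with $G \in \CF_H^{-\Imag(z)}$ (automatically torsion-free) and $S \in \CT_H^{-\Imag(z)}$; using $\Imag(Z_{H,\beta}(S)) \geq 0$ for any $S \in \CT_H^{-\Imag(z)}$, the phase-$1$ condition on $F\cx$ forces $\nu_H(G) = -\Imag(z)$ whenever $G \neq 0$, and then $G[1]$ is a proper phase-$1$ subobject of $F\cx$ unless $S = 0$.

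So either $S = 0$ (and $F\cx = G[1]$ with $\ch_0(F\cx) = -\ch_0(G) < 0$) or $G = 0$ (and $F\cx = S$ a sheaf). In the second case, $\ch_0(S) = 0$ (else $\nu_H(S) = -\Imag(z)$ contradicts $S \in \CT_H^{-\Imag(z)}$), so $S$ is a torsion sheaf with $H \cdot \ch_1(S) = 0$, supported on points or on $C$ by the choice of $H$. Stability plus $S \in \CT_{H,k}^{-\Imag(z)}$ then restricts $S$ to $\CO_x$ with $x \notin C$ (by Lemma~\ref{prop:otherx}) or to $\CO_C(k+1)$: any other candidate supported on $C$ admits $\CO_C(k+1)$ as a proper phase-$1$ sub- or quotient object via the sequence $0 \to \CO_C(k+1) \to \CO_C(l) \to \text{pts} \to 0$, and any zero-dimensional piece of $S$ splits off as a subobject of phase $1$. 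Finally, the class sum constraint eliminates the $G[1]$ subcase, since no remaining candidate has positive $\ch_0$ to offset a negative one. This leaves $\CO_C(k+1)$, $\CO_C(k)[1]$, and $\CO_x$ (with $x \notin C$) as the only possible Jordan--H\"older factors of $E\cx$. The main obstacle will be keeping the cohomology bookkeeping across the two tilts consistent so that the case analysis of stable phase-$1$ objects is genuinely exhaustive.
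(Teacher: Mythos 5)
Your proof is correct and follows essentially the same strategy as the paper: reduce to classifying the stable phase-one objects via the two torsion-pair decompositions $P[1]\to F\cx\to T\cx$ and then $G[1]\to T\cx\to S$, and use the rotated triangle $Q\to \CO_C(i)[1]\to\CO_C(k)[1]$ (resp.\ the subobject $\CO_C(k+1)$) to cut the candidates down to $\CO_C(k)[1]$, $\CO_C(k+1)$, and $\CO_x$ with $x\notin C$. The one place you genuinely diverge is in how the positive-rank case is excluded. The paper asserts at the outset that the Jordan--H\"older factors lie in the saturation of the lattice generated by $[\CO_C]$ and $[\CO_x]$, which forces $\ch_0=0$ and $H\cdot\ch_1=0$ for every factor and silently removes the possibility of a factor of the form $G[1]$ with $G$ torsion-free of slope $-\Imag(z)$; that lattice claim is not justified there, since the condition $\Imag(Z_{H,\beta})=0$ only cuts out a corank-one sublattice. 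You instead keep that extra family of candidates in play and kill it at the end by observing that the ranks of the Jordan--H\"older factors must sum to $\ch_0(\CO_x)=0$ while every surviving candidate has $\ch_0\le 0$. This is a more self-contained treatment of a point the paper glosses over, at the cost of a slightly longer case analysis; on the remaining delicate point (why a stable sheaf in $\CT_{H,k}^{-\Imag(z)}$ with one-dimensional support on $C$ must be $\CO_C(k+1)$) you are at the same level of detail as the paper.
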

\begin{proof}
	The Jordan-H\"older factors of $\CO_x$ must lie in the saturation of the lattice generated by $\CO_C$ and $\CO_x$ in $\CB_{H,k}^{-\Imag(z)}$. The objects $\CO_C(k+1)$, $\CO_C(k)[1]$, and $\CO_x$ are simple objects in this lattice. Suppose there is another simple object $E\cx$ in this lattice. Note that $\ch_0(E\cx)=0$, and $H\cdot \ch_1(E\cx)=0$. 
	
	We know $E\cx$ fits into an exact triangle $F[1] \rightarrow E\cx \rightarrow T\cx$ where $F \in \CF_{H,k}^{-\Imag(z)}$ and $T\cx \in \CT_{H,k}^{-\Imag(z)}$. So one of $F$ and $T\cx$ must be $0$. 
	If $E\cx=F[1]$, and is simple, we claim $E\cx \cong \CO_C(k)[1]$. To see this, note that since $\CF_{H,k}^{-\Imag(z)}$ was constructed as the extension closure of the set of objects of the form $\CO_C(l)$ for some $l \leq k$, all objects in $\CF_{H,k}^{-\Imag(z)}[1]$ have a morphism to $\CO_C(l)[1]$ for some $l\leq k$ which is surjective in $\CB_{H,k}^{-\Imag(z)}$. Since $E\cx=0$, we see $E\cx \cong \CO_C(l)[1]$ for this $l$. But then if $l \neq k$, there is an exact triangle in $\CB_{H,k}^{-\Imag(z)}$, $$T \rightarrow \CO_C(l)[1] \rightarrow \CO_C(k)[1]$$ where $T$ is a sheaf supported on points of length $k-l$. Hence $k=l$.
	
	If $E\cx=T\cx$, then since $H \cdot \ch_1(E\cx)=\ch_0(E\cx)=0$, $E\cx$ must be a sheaf supported on $C$ or on points. If $E\cx$ is supported on points and simple, then $E\cx$ is a skyscraper sheaf $\CO_x$ where $x \not\in C$. If $E\cx$ is supported on $C$, and $E\cx \in \CT_{H,k}^{-\Imag(z)}$, then $E\cx$ has $\CO_C(k+1)$ as a subobject. Hence since $E\cx$ is simple, $E\cx \cong \CO_C(k+1)$. 
\end{proof}

We will study the moduli space of $\tau$-stable objects of class $[\CO_x]$, where $\tau$ is a stability condition across the wall along which $\sigma_{H,\beta}$ lies. In order to study objects of this class, we will look at a local model and study a neighbourhood of the curve $C$ in $X$. Let $\CDb_C(X)$ denote the subcategory of $\CDb(X)$ of objects supported on $C$. Let $\widehat{X}$ be the completion of $X$ at $C$.

\begin{lem}
	\label{lem:local}
	$\CDb_C(X) \cong \CDb_C(\widehat{X})$.
\end{lem}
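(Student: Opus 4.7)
My plan is to prove Lemma \ref{lem:local} by first establishing the equivalence at the level of hearts and then promoting to bounded derived categories. Let $\Coh_C(X) \subset \Coh(X)$ denote the (Serre) subcategory of coherent sheaves set-theoretically supported on $C$, and similarly for $\widehat{X}$.

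The first step is to show $\Coh_C(X) \cong \Coh_C(\widehat{X})$. Any coherent sheaf supported on $C$ is annihilated by some power $\CI_C^n$ of the ideal sheaf, hence is really a coherent sheaf on the $n$-th infinitesimal neighborhood $nC \subset X$. The infinitesimal neighborhoods $nC$ are intrinsic to any neighborhood of $C$, and in particular agree whether computed in $X$ or in $\widehat{X}$. Passing to the colimit in $n$ gives
$$\Coh_C(X) = \varinjlim_n \Coh(nC) = \Coh_C(\widehat{X}).$$

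The second step is to verify that this equivalence of hearts preserves $\Ext$ groups, i.e.\ that $\Ext^i_X(E,F) \cong \Ext^i_{\widehat{X}}(E,F)$ for $E,F \in \Coh_C(X)$. For this I would compute Ext sheaves locally by taking a locally free resolution of $E$ on a neighborhood of $C$; since the morphism $\widehat{X} \to X$ is flat, pullback commutes with such a resolution, and the resulting $\mathcal{E}xt$ sheaves are again supported on $C$. Completion is the identity on sheaves supported on $C$ by the first step, so the global $\Ext$ groups agree. This is the main obstacle, as it is the only genuinely analytic/formal input; the rest is categorical bookkeeping.

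Finally I would upgrade to $\CDb_C$. Since $\Coh_C(X)$ is a Serre subcategory of $\Coh(X)$ whose inclusion induces isomorphisms on $\Ext$ (by the argument that any extension of objects supported on $C$ is itself supported on $C$), a standard result gives an equivalence $\CDb(\Coh_C(X)) \xrightarrow{\sim} \CDb_C(X)$, where the right-hand side is the full subcategory of $\CDb(X)$ of complexes with cohomology in $\Coh_C(X)$. The analogous equivalence holds for $\widehat{X}$. Combining these with the equivalence of hearts and of $\Ext$ groups yields
$$\CDb_C(X) \cong \CDb(\Coh_C(X)) \cong \CDb(\Coh_C(\widehat{X})) \cong \CDb_C(\widehat{X}),$$
as desired.
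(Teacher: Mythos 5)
Your proposal is correct and follows essentially the same route as the paper: identify $\Coh_C(X)\cong\Coh_C(\widehat{X})$ via the infinitesimal neighbourhoods $nC$, and then invoke the standard equivalence $\CDb(\Coh_C(-))\cong\CDb_C(-)$ (the paper cites \cite[Proposition 1.7.11]{KaSch} for this). Your intermediate step comparing $\Ext$ groups on $X$ and $\widehat{X}$ is subsumed by that last equivalence, since the $\Ext$ groups are then intrinsic to the abelian category $\Coh_C$, but it does no harm.
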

\begin{proof}
	By Proposition 1.7.11 in \cite{KaSch}, $\CDb_C(X) \cong \CDb(\Coh_C(X))$ and $\CDb_C(\widehat{X}) \cong \CDb(\Coh_C(\widehat{X}))$. It remains to show that $\Coh_C(X) \cong \Coh_C(\widehat{X})$. Any sheaf $\CF \in \Coh_C(X)$ is supported in a finite-order neighbourhood $C_n$ of $C$ in $X$. The embedding $\Coh(C_n) \rightarrow \Coh_C(X)$ is fully faithful. Similarly for $\widehat{X}$, any sheaf in $\Coh_C(\widehat{X})$ is supported on a finite-order neighbourhood of $C$, isomorphic to $C_n$ by construction. Since $\Coh(C_n) \rightarrow \Coh_C(\widehat{X})$ is also a fully faithful embedding, it follows that $\Coh_C(X) \cong \Coh_C(\widehat{X})$. 
\end{proof}

\begin{lem}
	\label{lem:local2}
	$\widehat{X}$ is isomorphic to the completion of ${\rm Tot} \ \CO_{\BP^1}(-n)$ at the $0$-section.
\end{lem}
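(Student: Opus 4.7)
Plan. The strategy is to compare the two formal neighborhoods via their normal bundles and inductively build an isomorphism of the infinitesimal thickenings, using the cohomological simplicity of $\BP^1$.

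First I would identify $N_{C/X}$. Since $C$ is a smooth Cartier divisor in the smooth surface $X$, we have $N_{C/X} \cong \CO_X(C)|_C$, a line bundle of degree $C^2=-n$ on $C\cong\BP^1$. Hence $N_{C/X}\cong\CO_{\BP^1}(-n)$, which matches the normal bundle of the zero section in ${\rm Tot}\,\CO_{\BP^1}(-n)$. Because our formal completions depend only on the inverse system of infinitesimal neighborhoods of $C$, the claim will follow once these neighborhoods are compared on both sides.

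Let $C_m\subseteq X$ be the $m$-th infinitesimal thickening of $C$ cut out by $\CI_C^{m+1}$, and let $Z_m$ be the corresponding thickening of the zero section in ${\rm Tot}\,\CO_{\BP^1}(-n)$. Both fit into short exact sequences on $C$
$$0\to {\rm Sym}^{m+1}N_{C/X}^\vee \to \CO_{C_{m+1}} \to \CO_{C_m}\to 0,$$
in which the kernel is the line bundle $\CO_{\BP^1}((m+1)n)$. I would construct isomorphisms $\phi_m\colon C_m\xrightarrow{\sim} Z_m$ by induction, starting from $\phi_0={\rm id}_C$. Given $\phi_m$, standard deformation theory identifies the obstruction to extending it to $\phi_{m+1}$ with a class in a cohomology group of the form $H^1(\BP^1,L)$, where $L$ is built from $T_C=\CO(2)$, $N_{C/X}=\CO(-n)$, and ${\rm Sym}^{m+1}N_{C/X}^\vee=\CO((m+1)n)$. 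In each of the standard formulations (either $H^1(C,T_C\otimes{\rm Sym}^{m+1}N_{C/X}^\vee)$ or $H^1(C,N_{C/X}\otimes{\rm Sym}^{m+1}N_{C/X}^\vee)$), the resulting line bundle has nonnegative degree on $\BP^1$ since $n\geq 2$ and $m\geq 0$, so its $H^1$ vanishes and the lift exists. Taking the inverse limit over $m$ produces the desired isomorphism $\widehat{X}\cong\widehat{{\rm Tot}\,\CO_{\BP^1}(-n)}$.

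The main obstacle is the careful bookkeeping of the obstruction theory for extending isomorphisms between successive square-zero thickenings of $C$ inside a smooth surface; this requires the conormal sequence together with the fact that $C$ is a Cartier divisor. Once the relevant obstruction class is identified, its vanishing is immediate because every line bundle in sight has sufficiently large degree on $\BP^1$. Alternatively, one could appeal to a general formal tubular neighborhood result, the formal analogue of Grauert's theorem for contractible rational curves on surfaces, but the inductive argument above is self-contained.
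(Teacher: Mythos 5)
Your argument is correct, but it takes a genuinely different route from the paper's. The paper argues by contraction: since $C$ is an exceptional rational curve, $\widehat{X}$ contracts to a formal surface singularity; the completion of ${\rm Tot}\ \CO_{\BP^1}(-n)$ along the zero section contracts to the same singularity, and uniqueness of the minimal resolution then identifies the two formal surfaces. You instead compare the two formal neighbourhoods directly: you identify $N_{C/X}\cong\CO_{\BP^1}(-n)$ and lift the isomorphism through the successive infinitesimal thickenings, with the obstruction and classification groups being $H^1(\BP^1,T_C\otimes{\rm Sym}^{m+1}N_{C/X}^{\vee})$ and $H^1(\BP^1,N_{C/X}\otimes{\rm Sym}^{m+1}N_{C/X}^{\vee})$, which vanish by a degree count since $n\geq 2$. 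This is the standard ``formal principle''/linearization argument, and your degree computations are right. Each approach buys something: the paper's proof is shorter and immediately exhibits the singularity as the contraction of a $(-n)$-curve (the cone over the rational normal curve of degree $n$), but its assertion that ``there is a unique local singularity to which $\widehat{X}$ contracts'' is essentially equivalent to the lemma and, if unpacked via the theorem on formal functions, rests on the same vanishing of $H^1(\BP^1,\CO(mn))$ that drives your induction. Your version is self-contained, makes the cohomological mechanism explicit, and generalizes to any smooth subvariety whose normal bundle satisfies the analogous vanishing. The one step you should pin down is the precise identification of the torsor/obstruction groups governing extensions of an isomorphism of $m$-th order neighbourhoods (you acknowledge this); it is standard, but it is where the actual content of the argument lives.
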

\begin{proof}
	The curve $C$ is contractible. Up to isomorphism, there is a unique local singularity to which $\widehat{X}$ contracts. Further, the completion of ${\rm Tot} \ \CO_{\BP^1}(-n)$ at the $0$-section is another $-n$-curve, and hence it must contract to the same singularity. This local singularity has a unique minimal resolution, and so $\widehat{X}$ and the completion of ${\rm Tot} \ \CO_{\BP^1}(-n)$ at the $0$-section must be isomorphic.
\end{proof}

We will now construct a family of $\tau$-semistable objects of class $[\CO_x]$ in $K_0(X)$, with the goal of constructing a universal family over $M_{\tau}([\CO_x])$. We will do this by considering stable objects of the form $\CO_x$ for some $x \in X\setminus C$ and stable objects of the form $\eta(y)$ for some $y \in \BP\Ext^1(\CO_C(k+1),\CO_C(k)[1])$ separately, and then gluing along $C$.

Inside the geometric chamber of ${\rm Stab}(X)$, the stable objects of class $[\CO_x]$ are the skyscraper sheaves $\CO_x$ themselves. Hence a family is given by the object $\CO_{\Delta_X}$ in $\CDb(X \times X)$. However, along the wall we have constructed we will construct a new family of $\tau$-stable objects via semistable reduction.

Consider the following diagram.
\begin{center}
	\begin{tikzpicture}
	\matrix (m) [matrix of math nodes,row sep=3em,column sep=4em,minimum width=2em]
	{
		C \times X & X \times X \\
		C & X \\};
	\path[-stealth]
	(m-1-1) edge  (m-2-1)
	edge  node [above] {$j$} (m-1-2)
	(m-2-1.east|-m-2-2) edge
	(m-2-2)
	(m-1-2) edge(m-2-2);
	\end{tikzpicture}
\end{center}

There is an exact triangle in $\CDb(C\times X)$ as follows:
$$\CO_{C\times C}(-k-2,k) \rightarrow \CO_{C \times C}(-k-1,k+1) \rightarrow \CO_{\Delta_C}.$$  
Define $\CE$ by 
\begin{align}
\label{defofe}
\CE \rightarrow \CO_{\Delta_X} \rightarrow j_*\CO_{C\times C}(-k-2,k)[1]
\end{align} where the second map is given by the composition of the map coming from the exact triangle and the restriction map $\CO_{\Delta_X} \rightarrow j_* \CO_{\Delta_C}$.

First, note that $\CE$ is a sheaf. It fits into the exact sequence $$0 \rightarrow j_*\CO_{C \times C}(-k-2,k) \rightarrow \CE \rightarrow \CO_{\Delta_X}\rightarrow 0$$ on $X \times X$. $\CE$ is supported on $(C\times C) \cup_{\Delta_C} \Delta_X$. Using the octahedral axiom, we can say further that $\CE$ fits into the exact sequence $$0 \rightarrow \CO_{\Delta_X}(-C) \rightarrow \CE \rightarrow j_*\CO_{C \times C}(-k-1,k+1) \rightarrow 0.$$ We can see from this that $\CE \cong \CO_S$, where $S$ is the surface $(C\times C) \cup_{\Delta_C} \Delta_X$. 

For any point $x \in X$ there is an inclusion map $j_x \colon x \times X \hookrightarrow X \times X$. If we consider the pullback of \ref{defofe} via $j_x$, we obtain the exact triangle
$$\BL j_x^*\CO_{C\times C}(-k-2,k) \rightarrow \BL j_x^* \CO_S \rightarrow \CO_{x \times x}.$$
If $x \in X \setminus C$, $\BL j_x^* \CO_{C \times C}(-k-2,k) \cong 0$. This shows that $\BL j_x^* \CO_S \cong \CO_{x \times x}$, the skyscraper sheaf of the point $x \times x \in X \times X$. On the other hand, if $x \in C$, $\BL j_x^*\CO_{C \times C}(-k-2,k)\cong \CO_{x \times C}(k)[1]\oplus \CO_{x \times C}(k)$. Hence $\BL j_x^* \CO_S$ fits into an exact sequence $$\CO_{x \times C}(k)[1] \rightarrow \BL j_x^* \CO_S \rightarrow \CO_{x \times x} \rightarrow \CO_{x \times C}(k)[1].$$ The kernel of the map $\CO_{x \times x} \rightarrow \CO_{x \times C}(k)[1]$ is $\CO_{x \times C}(k+1)$, and so this shows that $\BL j_x^* \CO_S$ is isomorphic to a class in $\BP \Ext^1(\CO_C(k+1),\CO_C(k)[1]$, if this class in nontrivial. That is, $\CO_S$ defines a family of $\tau$-stable objects of class $[\CO_x]$.

\begin{lem}
	\label{lem:exts}
	There is a $\BP^{n-1}$ parametrizing $\tau$-stable objects of class $[\CO_x]$ which are not isomorphic to $\CO_x$ for any $x \in X$. 
\end{lem}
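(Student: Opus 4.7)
The plan is to identify the new $\tau$-stable objects of class $[\CO_x]$ as projectivized nontrivial extensions of the Jordan--Hölder factors in the direction opposite to the one that produces $\CO_x$ on the wall. By Lemma \ref{prop:otherx}, a point $x\in C$ has $\CO_x$ destabilized on $\sigma_{H,\beta}$ by the triangle $\okp\to\CO_x\to\oks$; crossing to $\tau$ swaps the phases of $\okp$ and $\oks$, so the stable objects in the same $S$-equivalence class should be nontrivial extensions $\eta\colon\oks\to F\cx_\eta\to\okp$.

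The first step is to identify the parameter space by computing $\Ext^1(\okp,\oks)=\Ext^2(\okp,\CO_C(k))$. The computation in the proof of Lemma \ref{lem:dechoms}, applied with $l_i=k+1$ (so that $k-l_i-n=-n-1<-1$, using $n\geq 2$), yields $\Ext^2(\okp,\CO_C(k))\cong\BC^{n}$. Two nonzero classes $\eta,\eta'$ give isomorphic extensions $F\cx_\eta\cong F\cx_{\eta'}$ exactly when $\eta'=\lambda\eta$ for some $\lambda\in\BC^*$: by Lemma \ref{lem:jh} the objects $\okp$ and $\oks$ are simple and non-isomorphic, so any isomorphism of extensions is induced by a pair of automorphisms of the factors, and $\Aut(\okp)\times\Aut(\oks)=\BC^*\times\BC^*$ acts on $\Ext^1(\okp,\oks)$ by scalars. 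This gives a $\BP^{n-1}$ of isomorphism classes.

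Next I would check, for each nonzero $\eta$, that $F\cx_\eta$ has class $[\CO_x]$ (immediate from additivity of Chern characters on the defining triangle), that $F\cx_\eta$ is not isomorphic to any $\CO_x$ (the long exact cohomology sequence gives $H^{-1}(F\cx_\eta)\cong\CO_C(k)\neq 0$, so $F\cx_\eta$ is not a sheaf), and that $F\cx_\eta$ is $\tau$-stable. For the latter, $F\cx_\eta$ lies in $\CB_{H,k}^{-\Imag(z)}$ and is strictly $\sigma_{H,\beta}$-semistable with Jordan--Hölder factors $\okp$ and $\oks$. Lemma \ref{lem:jhtau} says the only simple objects of the relevant saturated sublattice are $\okp$, $\oks$, and $\CO_y$ for $y\notin C$, so a destabilizing subobject of $F\cx_\eta$ near $\sigma_{H,\beta}$ must be an extension of $\okp$ and $\oks$. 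A direct check shows $\oks$ is the unique nonzero proper subobject of $F\cx_\eta$ in $\CB_{H,k}^{-\Imag(z)}$: a nonzero morphism $\okp\to F\cx_\eta$ would, after composing with $F\cx_\eta\to\okp$, either split the defining triangle (contradicting $\eta\neq 0$) or factor through $\oks[1]$, which is impossible since $\Hom(\okp,\oks[1])$ encodes the extension itself. On the $\tau$-side of the wall the signed perturbation of $Z_{H,\beta}$ produces $\phi_\tau(\oks)<\phi_\tau(\okp)$, hence $\phi_\tau(\oks)<\phi_\tau(F\cx_\eta)<\phi_\tau(\okp)$, giving $\tau$-stability.

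The main obstacle is this last verification: ensuring that no unexpected subobject destabilizes $F\cx_\eta$ after deformation. Lemmas \ref{lem:jh} and \ref{lem:jhtau} already cut the list of candidates for Jordan--Hölder factors down to $\okp$ and $\oks$, so the remaining point is the clean ruling out of $\okp\hookrightarrow F\cx_\eta$ in $\CB_{H,k}^{-\Imag(z)}$ together with a short computation showing which side of the wall corresponds to $\phi(\okp)>\phi(\oks)$; this is a routine perturbation of the formula $Z_{H,\beta}(\CO_C(i))=-i-\tfrac{n}{2}+\beta\cdot C$.
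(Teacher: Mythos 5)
Your proposal is correct and follows essentially the same route as the paper: both identify the new $\tau$-stable objects as nontrivial extensions $\oks \to E\cx \to \okp$ (using Lemma \ref{lem:jhtau} to pin down the Jordan--H\"older factors and the phase inequality $\phi_\tau(\okp)>\phi_\tau(\oks)$ to fix the direction of the extension), and both compute $\Ext^1(\okp,\oks)=\Ext^2(\okp,\CO_C(k))\cong\BC^n$ by the same dual-complex calculation, yielding $\BP^{n-1}$. The extra verifications you supply (that proportional classes give the same object, that $\okp$ does not inject into a nontrivial extension, and that each such extension is genuinely $\tau$-stable) are consistent with, and slightly more explicit than, what the paper leaves implicit.
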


\begin{proof}
	
	Suppose $E\cx$ is a $\tau$-stable object of class $\CO_x$ for some $x \in C$. Then $E\cx$ must be strictly $\sigma_{H,\beta}$-semistable. By Lemma \ref{lem:jhtau} the Jordan-H\"older factors of $E\cx$ must be $[\CO_C(k+1)]$ and $[\CO_C(k)[1]]$. 
	
	We will now work in the local model described in Lemma \ref{lem:local2}. Since the sheaves $\CO_x$ were destabilized by the triangle $$\CO_C(k+1) \rightarrow \CO_x \rightarrow \CO_C(k)[1]$$ we know that $\phi_{\tau}(\CO_C(k+1))>\phi_{\tau}(\CO_C(k)[1])$. Hence since $E\cx$ is $\tau$-stable, it must fit into an exact triangle $$\CO_C(k)[1]\rightarrow E\cx \rightarrow \CO_C(k+1).$$ This means that the new $\tau$-stable objects $E\cx$ of class $[\CO_x]$ are parametrized by $\BP\Ext^1(\CO_C(k+1),\CO_C(k)[1])$.
	
	We can calculate the dimension ${\rm Ext}^1(\CO_C(k+1),\CO_C(k)[1])$ as the dimension of $H^2(X,\CO_C(k) \otimes \CO_C(k+1)^{\vee})$. The sheaf $\CO_C(k+1)$ is quasiisomorphic to the complex $\CO_X(-C)(k+1) \rightarrow \CO_X(k+1)$. Then $\CO_C(k+1)^{\vee}$ is quasiisomorphic to the complex $\CO_X(-k-1) \rightarrow \CO_X(C)(-k-1)$. Tensoring with $\CO_C(k)$, we now want to calculate $H^2(X,\CO_C(-1) \rightarrow \CO_C(-n-1))$. Note that $n>0$, and so there are no morphisms from $\CO_C(-1)$ to $\CO_C(-n-1)$. Hence we must compute $H^2(X,\CO_C(-1)\oplus \CO_C(-n-1)[-1])$ This is the direct sum $H^2(X,\CO_C(-1))\oplus H^1(X,\CO_C(-n-1)) \cong \BC^n$. 
\end{proof}

Now we will show that the extension class $\mathcal{E}$ is nonzero. Further, we will study the map $i \colon C \rightarrow \Ext^1(\CO_C(k+1),\CO_C(k)[1])$ induced by $\mathcal{E}=\CO_S$. We will do computations on the local model described in Lemma \ref{lem:local} and Lemma \ref{lem:local2}.

\begin{lem}
	\label{lem:degree}
	The degree of the map $i \colon C \hookrightarrow \BP^{n-1}$ is $n-1$.
\end{lem}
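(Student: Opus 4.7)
The plan is to identify the map $i\colon C \to \BP^{n-1}$ explicitly with the embedding of $\BP^1$ as a rational normal curve of degree $n-1$, from which $\deg i = n-1$ follows at once. By Lemma~\ref{lem:local} and Lemma~\ref{lem:local2} the whole question is local around $C$, so I work on $\widehat{X}$, the completion of ${\rm Tot}\,\CO_{\BP^1}(-n)$ along its zero section, and cover $C = \BP^1$ by the two standard affines $U_0 = {\rm Spec}\,\BC[u]$ and $U_1 = {\rm Spec}\,\BC[v]$ with $v=1/u$. Retracing the computation of Lemma~\ref{lem:exts} and using $\CO_X(C)|_C \cong \CO_C(-n)$ gives a canonical isomorphism
\[
V \;:=\; \Ext^1(\CO_C(k+1),\CO_C(k)[1]) \;\cong\; H^1(\BP^1,\CO(-n-1)),
\]
which by Serre duality on $\BP^1$ is identified with $H^0(\BP^1,\CO(n-1))^{\vee}$. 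Hence $\BP^{n-1} \cong \BP\bigl(H^0(\BP^1,\CO(n-1))^{\vee}\bigr)$, and in standard \v{C}ech coordinates on $U_0 \cap U_1$ the evaluation functional ${\rm ev}_x$ at a point $x=[a:1]$ is represented by the class of $\tfrac{1}{u-a} = \sum_{j\ge 0} a^j u^{-j-1}$ in $H^1(\BP^1,\CO(-n-1))$.

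The heart of the argument is to show that, up to a nowhere-vanishing scalar depending only on the chosen trivializations of $\CO_C(k)$ and $\CO_C(k+1)$, the extension class $\alpha_x \in V$ coming from $\BL j_x^* \CO_S$ agrees with $\bigl[\tfrac{1}{u-a}\bigr]$. I would prove this by computing $\alpha_x$ as a Yoneda $2$-extension using the resolution $0 \to \CO_X(-C)(k+1) \to \CO_X(k+1) \to \CO_C(k+1) \to 0$ together with its analogue for $\CO_C(k)$: the morphism $\CO_X(-C)(k+1) \to \CO_C(k)[1]$ obtained by combining the destabilizing triangle $\CO_C(k+1) \to \CO_x \to \CO_C(k)[1]$ for $x \in C$ with the explicit form of $\CO_S$ in triangle~\eqref{defofe} is represented locally on $U_0 \cap U_1$ by the Laurent expansion $\tfrac{1}{u-a}$, and after the standard identifications produces exactly the class $\bigl[\tfrac{1}{u-a}\bigr] \in H^1(\BP^1,\CO(-n-1))$.

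Once this is established, $i$ is, up to the fixed line-bundle scaling, the classical embedding $\BP^1 \to \BP\bigl(H^0(\CO_{\BP^1}(n-1))^{\vee}\bigr)$ defined by the complete linear system $|\CO_{\BP^1}(n-1)|$, i.e.\ the rational normal curve of degree $n-1$, giving $\deg i = n-1$. The main obstacle is the middle step: keeping track of the Serre duality identification and of the \v{C}ech cocycle representing $\alpha_x$ carefully enough to match it with ${\rm ev}_x$; everything else amounts to bookkeeping with the resolutions already in the paper and with the defining sequences of $\CO_S$.
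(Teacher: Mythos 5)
Your framework is correct and is a genuinely different, more conceptual route than the paper's: identifying $V=\Ext^1(\CO_C(k+1),\CO_C(k)[1])$ with $H^1(\BP^1,\CO(-n-1))\cong H^0(\BP^1,\CO(n-1))^\vee$ and recognizing the classifying map as $x\mapsto {\rm ev}_x$ would exhibit $i$ as the $(n-1)$-uple embedding on the nose (something the paper only extracts later, in Lemma \ref{lem:tan}, from the jumping of $\Ext^1(E,E)$), and the \v{C}ech representative $\bigl[\tfrac{1}{u-a}\bigr]$ of ${\rm ev}_x$ is the right target. But the proposal as written has a genuine gap: the identification of the extension class $\alpha_x$ of $\BL j_x^*\CO_S$ with $\bigl[\tfrac{1}{u-a}\bigr]$, up to a nowhere-vanishing scalar, is asserted rather than proved, and you yourself flag it as ``the main obstacle.'' That single step carries the entire content of the lemma; without it you cannot even conclude that $\alpha_x\neq 0$.

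For comparison, the paper's proof needs only something strictly weaker --- it observes that the class of $\CE$ lives in $\Ext^1(\CO_{C\times C}(-k-1,k+1),\CO_{C\times C}(n-k-2,k)[1])\cong H^0(\CO_C(n-1))\otimes V$, so that degree $n-1$ follows once the classifying section is shown not to vanish --- and it still devotes its whole length to verifying nonvanishing at a single point $c\in C$, via an explicit free resolution of $\CO_S$ on the local model of Lemma \ref{lem:local2} (the matrices $M_1,M_2,M_3$) and the analysis of $\Hom(\CO_C(k)[1],F\cx)$ through the truncation triangles. Your stronger claim (pinning down the exact cocycle, not just its nonvanishing) requires at least that much computation: one must trace the connecting map of the sequence $0\to\CO_X(-C)(k+1)\to\CO_X(k+1)\to\CO_C(k+1)\to 0$ through the defining triangle (\ref{defofe}) of $\CE$, restrict to $\{x\}\times X$ where the derived pullback contributes the extra term $\CO_{x\times C}(k)[1]\oplus\CO_{x\times C}(k)$, and then match the resulting Yoneda $2$-extension against the residue pairing implementing Serre duality. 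None of this bookkeeping is carried out, so as it stands the argument reduces the lemma to a claim essentially as hard as the lemma itself.
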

\begin{proof}
	The family $\CO_S$ induces a map from $C$ to $\Ext^1(\CO_C(k+1),\CO_C(k)[1])$, which we can see via the following cohomology argument. We can compute the cohomology of the pullback $\BL j^*j_* \CO_{C\times C}(-k-2,k)$, using the fact that $j$ is the inclusion of a divisor in $X \times X$. $H^0(\BL j^*j_* \CO_{C\times C}(-k-2,k))=\CO_{C\times C}(-k-2,k)$, and $H^{-1}(\BL j^*j_* \CO_{C\times C}(-k-2,k))=\CO_{C\times C}(n-k-2,k)$. This shows that $H^0(\BL j^*\CE)=\CO_{C\times C}(-k-1,k+1)$ and $H^{-1}(\CO_{C\times C}(n-k-2,k))$.
	
	$\CE$ is then a class in $\Ext^1(\CO_{C \times C}(-k-1,k+1),\CO_{C \times C}(n-k-2,k)[1])$. This space is isomorphic to $H^0(\CO_C(n-1))\otimes \Ext^1(\CO_C(k+1),\CO_C(k)[1])$. The map that $C$ induces to $\Ext^1(\CO_C(k+1),\CO_C(k)[1])$ comes from a section of $\CO_C(n-1)$. As long as this section is nonzero, this map has degree $n-1$. We will now show this section is nonzero. 
	
	Let $c$ be a point on $C$. Consider the inclusion $i_c\colon c\times X \rightarrow X \times X$. We will now show that $i_c^* \CE$ is a non-split extension of $\CO_C(-k-1,k+1)$ and $\CO_C(-k-2,k)[1]$. Lemma \ref{lem:local} shows we can do this computation on the local model. 
	
	By Lemma \ref{lem:local2}, we can see that the coordinate ring of $\widehat{X} \times \widehat{X}$ is the completion of the ring $R=\BC[x_1,y_1,w_1,x_2,y_2,w_2]$ with respect to $w_1$ and $w_2$, where $w_1$ and $w_2$ are the equations of the curve in each component, and have degree $(-n,0)$ and $(0,-n)$ respectively. The degree of $x_1$ and $y_1$ will be $(1,0)$, and the degree of $x_2$ and $y_2$ will be $(0,1)$. 
	
	Using the description of $\CE \cong \CO_S$, where $S=(C \times C)\cup_{\Delta_C} \Delta_{\widehat{X}}$, we can write down a free resolution of $\CE$, which we will then pull back via $i_C$.  $S$ is defined in $\widehat{X}$ by the ideal $(w_1(x_1y_2-x_2y_1), w_2(x_1y_2-x_2y_1),x_1^nw_1-x_2^nw_2,\dots , y_1^nw_1-y_2^nw_2)$. The resolution of this ideal is 
	$$  R^{\oplus n-1} \rightarrow R^{\oplus 2n+1} \rightarrow R^{\oplus n+3} \rightarrow R. $$
	
	Pulled back to $c \times X$, and considering degrees, this gives a resolution of $\BL i_c^* \CE$ as follows. 
	\begin{center}
		\begin{tikzpicture}[scale=0.5]
		\matrix (m) [matrix of math nodes,row sep=3em,column sep=2em,minimum width=2em]
		{
			\CO(k)^{\oplus n-1} & \CO(k+n)\oplus (\CO(k)\oplus \CO(k+1))^{\oplus n} & \CO(k) \oplus \CO(k+n) \oplus \CO(k+1)^{\oplus n+1} & \CO(k+1) \\};
		\path[-stealth]
		(m-1-1) edge node [above] {$M_3$}  (m-1-2)
		(m-1-2) edge node [above] {$M_2$} (m-1-3)
		(m-1-3) edge node [above] {$M_1$} (m-1-4);
		\end{tikzpicture}
	\end{center}
	The maps in this sequence are in terms of $x_2,y_2,w_2$. $x_1$ and $y_1$ are fixed. The first map $M_1$ is $$M_1=\left( \begin{array}{ccccc}
	0 & w_2(x_1y_2-x_2y_1) & x_2^nw_2 & \cdots & y_2^nw_2 \\
	\end{array} \right).$$
	The next map $M_2$ is given by
	$$M_2=\left( \begin{array}{cccccc}
	-w_2 & x_1^{n-1} & 0 & \cdots & y_1^{n-1} & 0 \\
	0 & 0 & x_2^{n-1} &  \cdots & 0& y_2^{n-1}  \\
	0 & -y_2 & -y_1 & \cdots & 0 & 0 \\
	0 & x_2 & x_1 & \cdots & 0 & 0 \\
	\vdots & \vdots & \vdots & & \vdots & \vdots \\
	0 & 0 & 0 & \cdots & -y_2 & -y_1 \\
	0 & 0 & 0 & \cdots & x_2 & x_1 \\
	\end{array} \right).$$
	The last map $M_3$ is given by 
	$$M_3=\left( \begin{array}{cccc}
	0 & 0 & \cdots & 0 \\
	-y_1 & 0 & \cdots & 0 \\
	y_2 & 0 & \cdots & 0 \\
	x_1 & -y_1 & \cdots & 0 \\
	-x_2 & y_2 & \cdots & 0 \\
	\vdots & \vdots & & \vdots \\
	0 & 0 & \cdots  & x_1 \\
	0 & 0 &  \cdots & -x_2 \\
	\end{array} \right).$$
	
	Let $F\cx$ be the resolution described above. Recall the notation $\tau_{\leq a}$ given in Section \ref{sec:back}. There is an exact triangle $$\tau_{\leq -1} F\cx \rightarrow F\cx \rightarrow H^0(F\cx).$$ Since $\Hom(\CO_C(k)[1],H^0(F\cx))=0$, and $\Hom(\CO_C(k)[1],H^0(F\cx)[-1])=0$, we know from the long exact Hom sequence applied to the triangle that $\Hom(\CO_C(k)[1],F\cx) \cong \Hom(\CO_C(k)[1],\tau_{\leq -1} F\cx)$.
	
	Similarly, there is an exact triangle $$\tau_{\leq -2}F\cx \rightarrow \tau{\leq -1}F\cx \rightarrow H^{-1}(F\cx)[1].$$ By degree arguments, $\Hom(\CO_C(k)[1],\tau_{\leq -2}F\cx)=0$. It remains to compute $\Hom(\CO_C(k)[1],H^{-1}(F\cx)[1])$.  
	
	We know that $H^{-1}(F\cx) \cong {\rm Ker}(M_1)/{\rm Im}(M_2)$. Looking at the maps $M_2$ and $M_1$ explicitly, we see this quotient is supported on $C$, in degree higher than $k$. Hence, there are no morphisms from $\CO_C(k)[1]$ to this resolution, and so the sequence $0 \rightarrow \CO_C(k)[1] \rightarrow \BL i_C^* \CE \rightarrow \CO_C(k+1) \rightarrow 0$ is non-split 
\end{proof}

\begin{prop}
	\label{prop:bijection} Let $\eta \colon \BP^{n-1}\rightarrow \BP \Ext^1(\CO_C(k+1),\CO_C(k)[1])$ be the isomorphism described in Lemma \ref{lem:exts}. 
	There is a bijection $\gamma \colon (X-C) \cup \BP^{n-1} \rightarrow M_{\tau}([\CO_x])$ defined as follows:
	$$\gamma(y)=
	\begin{cases}
	\CO_y, & {\rm if} \ y \in X\setminus C \\
	\eta(y), & {\rm if} \ y \in \BP^{n-1}. \\
	\end{cases}$$
	
\end{prop}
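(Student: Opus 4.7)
The approach is to verify that $\gamma$ is well-defined, injective, and surjective in turn. The bulk of the work has already been carried out in Lemmas \ref{lem:jh}, \ref{prop:otherx}, \ref{lem:jhtau}, and \ref{lem:exts}; the present proposition is largely an assembly of these pieces.

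First I would verify that $\gamma$ lands in $M_{\tau}([\CO_x])$. For $y \in X \setminus C$, Lemma \ref{prop:otherx} gives that $\CO_y$ is $\sigma_{H,\beta}$-stable, and since the stable locus is open in ${\rm Stab}(X)$ (guaranteed by the support property established in Theorem \ref{thm:supportbr}), $\CO_y$ remains $\tau$-stable for $\tau$ sufficiently close to $\sigma_{H,\beta}$ on the target side of the wall. For $y \in \BP^{n-1}$, Lemma \ref{lem:exts} directly produces $\tau$-stable objects $\eta(y)$, and one checks via the triangle $\CO_C(k)[1] \rightarrow \eta(y) \rightarrow \CO_C(k+1)$ that the numerical class is $[\CO_C(k+1)] - [\CO_C(k)] = [\CO_x]$ for any $x \in C$.

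Next I would establish injectivity. On $X \setminus C$ this is clear, and on $\BP^{n-1}$ it follows from the standard fact that non-proportional classes in $\Ext^1(\CO_C(k+1), \CO_C(k)[1])$ yield non-isomorphic extensions, using simplicity of the two factors in $\CB_{H,k}^{-\Imag(z)}$ (Lemma \ref{lem:jh}). To see that the two pieces of the domain have disjoint images, note that for $y \notin C$ the object $\CO_y$ is $\sigma_{H,\beta}$-stable, while every $\eta(y')$ is strictly $\sigma_{H,\beta}$-semistable with Jordan--H\"older factors $\CO_C(k+1)$ and $\CO_C(k)[1]$; since the multiset of JH factors is an isomorphism invariant of a semistable object, these cannot coincide.

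For surjectivity, let $E\cx$ be any $\tau$-stable object of class $[\CO_x]$. Because $\tau$ lies adjacent to the wall on which $\sigma_{H,\beta}$ sits, $E\cx$ is automatically $\sigma_{H,\beta}$-semistable. Lemma \ref{lem:jhtau} then restricts its Jordan--H\"older factors to either a single $\CO_y$ with $y \notin C$, or copies of $\CO_C(k+1)$ and $\CO_C(k)[1]$; comparing to $[\CO_x] = [\CO_C(k+1)] + [\CO_C(k)[1]]$ in $K(X)$ forces the multiplicities to be one in each case. The first case gives $E\cx \cong \CO_y$ directly. In the second, the phase ordering across the wall, $\phi_\tau(\CO_C(k)[1]) > \phi_\tau(\CO_C(k+1))$, together with $\tau$-stability of $E\cx$, rules out the extension $\CO_C(k+1) \rightarrow E\cx \rightarrow \CO_C(k)[1]$ (whose middle term would be destabilized by the subobject $\CO_C(k+1)$) and forces a non-split triangle $\CO_C(k)[1] \rightarrow E\cx \rightarrow \CO_C(k+1)$, so $E\cx = \eta(y)$ for a unique $y \in \BP\Ext^1(\CO_C(k+1),\CO_C(k)[1])$. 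The most delicate point is precisely the restriction of possible JH filtrations carried out in Lemma \ref{lem:jhtau}; with that lemma in hand, the present proof is essentially a bookkeeping argument.
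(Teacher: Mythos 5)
Your proposal is correct and takes the same route as the paper: the paper's own proof is a one-line citation of Lemma \ref{prop:otherx} and Lemma \ref{lem:exts}, and what you have written is essentially the expansion of that citation (well-definedness, disjointness of the two images via the Jordan--H\"older factors from Lemma \ref{lem:jhtau}, and surjectivity from openness of stability plus the classification of JH factors), all of which the paper establishes in the proofs of those lemmas. One local slip: you state the phase ordering as $\phi_\tau(\CO_C(k)[1]) > \phi_\tau(\CO_C(k+1))$, but this is backwards --- the paper (in the proof of Lemma \ref{lem:exts}) has $\phi_\tau(\CO_C(k+1)) > \phi_\tau(\CO_C(k)[1])$ on the relevant side of the wall, which is exactly what makes $\CO_C(k+1)$ a destabilizing subobject of the triangle $\CO_C(k+1) \rightarrow E\cx \rightarrow \CO_C(k)[1]$ and forces the stable object to sit in the opposite extension $\CO_C(k)[1] \rightarrow E\cx \rightarrow \CO_C(k+1)$. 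Your conclusion is the correct one, but as written the inequality you invoke would imply the opposite, so the sign should be corrected.
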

\begin{proof}
	This follows from Lemma \ref{prop:otherx} and Lemma \ref{lem:exts}. 
\end{proof}

\begin{prop}
	Let $Y=X\sqcup_C \BP^{n-1}$. Then there is a family $\mathcal{U}_{\tau}$ of $\tau$-stable objects over $Y$ such that the induced map $Y \colon M_{\tau}([\CO_x])$ induces the injection in Proposition \ref{prop:bijection} on points.
\end{prop}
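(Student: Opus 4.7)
The plan is to construct $\mathcal{U}_{\tau}$ by gluing two families of $\tau$-stable objects along their common restriction to $C \times X$: the sheaf $\mathcal{E}=\CO_S$ already built on $X \times X$, and a universal extension family on $\BP^{n-1}\times X$. To build the latter, set $V=\Ext^1_X(\CO_C(k+1),\CO_C(k)[1])$ so that $\BP^{n-1}\cong \BP(V)$, and on $\BP(V)\times X$ with projections $\pi_{\BP}$ and $\pi_X$ let the identity $\mathrm{id}_V \in \mathrm{End}(V)$ correspond, via the standard identification
$$\Ext^1\bigl(\pi_X^*\CO_C(k+1),\, \pi_X^*\CO_C(k)[1]\otimes \pi_{\BP}^*\CO_{\BP(V)}(1)\bigr)\cong \mathrm{End}(V),$$
to a universal extension
$$\pi_X^*\CO_C(k)[1]\otimes \pi_{\BP}^*\CO_{\BP(V)}(1) \to \mathcal{V} \to \pi_X^*\CO_C(k+1).$$
By construction, for every $y\in \BP(V)$ the derived fibre $\BL(\iota_y\times \mathrm{id})^*\mathcal{V}$ is the nontrivial extension $\eta(y)$, so $\mathcal{V}$ furnishes a flat family of $\tau$-stable objects of class $[\CO_x]$ parametrised by $\BP^{n-1}$.

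Next, the two families must be matched along $C$. By Lemma \ref{lem:degree} the family $\mathcal{E}$ induces the map $i\colon C \to \BP(V)$ through a nowhere-vanishing section of $\CO_C(n-1)$, and hence $i$ is an embedding of $C$ as a rational normal curve of degree $n-1$. Universality of $\mathcal{V}$ then yields an isomorphism $(i\times \mathrm{id})^*\mathcal{V}\cong \mathcal{E}|_{C\times X}$ in $\CDb(C\times X)$, possibly after twisting $\mathcal{V}$ by a line bundle pulled back from $\BP(V)$ so that the two extensions agree on the nose rather than only projectively. Ferrand's pushout theorem then produces $Y=X\sqcup_C \BP^{n-1}$ as a projective scheme with $X$ and $\BP^{n-1}$ as its two components meeting along $i(C)$, and the complexes $\mathcal{E}$ on $X\times X$ and $\mathcal{V}$ on $\BP(V)\times X$, compatibly identified along $C\times X$, glue by descent to a single object $\mathcal{U}_{\tau}\in \CDb(Y\times X)$.

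Finally, the classifying map $Y\to M_{\tau}([\CO_x])$ associated to $\mathcal{U}_{\tau}$ sends $y$ to $\BL(\iota_y\times \mathrm{id})^*\mathcal{U}_{\tau}$, which by construction is $\CO_y$ for $y\in X\setminus C$ and the extension $\eta(y)$ for $y\in \BP^{n-1}$; this reproduces the bijection $\gamma$ of Proposition \ref{prop:bijection}. The principal obstacle is the matching step: pinning down the line bundle twist relating $(i\times \mathrm{id})^*\mathcal{V}$ to $\mathcal{E}|_{C\times X}$ requires tracing through the explicit resolution computed in the proof of Lemma \ref{lem:degree} to identify the tautological class on $\BP(V)$ that $\mathcal{E}$ pulls back to $C$, and then carrying out the derived gluing rigorously on $Y$, which carries a codimension-one singularity along $i(C)$ when $n\geq 3$ and so requires some care beyond formal Zariski descent.
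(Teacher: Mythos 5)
Your setup matches the paper's: the family $\CO_S$ on $X\times X$, a universal extension family on $\BP^{n-1}\times X$ (the paper gets this from \cite[Proposition 4.2.2]{LP} rather than the $\CO_{\BP(V)}(1)$-twisted formulation, but these are the same construction), and the plan to match the two along $C\times X$ and glue. The problem is that the actual gluing --- which is the entire content of the paper's proof --- is not carried out. You write that the two complexes ``glue by descent to a single object $\mathcal{U}_\tau \in \CDb(Y\times X)$,'' but objects of a derived category do not satisfy descent along a closed cover; there is no general mechanism producing an object of $\CDb(Y\times X)$ from compatible objects on the two components of $Y\times X$. You concede as much in your final sentence, where you flag the derived gluing over the codimension-one singularity along $i(C)$ as requiring ``some care beyond formal Zariski descent.'' That care is precisely the missing proof.

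What the paper does instead is define the candidate glued object directly as the cone: $P\cx \rightarrow (j_1)_*\CO_S \oplus (j_2)_*E\cx \rightarrow i_*L\cx$, where $L\cx = i_1^*\CO_S \cong i_2^*E\cx$, and then verify that $P\cx$ has the correct derived fibres. Away from $C$ this is immediate, but along $C$ one must show $P\cx \otimes^{\mathbb{L}} i_*\CO_C \cong i_*L\cx$, and this fails to be formal exactly because $Y$ is singular along $C$ for $n\geq 3$: the derived tensor products $(j_1)_*\CO_S \otimes^{\mathbb{L}} i_*\CO_C$ and $(j_2)_*E\cx \otimes^{\mathbb{L}} i_*\CO_C$ acquire infinitely many shifted copies of $i_*L\cx$ coming from the infinite free resolution of $\CO_C$ over the local ring $k[[x,y,z_1,\dots,z_{n-2}]]/(yz_1,\dots,yz_{n-2})$, and one must check that these cancel in the triangle against $i_*L\cx \otimes^{\mathbb{L}} i_*\CO_C$. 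The paper's count of the multiplicities $a_i$, $b_i$, $c_i = a_i - b_i$ coming from the $y$-maps versus the Koszul differentials in $z_1,\dots,z_{n-2}$ is the substance of the argument, and nothing in your proposal substitutes for it. (Your secondary worry about the line-bundle twist needed to match $(i\times\mathrm{id})^*\mathcal{V}$ with $\CE|_{C\times X}$ is legitimate but minor by comparison; the identification of the restricted class in $H^0(\CO_C(n-1))\otimes\Ext^1(\CO_C(k+1),\CO_C(k)[1])$ is already done in Lemma \ref{lem:degree}.)
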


\begin{proof}
	We have constructed a family on $X$, the object $\CO_S$ in $\CDb(X \times X)$. We also have a family on $\BP^{n-1}$ given by the universal extension of $\CO_C(k+1)$ and $\CO_C(k)[1]$ \cite[p.118]{LP}. Consider the projections $p_1 \colon \BP^{n-1} \times X \rightarrow \BP^{n-1}$ and $p_2 \colon \BP^{n-1} \times X \rightarrow X$. By \cite[Proposition 4.2.2]{LP}, the object $\mathcal{E}{\it xt}^1(p_2^*\CO_C(k+1), p_1^*\CO_C(k)[1])$ in $\CDb(\BP^{n-1} \times X)$ is isomorphic to $H^0(\CO_{\BP^{n-1}} \otimes \Ext^1(\CO_C(k+1),\CO_C(k)[1]))$. If we consider the element $E\cx \in  \Ext^1(p_2^*\CO_C(k+1), p_1^*\CO_C(k)[1])$ in $\CDb(\BP^{n-1} \times X)$ corresponding to the identity map $\BP^{n-1} \rightarrow \BP^{n-1}$, then $E\cx \in \CDb(\BP^{n-1} \times X)$ is a universal family on $\BP^{n-1}$ parametrizing extensions $\Ext^1(\CO_C(k+1),\CO_C(k)[1])$.

	We now claim that these two objects can be glued along $C$ to construct a family $\mathcal{U}_{\tau}$ over $Y=(X\setminus C) \sqcup \mathbb{P}^{n-1}$ inducing the injection in Proposition \ref{prop:bijection}. 
	
	Consider the following diagram of inclusions:
	\begin{center}
		\begin{tikzpicture}
		\matrix (m) [matrix of math nodes,row sep=3em,column sep=4em,minimum width=2em]
		{
			C \times X &  X \times X \\
			\BP^{n-1} \times X & Y \times X. \\};
		\path[-stealth]
		(m-1-1) edge node [left] {$i_2$} (m-2-1)
		(m-1-1) edge node [above] {$i_1$} (m-1-2)
		(m-2-1) edge node [below] {$j_2$} (m-2-2)
		(m-1-2) edge node [right] {$j_1$} (m-2-2)
		(m-1-1) edge node [above] {$i$} (m-2-2);
		\end{tikzpicture}
	\end{center}
	By construction, $i_1^*\CO_S \cong i_2^*E\cx$. Let $L\cx=i_1^*\CO_S$. Further, if we consider the isomorphisms, $i_1^*\CO_S \rightarrow L\cx$ and $i_2^*E\cx \rightarrow L\cx$, then via adjunction and push-forward we get morphisms $(j_1)_*\CO_S \rightarrow i_*L\cx$ and $(j_2)_*E\cx \rightarrow i_*L\cx$. Define $P\cx$ to be the object fitting into the exact triangle
	$$ P\cx \rightarrow (j_1)_*\CO_S \oplus (i_2)_*E\cx \rightarrow i_*L\cx.$$ We will now show that $P\cx$ is the desired family $\mathcal{U}_{\tau}$. 
	
	First, suppose $x$ is a point in $X \setminus C$. Then restricting the triangle above to $\{x\} \times X$, $E\cx$ and $L\cx$ become $0$, so $P\cx\vert_{\{x\} \times X} \cong \CO_S\vert_{\{x\} \times X}$. Similarly, if we choose a point $y \in \mathbb{P}^{n-1}\setminus C$, we find $P\cx\vert_{\{y\} \times X} \cong E\cx \vert_{\{y\}\times \BP^{n-1}}$. What remains is to show that $P\cx\vert_{C\times X} \cong L\cx$. In fact, we will show that this is true in a formal neighbourhood of a point $x \in C$. That is, we will look at the exact triangle
	$$P\cx \otimes^{\mathbb{L}} i_*\CO_C \rightarrow ((j_1)_*\CO_S \oplus (j_2)_*E\cx)\otimes^{\mathbb{L}} i_*\CO_C \rightarrow i_*L\cx\otimes^{\mathbb{L}} i_*\CO_C$$ and show that in a formal neighbourhood of a point $x \in C$, $P\cx \otimes^{\mathbb{L}}i_*\CO_C \cong i_*L\cx$. This will show via the projection formula that near $x$, $i_*i^*P\cx \cong i_*L\cx$ so that $i^*P\cx \cong L\cx$.
	
	We will now describe a formal neighbourhood of a point $x \in C$. Along $C$, we can look at an affine patch $\BA^{n-1}$ of $\BP^{n-1}$ and $\BA^2$ of $X$, glued along the affine patch $\BA^1$ of $C$. The coordinate ring of this space is $R=k[x,y,z_1, \dots, z_{n-2}]/(yz_1, \dots, yz_{n-1})$, where $C={\rm Spec}(k[x])$, $\BA^2={\rm Spec}(k[x,y])$ and $\BA^{n-1}={\rm Spec}(k[x,z_1, \dots, z_{n-2}])$. The formal neighbourhood of $x$ in $Y$ is the completion $k[[x,y,z_1,\dots, z_{n-2}]]/(yz_1,\dots, yz_{n-2})$ of this ring $R$. Since the inclusion of this neighbourhood in $Y$ is flat, we may restrict any complexes to this neighbourhood Here on we will use $i_1$, $i_2$, $j_1$, $j_2$, and $i$ to describe these maps after base change. 
	
	The resolution of $C$ in the ring $R$ is the resolution of the ideal $(y,z_1, \dots, z_{n-1})$. This resolution is given by the complex $R\cx$ below.
	\begin{center}
		\begin{tikzpicture}[scale=0.5]
		\matrix (m) [matrix of math nodes,row sep=3em,column sep=2em,minimum width=2em]
		{
			\cdots & R^{a_2} & R^{a_1} & R^{a_0}. \\};
		\path[-stealth]
		(m-1-1) edge node [above] {$d_2$}  (m-1-2)
		(m-1-2) edge node [above] {$d_1$} (m-1-3)
		(m-1-3) edge node [above] {$d_0$} (m-1-4);
		\end{tikzpicture}
	\end{center} 
	We can see that $a_0=1$ and $a_1=n-1$, as the first differential, $d_0$ is given by multiplication by the equations $y, z_1, \dots, z_{n-1}$ describing $C$. The next differential, $d_1$, describes the relations between these. The relations are given by the $2(n-2)$ products of $z_i$ with $y$ (which is $0$ in this ring) and the first step in the Koszul complex for $z_1, \dots, z_{n-2}$, call it $K\cx$. This gives $a_3=2(n-2)+\binom{n-2}{2}=\frac{(n-2)(n+1)}{2}$ factors of $R$ at the third step in the resolution. For example, when $n=4$, the degree $-2$ to $0$ terms of $R\cx$ are
	\begin{center}
		\begin{tikzpicture}[scale=1.5]
		\matrix (m) [matrix of math nodes,row sep=3em,column sep=10em,minimum width=2em]
		{
			\cdots & R^{5} & R^{3} & R. \\};
		\path[-stealth]
		(m-1-1) edge node [above] {$d_2$}  (m-1-2)
		(m-1-2) edge node [above] {$\left( \begin{array}{ccccc}
			0 & z_2 & 0 & z_1 & 0 \\
			-z_2 & 0 & 0 & 0 & y \\
			z_1 &0 & y & 0 & 0\\
			\end{array} \right)$} (m-1-3)
		(m-1-3) edge node [above] {$\left( \begin{array}{ccc}
			y & z_1 & z_2 \\
			\end{array} \right)$} (m-1-4);
		\end{tikzpicture}
	\end{center}

	Let $m_i$ be the rank of $R$ in the $i$th term of the resolution. The $i$th differential will consist of linear terms $y, z_1, \dots, z_{n-2}$ which multiply with the $i-1$th differential to give relations of the form $yz_i$ or relations in the Koszul complex of $z_1, \dots, z_{n-2}$. For every summand $R$ of $R^{a_i}$, the total number of the maps coming into $R$ from $d_i$ and the maps coming out of $R$ from $d_{i-1}$ will be $n-1$, with each linear map $y,z_1, \dots, z_{n-2}$ appearing exactly once. 
	
	We now compute the tensor product $(j_1)_*\CO_S \otimes^{\mathbb{L}} i_*\CO_C$, obtained by tensoring $R\cx$ with $(j_1)_*\CO_S$. The maps $z_i$ are $0$ on $\CO_S$, since $\CO_S$ is supported on $X \times X$ and $z_i$ are coordinated on $\BP^{n-1}$. Furthermore, any term of the form $\bigl((j_1)_*\CO_S \xrightarrow{y} (j_1)_*\CO_S\bigr)$ is isomorphic to $i_*L\cx$. This follows from the fact that $(i_1)_*L\cx \cong \CO_S \otimes^{\mathbb{L}} (i_1)_*\CO_C \cong \bigl((j_1)_*\CO_S \xrightarrow{y} (j_1)_*\CO_S\bigr)$ via the map $y$. Therefore $i_*L\cx \cong (j_1)_*\CO_S \rightarrow (j_1)_*\CO_S$ via the map $y$.
	
	Every copy of $(j_1)_*\CO_S$ in the complex $(j_1)_*\CO_S \otimes^{\mathbb{L}} i_*\CO_C$ given by		\begin{center}
		\begin{tikzpicture}[scale=0.5]
		\matrix (m) [matrix of math nodes,row sep=3em,column sep=2em,minimum width=2em]
		{
			\cdots & (j_1)_*\CO_S^{a_2} & (j_1)_*\CO_S^{a_1} & (j_1)_*\CO_S^{a_0}. \\};
		\path[-stealth]
		(m-1-1) edge node [above] {$d_2$}  (m-1-2)
		(m-1-2) edge node [above] {$d_1$} (m-1-3)
		(m-1-3) edge node [above] {$d_0$} (m-1-4);
		\end{tikzpicture}
	\end{center} 
	will occur either at the end of an incoming map $y$ or at the beginning of an outgoing map $y$. Therefore, this complex is isomorphic to $$(j_1)_*\CO_S \otimes^{\mathbb{L}} i_*\CO_C \cong i_*L\cx \oplus i_*L\cx[1]^{\oplus b_1} \oplus i_*L\cx[2]^{\oplus b_2} \oplus \cdots$$ where $b_i$ is the number of incoming $y$ maps in the $-i$th degree term of $R\cx$. 
	
	Now, we compute the the tensor product of $(j_2)_*E\cx \otimes^{\mathbb{L}} i_*\CO_C$.  The maps $y$ are $0$ on $(j_2)_*E\cx$, since $E\cx$ is supported on $\BP^{n-1}\times X$ and $y$ is a coordinate on $X$. Further, the Koszul complex $K\cx$ of $z_1, \dots, z_{n-2}$ tensored with $(j_2)_*E\cx$ is isomorphic to $i_*L\cx$. This is because $(i_2)_*L\cx \cong E\cx \otimes i_*\CO_C \cong E\cx \otimes K\cx$. Hence $i_*L\cx \cong (j_2)_*E\cx \otimes K\cx$. 
	
	Consider the complex $(j_2)_*E\cx \otimes^{\mathbb{L}} i_*\CO_C$ given by		\begin{center}
		\begin{tikzpicture}[scale=0.5]
		\matrix (m) [matrix of math nodes,row sep=3em,column sep=2em,minimum width=2em]
		{
			\cdots & (j_2)_*(E\cx)^{\oplus a_2} & (j_2)_*(E\cx)^{\oplus a_1} & (j_2)_*E\cx. \\};
		\path[-stealth]
		(m-1-1) edge node [above] {$d_2$}  (m-1-2)
		(m-1-2) edge node [above] {$d_1$} (m-1-3)
		(m-1-3) edge node [above] {$d_0$} (m-1-4);
		\end{tikzpicture}
	\end{center} For each copy of $(j_2)_*E\cx$ which occurs in degree $-i$ in this complex, and occurs at the end of a complex of the form $(j_2)_*E\cx \otimes K\cx$, we get a direct summand of $i_*L\cx[i]$ in $(j_2)_*E\cx \otimes^{\mathbb{L}}i_*\CO_C$. Since the maps $y$ are now $0$, all nonzero maps in this complex occur as part of some shift of $(j_2)_*E\cx \otimes K\cx$, hence $(j_2)_*E\cx \otimes^{\mathbb{L}}i_*\CO_C$ is a direct sum of shifts of $i_*L\cx$. We must now count these terms to determine the complex $(j_2)_*E\cx \otimes^{\mathbb{L}}i_*\CO_C$.
	
	If we now consider the complex $R\cx$, we note that if a summand $R$ of $R^{a_i}$ has an outgoing map $y$, then it must have $n-2$ incoming maps $z_1, \dots, z_{n-2}$, since $yz_i=0$ is a relation in $R$. We have seen that the differentials in $R\cx$ all come from relations $yz_i$ or from the differentials in $K\cx$, A summand $R$ with $n-2$ incoming maps, one for each $z_i$, must then occur at the end of a Koszul complex $K\cx$. Therefore, if we let $c_i$ be the degree of $i_*L\cx[i]$ in $(j_2)_*E\cx \otimes^{\mathbb{L}}i_*\CO_C$, we see $c_i=a_i-b_i$ for $i>0$. 
	
	Now consider $i_*L\cx \otimes i_*\CO_C$. The maps $y, z_1, \dots, z_{n-2}$ are all $0$ on $C$. Hence this complex will be a direct sum of terms $i_*L\cx$, of the form $i_*L\cx \oplus i_*L\cx[1]^{\oplus n-1} \oplus i_*L\cx[2]^{\oplus a_1} \oplus \cdots$. The exact triangle $$P\cx \otimes^{\mathbb{L}} i_*\CO_C \rightarrow ((j_1)_*\CO_S \oplus (j_2)_*E\cx)\otimes^{\mathbb{L}} i_*\CO_C \rightarrow i_*L\cx\otimes^{\mathbb{L}}i_*\CO_C$$ is now locally given by
	$$P\cx \otimes^{\mathbb{L}}i_*\CO_C \rightarrow (i_*L\cx \oplus i_*L\cx[1]^{\oplus b_1} \oplus \cdots)\oplus(i_*L\cx \oplus i_*L\cx[1]^{\oplus a_1-b_1} \oplus \cdots)\rightarrow i_*L\cx \oplus i_*L\cx[1]^{\oplus a_1}\oplus \cdots. $$ We can then see that $P\cx \otimes^{\mathbb{L}} i_*\CO_C \cong i_*L\cx$, which completes our proof that $P\cx$ is a family over $Y$ which is obtained by gluing $E\cx$ and $\CO_S$ along $C$. Since $E\cx$ and $\CO_S$ induce the isomorphism in Proposition \ref{prop:bijection} over $\BP^{n-1}$ and $X$ respectively, and agree on $C$, the glued object $P\cx$ we have constructed will induce the map $Y \rightarrow M_{\tau}([\CO_x])$ in Proposition \ref{prop:bijection}.
\end{proof}

We now will describe the tangent space $\Ext^1(E,E)$ for $E \in M_{\tau}([\CO_x])$. In particular, we will show that $\gamma$ induces an isomorphism of tangent spaces between $M_{\tau}([\CO_x])$ and $X \sqcup_C \BP^{n-1}$, where $C$ is embedded as a rational normal curve in $\BP^{n-1}$. In the course of this argument we will specifically describe the image of $C$ in $\BP \Ext^1(\CO_C(k+1),\CO_C(k)[1])$ as the locus of extensions where the tangent space jumps in dimension. 

\begin{lem}
	\label{lem:tan}
	The map $\gamma \colon X \sqcup_C \BP^{n-1} \rightarrow M^{\tau}([\CO_x])$ induces a isomorphism of tangent spaces. 
\end{lem}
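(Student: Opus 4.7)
The plan is to verify, case by case on the position of $y \in X \sqcup_C \BP^{n-1}$, that the differential of $\gamma$ gives an isomorphism $T_y Y \xrightarrow{\sim} \Ext^1(\gamma(y),\gamma(y))$, identifying the latter with the tangent space of $M_\tau([\CO_x])$ at $\gamma(y)$. At each point I realize this differential as the Kodaira--Spencer map of the universal family $\mathcal{U}_\tau$ constructed in the previous proposition.

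For $y \in X \setminus C$ the object $\gamma(y) \cong \CO_y$ and $\mathcal{U}_\tau$ restricts to the structure sheaf of the diagonal near $(y,y) \in X \times X$, so the classical identification $T_y X \cong \Ext^1(\CO_y,\CO_y)$ handles this case. For $y \in \BP^{n-1}$, let $E = \gamma(y)$ denote the nontrivial extension of $\CO_C(k+1)$ by $\CO_C(k)[1]$. I would first compute $\Ext^i(E,\CO_C(k)[1])$ and $\Ext^i(E,\CO_C(k+1))$ by applying $\Hom(-,\CO_C(k)[1])$ and $\Hom(-,\CO_C(k+1))$ to the defining triangle, using the $\Ext$ groups between the sheaves $\CO_C(j)$ computed in the proof of Lemma~\ref{lem:dechoms}. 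Then the long exact sequence from $\Hom(E,-)$ applied to the same triangle determines $\Ext^1(E,E)$, with connecting maps given by composition with the extension class $\xi$. Combined with $\chi(E,E) = \chi(\CO_x,\CO_x) = 0$, Serre duality $\Ext^2(E,E) \cong \Hom(E,E \otimes \omega_X)^*$, and the vanishing $\Ext^3(E,E) = 0$ (for degree reasons on a surface), this should yield $\dim \Ext^1(E,E) = n-1$ when $y$ does not lie on the image $i(C) \subset \BP^{n-1}$, matching $\dim T_y \BP^{n-1}$, and $\dim \Ext^1(E,E) = n$ when $y \in i(C)$.

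On the $Y$-side, at a gluing point $y \in i(C)$ the pushout tangent space $T_x X \oplus_{T_x C} T_x \BP^{n-1}$ has dimension $2 + (n-1) - 1 = n$, matching the Ext computation. To upgrade this dimension coincidence into an isomorphism I would exhibit the natural subspaces of $\Ext^1(E,E)$ coming from the two pieces of the gluing: the image from $\CO_S$ gives the \emph{$X$-deformations} (moving the support of $E$), and the image from the universal extension gives the \emph{$\BP^{n-1}$-deformations} (varying the extension class). Their intersection at $y \in i(C)$ is the one-dimensional direction along $C$, identified inside both factors via Lemma~\ref{lem:degree}. This produces an injection $T_y Y \hookrightarrow \Ext^1(E,E)$ compatible with the pushout presentation, and the dimensions then force it to be an isomorphism.

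The main obstacle I expect is controlling the rank of the connecting maps built from $\xi$: these should drop rank exactly on $i(C) \subset \BP^{n-1}$, and this rank drop is precisely what produces the extra tangent direction at gluing points. Once the dimensions are settled, the identification of the $X$- and $\BP^{n-1}$-tangent subspaces inside $\Ext^1(E,E)$ follows from the explicit gluing description of $\mathcal{U}_\tau$ on $Y \times X$ along $C \times X$ constructed in the previous proposition.
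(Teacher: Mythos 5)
Your overall plan --- reduce to the long exact sequences obtained from the destabilizing triangle $\CO_C(k)[1] \rightarrow E \rightarrow \CO_C(k+1)$, and locate the jump in $\dim\Ext^1(E,E)$ at the locus where the composition-with-$\xi$ maps drop rank --- is the same skeleton as the paper's proof. But the step you defer as ``the main obstacle I expect'' is in fact the entire content of the lemma, and you do not supply an argument for it. The paper proves it as follows: surjectivity of $\Ext^1(\CO_C(k+1),\CO_C(k)[1]) \rightarrow \Ext^1(E,E)$ is equivalent to injectivity of $\Ext^1(E,\CO_C(k+1)) \rightarrow \Ext^2(E,\CO_C(k)[1])$, which reduces to analyzing the two maps $\beta(f)=\Delta\circ f$ and $\lambda(f)=f\circ\Delta$ on $\Hom(\CO_C(k),\CO_C(k+1))$, where $\Delta\in\Hom(\CO_C(k+1),\CO_C(k)[2])$ is the class of $E$. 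Serre duality converts both conditions $f\in\ker\beta$ and $f\in\ker\lambda$ into the single condition that the functional $\phi(\Delta)$ on $\Hom(\CO_C(k)[2],\CO_C(k+n-1)[2])\cong H^0(\CO_C(n-1))$ vanish on the image of multiplication by $f$. For generic $\Delta$ no nonzero $f$ satisfies this, giving $\dim\Ext^1(E,E)=n-1$; the exceptional $\Delta$ are exactly the $\Delta_c$ dual to the multiplication maps $\CO_C(k)\rightarrow\CO_C(k+n-1)$ with cokernel at a point $c\in C$, which is precisely how the rational normal curve appears. Without this computation you have neither the dimension statement nor the identification of the jump locus with $i(C)$.

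Moreover, the fallback dimension count you propose does not work: the vanishing $\Ext^3(E,E)=0$ is false for $n\geq 3$. Since $E$ has cohomology sheaves $\CO_C(k)$ in degree $-1$ and $\CO_C(k+1)$ in degree $0$, the group $\Ext^3(E,E)$ receives a contribution from $\Ext^2(\CO_C(k),\CO_C(k+1))\cong\BC^{n-2}$ (by the computation in the proof of Lemma \ref{lem:dechoms}), and indeed $\Ext^3(E,E)\cong\BC^{n-2}$ is what makes $\chi(E,E)=1-n+(2n-3)-(n-2)=0$ consistent with the Ext dimensions recorded in Lemma \ref{lem:erc}. So $\chi(E,E)=0$ together with $\hom(E,E)=1$ and Serre duality does not determine $\dim\Ext^1(E,E)$; you must compute the rank of the connecting maps directly, which returns you to the missing Serre-duality argument above. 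The remaining parts of your outline (the case $y\in X\setminus C$, and matching the two tangent subspaces coming from $\CO_S$ and from the universal extension against the pushout presentation of $T_yY$) are fine and consistent with the paper.
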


\begin{proof}
	If $E \in  M^{\tau}([\CO_x])$ is the class of a stable object $\CO_x$ for some $x \in X\setminus C$ then $\Ext^1(E,E) \cong T_xX$. We will now consider $E$ a stable object of class $[\CO_x]$ for some $x \in C$. We will show that $\Ext^1(E,E) \cong \Ext^1(\CO_C(k+1),\CO_C(k)[1]) \cong \BC^{n-1}$ except for $E$ lying on a copy of $C$ in $\BP^{n-1}$, where $\Ext^1(E,E) \cong \BC^n$. 
	
	There is a morphism $\Ext^1(\CO_C(k+1),\CO_C(k)[1]) \rightarrow \Ext^1(E,E)$ given by composing $\Ext^1(\CO_C(k+1),\CO_C(k)[1]) \rightarrow \Ext^1(E,\CO_C(k)[1]) \rightarrow \Ext^1(E,E)$. We would like to show this morphism is surjective. Applying Hom to the triangle $\CO_C(k)[1] \rightarrow E \rightarrow \CO_C(k+1)$, we see this is equivalent to showing that $\Ext^1(E, \CO_C(k+1)) \rightarrow \Ext^2(E,\CO_C(k)[1])$ is injective. 
	
	Consider the commutative diagram of exact sequences: 
	
	\begin{center}
		\begin{tikzpicture}
		\matrix (m) [matrix of math nodes,row sep=3em,column sep=4em,minimum width=2em]
		{
			\Ext^2(\CO_C(k+1),\CO_C(k+1)) & 0 \\
			\Hom(\CO_C(k),\CO_C(k+1)) & \Ext^2(\CO_C(k),\CO_C(k)) \\
			\Ext^1(E,\CO_C(k+1)) & \Ext^2(E,\CO_C(k)[1]) \\
			0& 0 \\};
		\path[-stealth]
		(m-3-1) edge node [above] {$\alpha$}  (m-3-2) edge (m-2-1)
		(m-2-1) edge node [left] {$\lambda$} (m-1-1) edge node [above] {$\beta$} (m-2-2)
		(m-3-2) edge (m-2-2)
		(m-2-2) edge (m-1-2)
		(m-1-1) edge  (m-1-2)
		(m-4-1) edge (m-3-1) edge (m-4-2)
		(m-4-2) edge (m-3-2);
		\end{tikzpicture}
	\end{center}
	In order to show that the map $f$ is injective, we show that ${\rm ker}(\beta)$ and ${\rm ker}(\lambda)$ intersect non-trivially in $\Hom(\CO_C(k),\CO_C(k+1))$. 
	
	Let $\Delta \in \Hom(\CO_C(k+1),\CO_C(k)[2])$ be the class of $E$. For $f \in \Hom(\CO_C(k),\CO_C(k+1))$, $\beta(f)= \Delta \circ f$. By Serre duality, we have isomorphisms $\phi$ so that the following square is commutative.
	\begin{center}
		\begin{tikzpicture}
		\matrix (m) [matrix of math nodes,row sep=3em,column sep=4em,minimum width=2em]
		{
			\Hom(\CO_C(k+1),\CO_C(k)[2]) & \Hom(\CO_C(k),\CO_C(k)[2]) \\
			\Hom(\CO_C(k)[2],\CO_C(k+n-1)[2])^* & \Hom(\CO_C(k)[2],\CO_C(k+n-2)[2])^* \\};
		\path[-stealth]
		(m-1-1) edge node [above] {$f$} (m-1-2) 
		(m-1-2) edge node [right] {$\phi$} (m-2-2)
		(m-1-1) edge node [left] {$\phi$} (m-2-1) 
		(m-2-1) edge node [above] {$F$} (m-2-2);
		\end{tikzpicture}
	\end{center}
	The map $f$ composes a class $\Delta$ with $f$. For any $g \in \Hom(\CO_C(k)[2],\CO_C(k+n-2)[2])$ and functional $\xi \in \Hom(\CO_C(k)[2],\CO_C(k+n-1)[2])^*$, $F(\xi)(g)=\xi(f[2] \circ g)$, where $f[2]$ is now viewed as lying in $\Hom(\CO_C(k+n-2)[2],\CO_C(k+n-1)[2])$. The commutativity of this square shows that $\phi(\Delta \circ f)(g)=\phi(\Delta)(f[2]\circ g)$.
	
	Similarly, $\lambda(f)=f \circ \Delta$. Using Serre duality, we see that for $h \in \Hom(\CO_C(k+1),\CO_C(k+n-1))$, $\phi(\lambda(f))(h)=\phi(f \circ \Delta)(h)=\phi(\Delta)(h \circ f[2])$. We now see that ${\rm ker}(\lambda)={\rm ker}(\beta)$, and is given by the condition that $f$ must be such that $\phi(\Delta)$ vanishes on the image of the map $\Hom(\CO_C(k)[2],\CO_C(k+n-2)[2]) \rightarrow \Hom(\CO_C(k)[2],\CO_C(k+n-1)[2])$ given by multiplication by $f[2]$. For general $\Delta$, no such $f$ will exist, and both $\beta$ and $\lambda$ will be  injective. In this case, there is a surjection $ \Ext^1(\CO_C(k+1), \CO_C(k)[1]) \twoheadrightarrow \Ext^1(E,E)$ with $1$-dimensional kernel, and $\Ext^1(E,E) \cong \BP^{n-1}$.
	
	For each point $c$ on a rational curve $C$, there is a $\Delta_c \in \Hom(\CO_C(k+1),\CO_C(k)[2])$ which is dual to $\delta_c \in \Hom(\CO_C(k)[2],\CO_C(k+n-1)[2])$, the shift by 2 of the map $\CO_C(k) \rightarrow \CO_C(k+n-1)$ given with cokernel supported at $c$. For this $\Delta_c$, the kernel of $\beta$ and the kernel of $\lambda$ is one-dimensional, and $\Ext^1(E,E) \cong \BP^n$. 
\end{proof}

We have shown in Proposition \ref{prop:bijection} that $\gamma$ is a bijection on points, and in Lemma \ref{lem:tan} that $\gamma$ induces an isomorphism of tangent spaces. Were $X \sqcup_C \BP^{n-1}$ smooth, then following \cite[Corollary 14.10]{Harris} this would be enough to show that $\gamma$ is an isomorphism. Of course, $X \sqcup_C \BP^{n-1}$ is not smooth when $n>2$. It is in fact reducible, singular along the curve $C$ where the two varieties $X$ and $\BP^{n-1}$ meet. Hence \cite[Corollary 14.10]{Harris} is enough to show only that $\gamma$ is an isomorphism away from $C$. 

However, the proof of \cite[Corollary 14.10]{Harris} does not require smoothness. In fact, in our case, the only concern we might have is that without smoothness, the map $\gamma^*$ might not be injective, which is required in Harris' proof. The following lemma will show that in fact we need only to show that $M_{\tau}([\CO_x])$ is reduced in order to show that $\gamma^*$ is injective and apply $\cite[Corollary 14.10]{Harris}$. The work of showing that $M_{\tau}([\CO_x])$ is reduced is the content of Section \ref{sec:reducedness}.

\begin{lem}
\label{lem:injred}
Let $\pi \colon X \rightarrow Y$ be a surjective morphism of affine varieties, and let $Y$ be reduced. Then the induced ring homomorphism is injective.
\end{lem}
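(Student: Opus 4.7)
The plan is to chase an element through the rings of regular functions and use the Nullstellensatz together with reducedness. Let $\pi^* \colon \mathcal{O}(Y) \to \mathcal{O}(X)$ denote the induced ring homomorphism, and suppose $f \in \ker(\pi^*)$. Then $\pi^*(f) = f \circ \pi$ is the zero function on $X$, so $f(\pi(x)) = 0$ for every $x \in X$.

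Next I would invoke surjectivity of $\pi$ to conclude that $f$ vanishes pointwise on all of $Y$: for every $y \in Y$ there is an $x \in X$ with $\pi(x) = y$, and then $f(y) = f(\pi(x)) = 0$. Equivalently, $f$ lies in every maximal ideal of $\mathcal{O}(Y)$, i.e., $f$ belongs to the Jacobson radical of $\mathcal{O}(Y)$. Since $\mathcal{O}(Y)$ is a finitely generated algebra over an algebraically closed field, the Hilbert Nullstellensatz identifies the Jacobson radical with the nilradical.

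Finally, reducedness of $Y$ means that $\mathcal{O}(Y)$ has trivial nilradical, and hence $f = 0$ in $\mathcal{O}(Y)$. This shows $\ker(\pi^*) = 0$, so $\pi^*$ is injective. There is no real obstacle here — the only point to be careful about is that pointwise vanishing alone is not enough to conclude $f=0$ in general (it only gives $f$ nilpotent), which is precisely where the reducedness hypothesis on $Y$ is used.
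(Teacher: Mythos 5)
Your proof is correct and follows essentially the same route as the paper: show that an element of the kernel lies in the nilradical of $\mathcal{O}(Y)$ and then invoke reducedness. The only cosmetic difference is that you pass through closed points, maximal ideals, and the Nullstellensatz (Jacobson radical equals nilradical for finitely generated algebras over $\mathbb{C}$), whereas the paper works directly with surjectivity on prime spectra and so never needs the Nullstellensatz; both are valid in this setting.
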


\begin{proof}
Say $X= {\rm Spec}(B)$ and $Y={\rm Spec}(A)$. Suppose $a$ is in the kernel of $\pi^*$. Then $\pi^*(a)=0$, which lies in every prime ideal of $B$. Since the map $\pi$ is surjective, this implies that $a$ lies in every prime ideal of $A$. Therefore $a$ is in the nilradical of $A$. Since $A$ is reduced, $a=0$. 
\end{proof}

\section{On the reducedness of the moduli space $M_{\tau}([\mathcal{O}_{x}])$}
\label{sec:reducedness}
Since we are going to study local properties of the moduli space $M_{\tau}([\mathcal{O}_{x}])$ of complexes, we start with some definitions and properties from the deformation theory. Let $Y$ be a smooth projective variety and $E\in\mathrm{D}^{\mathrm{b}}(Y)$ be a complex in its bounded derived category. Let $\mathrm{Art}$ be the category of Artin local ring over $\mathbb{C}$ and $A\in\mathrm{Art}$.
\begin{defn}
A deformation of $E$ over $A$ is a complex $E_{A}\in\mathrm{D}^{\mathrm{b}}(Y_{A})$, where $Y_{A}=Y\times\mathrm{Spec}A$, such that the derived pullback of $E_{A}$ to the closed fiber $Y_{0}=Y\times\{0\}$ is $E$. In particular, if we take $A=\mathbb{C}[\varepsilon]/\varepsilon^{2}$, we call it a first order deformation of $E$. A first order deformation $E_{1}$ can be lifted to the second order if there exists a deformation $E_{2}$ over $\mathbb{C}[\varepsilon]/\varepsilon^{3}$ extending $E_{1}$ via the natural closed embedding.
\end{defn}
\begin{prop}
The first order deformations of $E$ are parametrized by 
\begin{equation*}
\mathrm{Ext}^{1}(E,E):=\mathrm{Hom}(E,E[1])
\end{equation*}. The first order deformations which can be lifted to the second order are parametrized by $\kappa_{2}^{-1}(0)\subseteq\mathrm{Ext}^{1}(E,E)$, where \begin{equation*}
\kappa_{2}:\mathrm{Ext}^{1}(E,E)\rightarrow\mathrm{Hom}(E,E[2])
\end{equation*} sends $\xi\in\mathrm{Ext}^{1}(E,E)$ to $\xi[1]\circ\xi$.
\end{prop}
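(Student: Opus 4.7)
The plan is to prove both statements by representing $E$ via a bounded complex $(E^{\bullet},d)$ of locally free $\CO_{Y}$-modules and tracking what happens when we perturb the differential. For any $A\in\mathrm{Art}$, a deformation of $E$ over $A$ is (up to quasi-isomorphism) the base-changed complex $E^{\bullet}\otimes_{\mathbb{C}}A$ equipped with an $A$-linear differential $d_{A}$ satisfying $d_{A}^{2}=0$ and $d_{A}\otimes_{A}\mathbb{C}=d$; two such differentials define isomorphic objects in $\mathrm{D}^{\mathrm{b}}(Y_{A})$ iff they are conjugate by an $A$-linear degree-zero automorphism of $E^{\bullet}\otimes_{\mathbb{C}}A$ reducing to the identity modulo $\mathfrak{m}_{A}$. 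Establishing this dictionary between the derived-categorical definition and the concrete cochain-level one is the first step.

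For the first statement, I would specialize to $A=\mathbb{C}[\varepsilon]/\varepsilon^{2}$ and write a deformed differential as $d+\varepsilon\delta$ with $\delta\in\mathrm{End}^{1}(E^{\bullet})$. The condition $d_{A}^{2}=0$ reduces to $d\circ\delta+\delta\circ d=0$, so $\delta$ is a $1$-cocycle in the endomorphism complex. A gauge transformation by $1+\varepsilon h$ with $h\in\mathrm{End}^{0}(E^{\bullet})$ replaces $\delta$ by $\delta+d\circ h-h\circ d$, so isomorphism classes of first-order deformations correspond bijectively to $H^{1}(\mathrm{End}^{\bullet}(E^{\bullet}))=\Ext^{1}(E,E)$.

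For the second statement, given a first-order deformation with cocycle $\delta_{1}$ representing $\xi\in\Ext^{1}(E,E)$, a lift to $\mathbb{C}[\varepsilon]/\varepsilon^{3}$ has the form $d+\varepsilon\delta_{1}+\varepsilon^{2}\delta_{2}$ with $\delta_{2}\in\mathrm{End}^{1}(E^{\bullet})$. The $\varepsilon^{2}$-coefficient of the equation $(d+\varepsilon\delta_{1}+\varepsilon^{2}\delta_{2})^{2}=0$ forces $d\circ\delta_{2}+\delta_{2}\circ d=-\delta_{1}\circ\delta_{1}$, so a lift exists precisely when $\delta_{1}\circ\delta_{1}$ is a coboundary in $\mathrm{End}^{\bullet}(E^{\bullet})$. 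The resulting obstruction class $[\delta_{1}\circ\delta_{1}]\in H^{2}(\mathrm{End}^{\bullet}(E^{\bullet}))=\Hom(E,E[2])$ is by construction the Yoneda product of $\xi$ with itself, which under the standard identification is $\xi[1]\circ\xi=\kappa_{2}(\xi)$.

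The main obstacle is the model-independence in the first step: verifying that every deformation $E_{A}\in\mathrm{D}^{\mathrm{b}}(Y_{A})$ of $E$ arises from some choice of perturbed differential on a common locally free model, and that the bijections above do not depend on the chosen resolution. This is standard but nontrivial; it rests on the fact that locally free resolutions of $E$ are unique up to homotopy equivalence together with flatness of $Y_{A}\to Y$, and I would invoke the classical treatment (for example Lieblich's work on moduli of complexes) rather than rederive it from scratch.
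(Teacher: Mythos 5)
Your proposal is correct, and it is worth noting that the paper does not actually supply an argument here: its ``proof'' consists of citing the sheaf case (e.g.\ \cite{DMCh06}) and the extension to complexes in \cite{Lie06}. What you have written is essentially the content hiding behind those citations, made explicit: represent $E$ by a bounded complex of locally free sheaves, identify deformations over $A$ with Maurer--Cartan elements $d_A^2=0$ in $\mathrm{End}^{1}(E^{\bullet})\otimes\mathfrak{m}_A$ modulo gauge, read off first-order deformations as $H^{1}$ of the endomorphism complex, and identify the obstruction $[\delta_1\circ\delta_1]$ to a second-order lift with the Yoneda square $\xi[1]\circ\xi$. Your computation of the $\varepsilon$- and $\varepsilon^{2}$-coefficients is the standard one and is correct, including the identification of the cochain-level composition with the Yoneda product in $\Hom(E,E[2])$. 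You are also right to isolate the genuine technical content --- that every object of $\mathrm{D}^{\mathrm{b}}(Y_A)$ deforming $E$ is realized by a perturbed differential on a fixed locally free model, independently of the resolution --- and to defer that to Lieblich rather than reprove it; this is exactly the point the paper outsources as well. The trade-off is transparency versus brevity: your route shows the reader the mechanism that makes $\kappa_2$ quadratic and explains why it is the self-composition, which is useful since the paper goes on to compute $\kappa_2$ explicitly in Proposition \ref{prop:soo}, whereas the paper's citation-only proof keeps the section short at the cost of leaving the formula for $\kappa_2$ unmotivated.
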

\begin{proof}
This is well-known for sheaves, for example, one can see \cite{DMCh06}. It is carried over to the case of complexes by \cite{Lie06}.
\end{proof}

Now assume that $E$ is a stable complex with respect to some stability condition $\sigma$ not lying on any wall inside the stability manifold (assume this is non-empty), we associate to $E$ its deformation functor \begin{equation*}
\mathrm{Def}_{E}: \mathrm{Art}\rightarrow \mathrm{Set}
\end{equation*}
by sending an Artin local ring $A$ to the set of all deformations of $E$ over $A$. In \cite{Lie06}, it is proved that this functor satisfies the first three conditions of Schlessinger's criterion, guaranteeing the existence of a hull for $\mathrm{Def}_{E}$. In our case the complex $E$ is stable, its automorphisms are just scalar multiplication by a non-zero constant, therefore they always extend via small thickenings. This proves the last condition of Schlessinger's condition, hence $\mathrm{Def}_{E}$ is prorepresentable. If a moduli space for $E$ exists, the complete local ring $R$ that prorepresents $\mathrm{Def}_{E}$ will become the completion of the local ring of the moduli space at $E$. This complete local ring $R$ can be computed explicitly via the so-called Kuranishi map, which we now describe. The Kuranishi map is a formal map
\begin{equation*}
\kappa=\kappa_{2}+\kappa_{3}+\dots :\mathrm{Ext}^{1}(E,E)\rightarrow\mathrm{Ext}^{2}(E,E)
\end{equation*}
defined inductively on order by using obstruction theory. An explicit construction can be found in the appendix A of \cite{LS06} in the case of sheaves, it applies to the case of complexes in the same way. The formal scheme $\kappa^{-1}(0)$ parametrizes all the versal deformations of $E$ and satisfies certain universal properties. As a result, this is the desired hull of our deformation functor $\mathrm{Def}_{E}$ and $\kappa^{-1}(0)=\mathrm{Spec}R$ as a formal scheme, for more details one can see chapter 3 of \cite{AS18} in the case of sheaves.

Going back to our situation, let $E$ be a complex in the image of $C$ under $\gamma$ and $R$ be the completion of the local ring of $M_{\tau}([\mathcal{O}_{x}])$ at $E$, we only need to show that $R$ is reduced. The strategy is the following: First, we will compute $\kappa_{2}$ with the help of destablizing sequences and Proposition 8.2, we will see that $\kappa_{2}^{-1}(0)$ is cut out by quadratic equations which are products of different linear forms, hence is a union of a two dimensional subspace and an $n-1$ dimensional subpaces; Then we argue that by the construction of the morphism $\gamma$, we should have two tangent spaces $T_{\mathbb{P}^{n-1},E}$ of dimension $n-1$ and $T_{X,E}$ of dimension two lying inside $\kappa^{-1}(0)$, which in particular lying inside $\kappa_{2}^{-1}(0)$. Hence there is no room for other possibilities, we must have that $\kappa^{-1}(0)=\kappa_{2}^{-1}(0)$ is cut out by quadratic equations which are products of different linear forms. This proves that $\kappa^{-1}(0)$ is reduced and therefore $R$ is reduced.

First we denote the arrows in the destablizing sequence (in the proof of Lemma 7.6) by \begin{equation*}
\oks\overset{a}{\longrightarrow}E\overset{b}{\longrightarrow}\okp\overset{\eta}{\longrightarrow}\mathcal{O}_{C}(k)[2].
\end{equation*}
By writing down long exact sequence for Hom functor, we will have the following commutative diagrams (for simplicity we denote $A$ to be $\oks$ and $B$ to be $\okp$).
\begin{lem}\label{lem:erc}
The following diagram has exact rows and columns except at $\mathrm{Ext}^{1}(B,A)$ where we have a common one-dimensional kernel $\mathbb{C}\eta$:
\medskip
\begin{center}
$\begin{CD}
\mathrm{Ext}^{1}(B,A)=\mathbb{C}^{n} @>>> \mathrm{Ext}^{1}(E,A)=\mathbb{C}^{n-1} @>>> 0\\
@VVV @VVV @VVV\\
\mathrm{Ext}^{1}(B,E)=\mathbb{C}^{n-1} @>>> \mathrm{Ext}^{1}(E,E)=\mathbb{C}^{n} @>>> \mathrm{Ext}^{1}(A,E)=\mathbb{C}\\
@VVV @VVV @VVV\\
0 @>>> \mathrm{Ext}^{1}(E,B)=\mathbb{C} @>>> \mathrm{Ext}^{1}(A,B)=\mathbb{C}^{2}\\
@VVV @VVV @VVV\\
0 @>>> \mathrm{Ext}^{2}(E,A)=\mathbb{C}^{n-1} @>>> \mathrm{Ext}^{2}(A,A)=\mathbb{C}^{n-1}\\
@VVV @VVV @VVV\\
\mathrm{Ext}^{2}(A,E)=\mathbb{C}^{n-1} @>>> \mathrm{Ext}^{2}(E,E)=\mathbb{C}^{2n-3} @>>> \mathrm{Ext}^{2}(A,E)=\mathbb{C}^{n-2}\\
@VVV @VVV @VVV\\
\mathrm{Ext}^{2}(B,B)=\mathbb{C}^{n-1} @>>> \mathrm{Ext}^{2}(E,B)=\mathbb{C}^{n-2} @>>> 0
\end{CD}$
\end{center}
\medskip
\end{lem}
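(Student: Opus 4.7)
The plan is to compute every entry by applying the six long exact sequences obtained from the destabilizing triangle $A\xrightarrow{a}E\xrightarrow{b}B\xrightarrow{\eta}A[1]$, starting from the corner Ext groups between $A$ and $B$. From the formulas derived in the proof of Lemma~\ref{lem:dechoms}, one computes $\Hom(A,A)\cong\Hom(B,B)\cong\BC$, $\Ext^{1}(A,A)=\Ext^{1}(B,B)=0$, $\Ext^{2}(A,A)\cong\Ext^{2}(B,B)\cong\BC^{n-1}$, $\Hom(A,B)=\Hom(B,A)=0$, $\Ext^{1}(A,B)\cong\BC^{2}$, $\Ext^{1}(B,A)\cong\BC^{n}$, and $\Ext^{2}(A,B)=\Ext^{2}(B,A)=0$.

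I would then identify the three columns of the diagram as the degree $1$ and $2$ portions of the long exact sequences $\Hom(B,-)$, $\Hom(E,-)$, $\Hom(A,-)$ applied to $A\to E\to B$, and the three rows as those of $\Hom(-,A)$, $\Hom(-,E)$, $\Hom(-,B)$; exactness of every row and column is then automatic. The connecting morphisms $\Hom(B,B)\to\Ext^{1}(B,A)$ and $\Hom(A,A)\to\Ext^{1}(B,A)$ are both given by composition with $\eta$, sending $\mathrm{id}_{B}$ and $\mathrm{id}_{A}$ respectively to $\eta$. Since $E$ is a non-split extension, $\eta\neq 0$, and both maps are injective with common image $\BC\eta$. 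Reading off $\Hom(B,-)$ and $\Hom(-,A)$ then forces $\Hom(B,E)=\Hom(E,A)=0$, $\Ext^{1}(B,E)\cong\Ext^{1}(E,A)\cong\BC^{n-1}$, and $\Ext^{2}(B,E)\cong\Ext^{2}(E,A)\cong\BC^{n-1}$.

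For the remaining off-diagonal entries, the long exact sequences $\Hom(A,-)$ and $\Hom(-,B)$, together with $\Hom(A,E)\cong\Hom(E,B)\cong\BC$ (generated by $a$ and $b$), reduce everything to the kernel and cokernel of the composition maps $\Ext^{1}(A,B)\to\Ext^{2}(A,A)$, $\xi\mapsto\eta[1]\circ\xi$, and $\Ext^{1}(A,B)\to\Ext^{2}(B,B)$, $\xi\mapsto\xi[1]\circ\eta$. The main obstacle is to show both of these maps have one-dimensional kernel, which would give $\Ext^{1}(E,B)\cong\Ext^{1}(A,E)\cong\BC$ and $\Ext^{2}(E,B)\cong\Ext^{2}(A,E)\cong\BC^{n-2}$. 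Equivalently, this is the jump $\Ext^{1}(E,E)\cong\BC^{n}$ (rather than the generic $\BC^{n-1}$), which is precisely the content of Lemma~\ref{lem:tan} applied to points of the embedded curve $C\subset\BP^{n-1}$. Since $E$ lies in the image of $C$ under $\gamma$ by hypothesis, I may quote Lemma~\ref{lem:tan} directly.

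Finally, feeding these values into the $\Hom(E,-)$ long exact sequence together with $\Hom(E,E)\cong\BC$ (from stability of $E$) and $\Ext^{1}(E,E)\cong\BC^{n}$ forces $\Ext^{2}(E,E)\cong\BC^{2n-3}$. The common kernel claim at $\Ext^{1}(B,A)$ then drops out: in the column, the kernel of $\Ext^{1}(B,A)\to\Ext^{1}(B,E)$ equals the image of $\Hom(B,B)\to\Ext^{1}(B,A)$, namely $\BC\eta$; in the row, the kernel of $\Ext^{1}(B,A)\to\Ext^{1}(E,A)$ equals the image of $\Hom(A,A)\to\Ext^{1}(B,A)$, again $\BC\eta$. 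Hence both kernels coincide with $\BC\eta$, as asserted.
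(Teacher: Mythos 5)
Your proposal is correct and follows exactly the route the paper intends: the paper offers no written proof of Lemma \ref{lem:erc} beyond the remark that it comes from the long exact sequences of the $\Hom$ functor, and your computation of the corner groups, the identification of the connecting maps into $\Ext^1(B,A)$ with $\pm\eta$, and the reduction of the remaining entries to the ranks of the two composition maps $\Ext^1(A,B)\to\Ext^2(A,A)$ and $\Ext^1(A,B)\to\Ext^2(B,B)$ fill in precisely the intended details. Quoting Lemma \ref{lem:tan} for the one non-formal input (that these maps have one-dimensional kernel because $E$ lies over the curve $C\subset\BP^{n-1}$) is legitimate and is the same input the paper relies on; note only that the first entry of the fifth row of the displayed diagram should read $\Ext^2(B,E)=\BC^{n-1}$ rather than $\Ext^2(A,E)$, consistent with your computation.
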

We can see the homomorphism $\theta:\mathrm{Ext}^{1}(E,E)\longrightarrow\mathrm{Ext}^{1}(A,B)$ sending an extension $\xi$ to $b[1]\circ\xi\circ a$ has an one-dimensional image in $\mathrm{Ext}^{1}(A,B)$. We can decompose $\mathrm{Ext}^{1}(E,E)=\mathrm{ker}(\theta)\oplus\mathbb{C}u$, where $u\in\mathrm{Ext}^{1}(E,E)$ satisfies $b[1]\circ u\circ a\neq0$. Notice that the homomorphism $\phi:\mathrm{Ext}^{1}(B,A)\longrightarrow\mathrm{Ext}^{1}(E,E)$ sending an arrow $\okp\overset{c}{\longrightarrow}\okss$ to $a[1]\circ c\circ b$ factors through $\mathrm{ker}(\theta)$. For dimension reason, we must have $\mathrm{im}(\phi)=\mathrm{ker}(\theta)$ and they both equal the tangent space of $\mathbb{P}^{n-1}$ at $E$. Since $E$ lies on $C$, we have a further decomposition $\mathrm{ker}(\theta)=N_{C/\mathbb{P}^{n-1},E}\oplus T_{C,E}$. Assume $N_{C/\mathbb{P}^{n-1},E}$ is generated by $\{v_{i}|i=1,2,\cdots,n-2\}$ and $T_{C,E}$ is generated by $w$. To summarise, given any $\xi\in\mathrm{Ext}^{1}(E,E)$, we can write it as
\begin{equation*}
\xi=au+bw+\Sigma_{i=1}^{n-2}c_{i}v_{i}
\end{equation*}
where $a$, $b$ and $c_{i}$ are coefficients. The next proposition computes $\kappa_{2}$ explicitly with respect to the bases chosen above.
\begin{prop}\label{prop:soo}
The second order obstruction map is computed by
\begin{equation*}
\kappa_{2}(\xi)=\sum_{i=1}^{n-2}ac_{i}(u[1]\circ v_{i}+v_{i}[1]\circ u),
\end{equation*}
and $\{u[1]\circ v_{i}+v_{i}[1]\circ u|i=1,2,\cdots,n-2\}$ are linearly independent in $\mathrm{Ext}^{2}(E,E)$
\end{prop}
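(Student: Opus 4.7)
The plan has three steps: expand $\kappa_2(\xi) = \xi[1]\circ\xi$ in the chosen basis, use smooth classifying maps from $X$ and $\BP^{n-1}$ to kill most of the quadratic cross-terms, and then reduce the linear independence claim to a Yoneda product computation on $C \cong \BP^1$.

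For the vanishing step, the universal family $\CO_S$ on $X \times X$ and the universal extension $\CE$ on $\BP^{n-1} \times X$ induce classifying morphisms $X \to M_{\tau}([\CO_x])$ and $\BP^{n-1} \to M_{\tau}([\CO_x])$ whose differentials at $E$ are, by construction, the inclusions of $T_{X,E} = \mathrm{Span}(u,w)$ and $T_{\BP^{n-1},E} = \ker\theta = \mathrm{Span}(w, v_1,\dots,v_{n-2})$ into $\Ext^1(E,E)$. Since both source varieties are smooth at the image of $E$, every tangent vector in either subspace lifts to a genuine deformation, so $\kappa_2$ vanishes identically on each tangent subspace:
\begin{equation*}
\kappa_2(au + bw) = 0, \qquad \kappa_2\Bigl(bw + \sum_i c_i v_i\Bigr) = 0 \quad \text{for all } a,b,c_i \in \BC.
\end{equation*}
Polarising these identities in each variable yields $u[1]\circ u = w[1]\circ w = v_i[1]\circ v_i = 0$ together with $u[1]\circ w + w[1]\circ u = v_i[1]\circ v_j + v_j[1]\circ v_i = w[1]\circ v_i + v_i[1]\circ w = 0$. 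Substituting these cancellations into the direct bilinear expansion of $\xi[1]\circ\xi$ leaves exactly the claimed formula $\kappa_2(\xi) = \sum_{i=1}^{n-2} ac_i(u[1]\circ v_i + v_i[1]\circ u)$.

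For the linear independence of $\omega_i := u[1]\circ v_i + v_i[1]\circ u$, write $v_i = a[1]\circ c_i \circ b$ for lifts $c_i \in \Ext^1(B,A)$ of $v_i$ under $\phi$. The identity $b \circ a = 0$ forces $v_i \circ a = 0$, so
\begin{equation*}
\omega_i \circ a = v_i[1]\circ u \circ a = a[2]\circ c_i[1]\circ \tau, \qquad \tau := \theta(u) \in \Ext^1(A,B)\smallsetminus\{0\}.
\end{equation*}
Under the identifications $\Ext^1(B,A) \cong H^1(C,\CO_C(-n-1))$, $\Ext^1(A,B) \cong H^0(C,\CO_C(1))$ and $\Ext^2(A,A) \cong H^1(C,\CO_C(-n))$, the Yoneda product $c \mapsto c[1]\circ\tau$ becomes multiplication by the section $\tau$, which by Serre duality on $\BP^1$ is surjective with $1$-dimensional kernel. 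The identity $\eta[1]\circ\tau = (\eta \circ b)[1]\circ u \circ a = 0$ identifies this kernel as $\BC\eta$, and since $c_1,\dots,c_{n-2}$ are linearly independent in $\Ext^1(B,A)/\BC\eta$, the classes $c_i[1]\circ\tau$ are linearly independent in $\Ext^2(A,A)$.

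To conclude, the long exact sequence obtained by applying $\Hom(A,-)$ to the destabilising triangle $A \to E \to B$, combined with $\Ext^2(A,B) = 0$, shows $\ker(a_* : \Ext^2(A,A) \to \Ext^2(A,E))$ is the $1$-dimensional image of the connecting map $\eta[1]\circ - : \Ext^1(A,B) \to \Ext^2(A,A)$. The $\omega_i$ are independent provided this kernel is transverse to $\mathrm{Span}(c_i[1]\circ\tau)$. Choosing a lift $c_w \in \Ext^1(B,A)$ of $w \in T_{C,E}$, the classes $\{c_1[1]\circ\tau, \dots, c_{n-2}[1]\circ\tau, c_w[1]\circ\tau\}$ form a basis of $\Ext^2(A,A)$, and one must verify that the generator $\eta\cdot\beta_0$ of $\ker(a_*)$ has a nonzero $c_w[1]\circ\tau$-component. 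This transversality is the main technical obstacle and is forced by the rational normal embedding $C \hookrightarrow \BP^{n-1}$ of degree $n-1$ from Lemma \ref{lem:degree}: the infinitesimal variation of $\eta$ as $E$ moves along $C$ encodes the tangent direction $w$, producing the required $c_w[1]\circ\tau$-contribution. The earlier steps (symbolic expansion and smoothness-induced vanishing) are formal; the heart of the proof is pinning down this last geometric identification, which propagates linear independence from $\Ext^2(A,A)$ through $a_*$ to $\Ext^2(A,E)$ and hence into $\Ext^2(E,E)$.
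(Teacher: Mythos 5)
Your derivation of the formula for $\kappa_2$ is the paper's argument: both rest on the fact that every vector in $T_{X,E}$ and in $T_{\BP^{n-1},E}=\ker(\theta)$ lifts to an honest deformation (coming from the family $\CO_S$ and from the universal extension), so $\kappa_2$ vanishes identically on each of these two subspaces, and polarisation kills every quadratic monomial except $ac_i(u[1]\circ v_i+v_i[1]\circ u)$. Your reduction of the independence claim is also essentially the paper's reduction: compose with $a$ on the right, use $v_i\circ a=0$ to get $\omega_i\circ a=a[2]\circ c_i[1]\circ\tau$ with $\tau=b[1]\circ u\circ a\neq 0$, and try to prove independence of these classes in $\Ext^2(A,E)$. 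Your intermediate step --- realising $-[1]\circ\tau$ as multiplication by a section of $\CO_C(1)$ from $H^1(C,\CO_C(-n-1))$ to $H^1(C,\CO_C(-n))$ and identifying its one-dimensional kernel with $\BC\eta$ via $\eta[1]\circ\tau=(\eta\circ b)[1]\circ u\circ a=0$ --- is a more explicit substitute for the paper's dimension counts and does correctly give that the $c_i[1]\circ\tau$ are independent in $\Ext^2(A,A)$.

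The gap is exactly where you locate it, and you do not close it. The map $a[2]\circ-\colon\Ext^2(A,A)\to\Ext^2(A,E)$ has a one-dimensional kernel, the image of $\eta[1]\circ-\colon\Ext^1(A,B)\to\Ext^2(A,A)$, and the whole content of the independence statement is that this kernel does not meet the $(n-2)$-dimensional span of the $c_i[1]\circ\tau$; if it did, a relation among the $\omega_i\circ a$, and hence potentially among the $\omega_i$, would survive. Saying the transversality ``is forced by the rational normal embedding'' and that ``the infinitesimal variation of $\eta$ along $C$ encodes $w$'' is a heuristic, not an argument: you never compute the generator of $\ker(a[2]\circ-)$ in your basis $\{c_1[1]\circ\tau,\dots,c_{n-2}[1]\circ\tau,c_w[1]\circ\tau\}$, and Lemma \ref{lem:degree} is invoked but not actually used. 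The paper closes precisely this point with the second commutative $\Ext$-diagram, built from the triangle $B\to\CO_x\to A\xrightarrow{e}B[1]$: it identifies $T_{\BP^{n-1},E}\cong\Ext^1(B,A)/\BC\eta$ with $\Ext^2(A,A)$ via $f\mapsto f[1]\circ e$, and then identifies $N_{C/\BP^{n-1},E}$ with the cokernel of $\eta[1]\circ-$, that is with $\Ext^2(A,E)$, so that the one-dimensional kernel of $a[2]\circ(-[1]\circ e)$ is exactly the tangent direction $T_{C,E}$ and misses the normal directions $v_i$. You would need to reproduce this identification, or give an equivalent explicit computation of $\ker(a[2]\circ-)$ in your chosen basis, before your last step is valid.
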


We need one more lemma to prove the proposition. Since $E$ lies on $C$ which is contained in $X$, it will correspond to a point $x$ in $X$. We denote the arrows by
\begin{equation*}
B\overset{c}{\longrightarrow} \mathcal{O}_{x}\overset{d}{\longrightarrow} A \overset{e}{\longrightarrow} B[1].
\end{equation*}
By writing down long exact sequence for Hom functor, we will have the following commutative diagrams.
\begin{lem}
The following diagram is coming from the long exact sequences of $\mathrm{Hom}$ functor in two directions of the above extension. It is commutative, exact and all boundary homomorphisms are zero except at $\mathrm{Ext}^{1}(A,B)$, where we have a common one-dimensional kernel $\mathbb{C}e$.
\end{lem}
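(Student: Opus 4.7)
The plan is to recognize the unshown diagram as the commutative grid obtained from applying the bifunctor $\mathrm{RHom}(-,-)$ to the distinguished triangle $B \xrightarrow{c} \mathcal{O}_x \xrightarrow{d} A \xrightarrow{e} B[1]$ in each of its two slots. Applying $\mathrm{Hom}(-,?)$ for $? \in \{A, \mathcal{O}_x, B\}$ produces three long exact sequences, and applying $\mathrm{Hom}(?,-)$ for the same three objects produces three further long exact sequences. These are the ``two directions'' referred to in the statement. Exactness of every row and column is then simply the long exact $\mathrm{Hom}$-sequence of a triangle, and commutativity of every square in the grid is a formal naturality statement for $\mathrm{RHom}$ applied to the morphisms $c$, $d$, and $e$ (this part is essentially identical to the corresponding part of Lemma \ref{lem:erc}).

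The substantive content is then the computation of the individual $\mathrm{Ext}$-groups and the claim about which boundary maps vanish. I would first assemble a table of all Ext groups among $A = \mathcal{O}_C(k)[1]$, $B = \mathcal{O}_C(k+1)$, and $\mathcal{O}_x$ for $x \in C$, using the short exact sequence $0 \to \mathcal{O}_X(-C)(j) \to \mathcal{O}_X(j) \to \mathcal{O}_C(j) \to 0$ together with the Koszul resolution of $\mathcal{O}_x$, Serre duality on $X$, and the numerical data $C \cong \mathbb{P}^1$, $C^2 = -n$. Most of these groups have already been computed (or are routine variants of groups computed) in Lemma \ref{lem:dechoms}, Lemma \ref{lem:exts}, and Lemma \ref{lem:erc}, so the work reduces mostly to bookkeeping.

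With the Ext table in hand, the claim that all boundary maps vanish outside position $\mathrm{Ext}^1(A,B)$ will follow from observing that at every other node, either the source or the target of the boundary map is zero in the relevant degree (again, a direct dimension count). At the exceptional node $\mathrm{Ext}^1(A,B)$, the two boundary maps in question are: (i) the connecting map $\mathrm{Hom}(B,B) \to \mathrm{Ext}^1(A,B)$ coming from $\mathrm{Hom}(-,B)$ applied to the triangle, which sends $\mathrm{id}_B$ to the class $e$, and (ii) the connecting map $\mathrm{Hom}(A,A) \to \mathrm{Ext}^1(A,B)$ coming from $\mathrm{Hom}(A,-)$ applied to the triangle, which likewise sends $\mathrm{id}_A$ to $e$ (up to a sign). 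Since $\mathrm{Hom}(A,A) \cong \mathrm{Hom}(B,B) \cong \mathbb{C}$ (by simplicity of $A$ and $B$ in $\CB_{H,k}^{-\Imag(z)}$, proved in Lemma \ref{lem:jh}), each boundary map is an isomorphism onto $\mathbb{C}e$, and so the next outgoing map from $\mathrm{Ext}^1(A,B)$ in either direction has kernel precisely the one-dimensional subspace $\mathbb{C}e$, producing the ``common one-dimensional kernel'' in the sense claimed.

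The main obstacle will not be any single computation but rather the careful tracking of signs and identifications at the node $\mathrm{Ext}^1(A,B)$ — one must verify that the two independently-constructed boundary maps both land on the \emph{same} line $\mathbb{C}e$ and not on two distinct lines. This compatibility is precisely the statement that the Yoneda product of $e$ with $\mathrm{id}_A$ equals the Yoneda product of $\mathrm{id}_B$ with $e$, which is the associativity of the Yoneda product and can be checked directly either with injective resolutions or, more concretely for our $A$ and $B$, by running the computation on an explicit two-term locally free resolution of $\CO_C(j)$ on $X$ analogous to the one used in the proof of Lemma \ref{lem:degree}.
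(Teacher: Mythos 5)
Your skeleton is the right one, and it matches what the paper leaves implicit (the paper states this lemma without written proof, treating it as routine bookkeeping): six long exact sequences from applying $\Hom$ in each slot of the triangle $B \xrightarrow{c} \CO_x \xrightarrow{d} A \xrightarrow{e} B[1]$, commutativity by functoriality of composition, an Ext table assembled from the resolution $0 \to \CO_X(-C)(j) \to \CO_X(j) \to \CO_C(j) \to 0$ and Serre duality, and the two connecting maps $\Hom(A,A) \to \Ext^1(A,B)$ and $\Hom(B,B) \to \Ext^1(A,B)$ sending the identities to $e$. One remark: the "main obstacle" you flag at the end is a non-issue. Both connecting maps are by definition composition with $e$ evaluated at an identity morphism, so they land on the \emph{same} line $\BC e$ tautologically; no Yoneda-associativity check or explicit resolution is needed. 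What you do need, and omit, is $e \neq 0$: if $e=0$ the triangle splits and $\CO_x \cong \CO_C(k+1) \oplus \CO_C(k)[1]$, impossible since $\CO_x$ is a sheaf.

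The genuine gap is your claimed mechanism for every other boundary map: "either the source or the target is zero \ldots a direct dimension count." This fails at several nodes, and at two of them the vanishing conclusion itself is false. First, $\partial\colon \Hom(\CO_x,A) \to \Ext^1(\CO_x,B)$ is a map $\BC \to \BC$ with both sides nonzero; it vanishes only because $\Hom(\CO_x,A)$ is spanned by $d$ and $e \circ d = 0$, a triangle identity, not a dimension count. Similarly $\Ext^1(\CO_x,A) \to \Ext^2(\CO_x,B)$ and $\Ext^1(B,\CO_x) \to \Ext^2(A,\CO_x)$ are maps between one-dimensional spaces, and their vanishing requires Serre duality to convert them into pairings against $\Hom(B,\CO_x)=\BC\, c$, resp.\ $\Hom(\CO_x,A)=\BC\, d$, followed by the identities $c[1]\circ e = 0$ and $e \circ d = 0$. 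Worse, two connecting maps are genuinely nonzero: $\Ext^1(B,A) \to \Ext^2(B,B)$ (the last arrow of the third row) and $\Ext^1(B,A) \to \Ext^2(A,A)$ (the last arrow of the third column). Each is a surjection $\BC^n \to \BC^{n-1}$ whose kernel is the one-dimensional image of the preceding map; if either were zero, as your blanket claim asserts, exactness would force $\Ext^1(B,\CO_x)=\BC$ (resp.\ $\Ext^1(\CO_x,A)=\BC$) to surject onto $\BC^n$, impossible for $n \geq 2$. Indeed the paper relies on exactly this non-vanishing: in the proof of Proposition \ref{prop:soo}, $T_{\BP^{n-1},E}$ is identified with the cokernel of $d[1]\circ - \circ c$, "which is naturally $\Ext^2(A,A)$ or $\Ext^2(B,B)$," i.e.\ via these two nonzero boundaries. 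So as planned your argument would fail to establish the true vanishings and would "establish" two false ones; the repair is to compute each connecting map as composition with $e$, exploit the one-dimensionality of the Hom spaces spanned by the triangle morphisms $c$ and $d$ (with Serre duality in degree two), and restrict the vanishing claim so as to exclude the two displayed boundaries out of $\Ext^1(B,A)$.
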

\begin{center}
$\begin{CD}
\mathrm{Ext}^{1}(A,B)=\mathbb{C}^{2} @>>> \mathrm{Ext}^{1}(A,\mathcal{O}_{x})=\mathbb{C} @>>> 0 @>>>0\\
@VVV @VVV @VVV @VVV\\
\mathrm{Ext}^{1}(\mathcal{O}_{x},B)=\mathbb{C} @>>> \mathrm{Ext}^{1}(\mathcal{O}_{x},\mathcal{O}_{x})=\mathbb{C}^{2} @>>> \mathrm{Ext}^{1}(\mathcal{O}_{x},A)=\mathbb{C} @>>> 0\\
@VVV @VVV @VVV @VVV\\
0 @>>> \mathrm{Ext}^{1}(B,\mathcal{O}_{x})=\mathbb{C} @>>> \mathrm{Ext}^{1}(B,A)=\mathbb{C}^{n} @>>> \mathrm{Ext}^{2}(B,B)=\mathbb{C}^{n-1}\\
@VVV @VVV @VVV @VVV\\
0 @>>> 0 @>>> \mathrm{Ext}^{2}(A,A)=\mathbb{C}^{n-1} @>>> 0
\end{CD}$
\end{center}

\bigskip

The proof of Proposition \ref{prop:soo} is the following:
\begin{proof}
Since $u$, $w$, $v_{i}$, $u+w$ and $w+v_{i}$ are coming from the tangent spaces $T_{\mathbb{P}^{n-1},E}$ and $T_{X,E}$, they are versal deformations and in particular can be lifted to the second order. By Proposition 2, we must have $\kappa_{2}(u)=u[1]\circ u=0$ and similar equation for the rest elements as well. Then it is a straightforward computation that $\kappa_{2}(\xi)=\sum_{i=1}^{n-2}ac_{i}(u[1]\circ v_{i}+v_{i}[1]\circ u)$ by using these equations.

It only remains to show that $\{u[1]\circ v_{i}+v_{i}[1]\circ u|i=1,2,\dots,n-2\}$ are linearly independent. Suppose not, then we must have a nonzero linear relation $\sum_{i=1}^{n-2}p_{i}(u[1]\circ v_{i}+v_{i}[1]\circ u)=0$. We can rewrite it as $u[1]\circ v+ v[1]\circ u=0$, where $v=\sum_{i=1}^{n}p_{i}v_{i}$ is some nonzero element in $N_{C/\mathbb{P}^{n-1},E}$. Since $N_{C/\mathbb{P}^{n-1},E}\subseteq\mathrm{ker}(\theta)=\mathrm{im}(\phi)=T_{\mathbb{P}^{n-1},E}$, we can write $v=a[1]\circ f\circ b$ for some $f\in\mathrm{Ext}^{1}(B,A)$. It is not very hard to see from the diagram in Lemma $4$ that $T_{\mathbb{P}^{n-1},E}$ can be identified with the cokernel of $d[1]\circ-\circ c:\mathrm{Ext}^{1}(\mathcal{O}_{x},\mathcal{O}_{x})\longrightarrow\mathrm{Ext}^{1}(B,A)$, which is naturally $\mathrm{Ext}^{2}(A,A)$ or $\mathrm{Ext}^{2}(B,B)$, hence $f[1]\circ e\neq 0$ in $\mathrm{Ext}^{2}(A,A)$. Moreover, from the diagram in Lemma $1$,$N_{C/\mathbb{P^{n-1}},E}$ can be identified with the cokernel of $\eta[1]\circ-:\mathrm{Ext}^{1}(A,B)\longrightarrow\mathrm{Ext}^{2}(A,A)$, which is naturally $\mathrm{Ext}^{2}(A,E)$, hence $a[2]\circ f\circ e\neq0$. On the other hand, we have
\begin{align*}
0=&(u[1]\circ v+ v[1]\circ u)\circ a\\
=&u[1]\circ a[1]\circ f\circ (b\circ a)+ a[2]\circ f[1]\circ (b[1]\circ u\circ a)\\
=&a[2]\circ f[1]\circ e,
\end{align*}
which is a contradiction. Hence $\{u[1]\circ v_{i}+v_{i}[1]\circ u|i=1,2,\dots,n-2\}$ are linearly independent.
\end{proof}

To summarise, Proposition \ref{prop:soo} tells us that $\kappa_{2}^{-1}(0)$ is cut out by equations $ac_{i}=0$, $i=1,\cdots,n-2$ with respect to the bases $u[1]\circ v_{i}+v_{i}[1]\circ u$, $i=1,\cdots,n-2$. From the discussion we had before Lemma \ref{lem:erc}, this is enough to conclude that $M_{\tau}([\mathcal{O}_{x}])$ is reduced.

\begin{thm}
	\label{thm:mod}
	The map $\gamma$ induces an isomorphism $X \sqcup_C \BP^{n-1} \rightarrow M_{\tau}([\CO_x])$, where $C$ is embedded in $\BP^{n-1}$ as a rational normal curve. 
\end{thm}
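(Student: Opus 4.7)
The plan is to combine three ingredients that are already in place: Proposition \ref{prop:bijection} gives a set-theoretic bijection $\gamma$, Lemma \ref{lem:tan} shows that $d\gamma$ is an isomorphism of Zariski tangent spaces at every point, and the deformation-theoretic analysis of Section \ref{sec:reducedness} shows that $M_{\tau}([\CO_x])$ is reduced. With these in hand I will run the argument of \cite[Corollary 14.10]{Harris}, which in its classical form requires smoothness of the source but, as explained in the remark preceding Lemma \ref{lem:injred}, in fact only needs reducedness of the target to go through.

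Away from the curve $C$ the source $X\sqcup_C\BP^{n-1}$ is smooth (it is locally $X\setminus C$ or $\BP^{n-1}\setminus C$), so the standard form of \cite[Corollary 14.10]{Harris} applies directly: the bijection on points plus the isomorphism on tangent spaces forces $\gamma$ to be a local isomorphism there. The only remaining work is at points of $C$. There I would work in affine neighbourhoods $V\subset X\sqcup_C\BP^{n-1}$ of $c\in C$ and $U\subset M_{\tau}([\CO_x])$ of $\gamma(c)$ so that the restriction $\gamma\colon V\to U$ is surjective on points. Lemma \ref{lem:injred} then yields injectivity of the pulled-back ring homomorphism $\gamma^*\colon\mathcal{O}(U)\to\mathcal{O}(V)$, precisely because $U$ is reduced. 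The tangent space isomorphism from Lemma \ref{lem:tan} translates, via the usual $(\mathfrak{m}/\mathfrak{m}^2)^{\vee}\cong T$ duality, into surjectivity of $\gamma^*$ modulo $\mathfrak{m}^2$; a standard graded Nakayama argument on the completions then promotes this to surjectivity of the induced map on completed local rings. Combined with injectivity, this gives an isomorphism on completed local rings, hence locally on a Zariski neighbourhood, and the local isomorphisms glue to the desired global isomorphism.

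The main obstacle is precisely the reducibility and singularity of $X\sqcup_C\BP^{n-1}$ along $C$, which prevents any direct appeal to Harris's statement; this is what forces the detour through Lemma \ref{lem:injred} and the explicit reducedness computation of Section \ref{sec:reducedness}. In particular the dimension jump of $\Ext^1(E,E)$ from $n-1$ to $n$ along $C$ recorded in Lemma \ref{lem:tan} is exactly matched by the tangent spaces on the source, which is what makes the Kuranishi computation $\kappa^{-1}(0)=\kappa_2^{-1}(0)$ force reducedness and cut out the two expected components transversely. The final identification of the image of $C$ in $\BP^{n-1}$ as a rational normal curve is already handled by Lemma \ref{lem:degree}: the map $C\to\BP^{n-1}$ induced by $\CO_S$ has degree $n-1$, and a smooth rational curve of degree $n-1$ in $\BP^{n-1}$ is by definition a rational normal curve.
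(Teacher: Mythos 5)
Your proposal is correct and follows essentially the same route as the paper: the paper's own proof is a one-line combination of Proposition \ref{prop:bijection}, Lemma \ref{lem:tan}, the reducedness of $M_{\tau}([\CO_x])$ established in Section \ref{sec:reducedness}, Lemma \ref{lem:injred}, and the argument of \cite[Corollary 14.10]{Harris}, which is exactly the assembly you describe. You additionally spell out the completed-local-ring/Nakayama step and the identification of $C$ as a rational normal curve via Lemma \ref{lem:degree} (where one should also note the embedding is nondegenerate, being given by the complete linear system $|\CO_C(n-1)|$), both of which the paper leaves implicit.
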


\begin{proof}
By the work of the previous section, $M_{\tau}([\CO_x])$ is reduced. Therefore we may apply Lemma \ref{lem:injred} and \cite[Corollarly 14.10]{Harris} to see that  Proposition \ref{prop:bijection} and Lemma \ref{lem:tan} show that $\gamma$ induces an isomorphism. 	
\end{proof}

\bibliographystyle{alpha}
\bibliography{research}

\end{document}